\newtheorem{theorem}{Theorem}[section]
\newtheorem{lemma}[theorem]{Lemma}
\newtheorem{proposition}{Proposition}
\newtheorem{corollary}{Corollary}
\theoremstyle{definition}
\theoremstyle{remark}
\newtheorem{remark}[theorem]{Remark}
\numberwithin{equation}{section}
\newcommand{\rM}{\mathcal{M}}
\newcommand{\cE}{\mathcal{E}}
\newcommand{\frL}{\mathfrak{L}}
\newcommand{\frl}{\mathfrak{l}}
\newcommand{\frc}{\mathfrak{c}}
\newcommand{\Yperp}{Y_{\perp}}
\newcommand{\R}{\mathbb{R}}
\newcommand{\sfg}{\mathsf{g}}
\newcommand{\bY}{Y}
\newcommand{\St}[2]{\mathrm{St}_{#1, #2}}
\newcommand{\Ric}{\textsc{Ric}}
\newcommand{\Scl}{\textsc{Scl}}
\newcommand{\cK}{\mathcal{K}}
\newcommand{\oo}{\mathfrak{o}}
\newcommand{\Herm}[1]{\mathrm{Sym}_{#1}}
\newcommand{\rmq}{\mathrm{q}}
\newcommand{\ttG}{\mathtt{G}}
\newcommand{\ttA}{\mathtt{A}}
\newcommand{\ttK}{\mathtt{K}}
\newcommand{\ttM}{\mathtt{M}}
\newcommand{\ttX}{\mathtt{X}}
\newcommand{\ttY}{\mathtt{Y}}
\newcommand{\ttZ}{\mathtt{Z}}
\newcommand{\ttV}{\mathtt{V}}
\newcommand{\frg}{\mathfrak{g}}
\newcommand{\cL}{\mathcal{L}}
\newcommand{\fra}{\mathfrak{a}}
\newcommand{\frh}{\mathfrak{h}}
\newcommand{\frm}{\mathfrak{m}}
\newcommand{\frk}{\mathfrak{k}}
\newcommand{\frb}{\mathfrak{b}}
\newcommand{\frn}{\mathfrak{n}}
\newcommand{\frv}{\mathfrak{v}}
\newcommand{\frd}{\mathfrak{d}}
\DeclareMathOperator{\lbb}{[\![}
 \DeclareMathOperator{\rbb}{]\!]}
\DeclareMathOperator{\Tr}{Tr}
\DeclareMathOperator{\dI}{I}
\DeclareMathOperator{\ft}{\mathsf{T}}
\DeclareMathOperator{\rD}{D}
\DeclareMathOperator{\rR}{R}
\DeclareMathOperator{\rP}{P}
\DeclareMathOperator{\RcM}{R^{\rM}}
\DeclareMathOperator{\hcK}{\hat{\cK}}
\DeclareMathOperator{\SOO}{SO}
\DeclareMathOperator{\ad}{ad}
\DeclareMathOperator{\addg}{ad^{\dagger}}
\begin{document}
\title[Stiefel Curvatures]{Curvatures of Stiefel manifolds with deformation metrics}
\author{Du Nguyen}

\email{nguyendu@post.harvard.edu}
\subjclass[2010]{Primary 65K10, 58C05, 49Q12, 53C25, 57Z20, 57Z25, 68T05}
\keywords{Optimization, Riemannian geometry, Riemannian curvature, Einstein manifold, Stiefel, Jacobi field, Machine Learning}

\begin{abstract}We compute curvatures of a family of tractable metrics on Stiefel manifolds, introduced recently by H{\"u}per, Markina and Silva Leite, which includes the well-known embedded and canonical metrics on Stiefel manifolds as special cases. The metrics could be identified with the Cheeger deformation metrics. We identify parameter values in the family to make a Stiefel manifold an Einstein manifold and show Stiefel manifolds always carry an Einstein metric. We analyze the sectional curvature range and identify the parameter range where the manifold has non-negative sectional curvature. We provide the exact sectional curvature range when the number of columns in a Stiefel matrix is $2$, and a conjectural range for other cases. We derive the formulas from two approaches, one from a global curvature formula derived in our recent work, another using curvature formulas for left-invariant metrics. The second approach leads to curvature formulas for Cheeger deformation metrics on normal homogeneous spaces.
\end{abstract}
\maketitle
\section{Introduction}
In a recent paper \cite{NguyenCurvature}, we derived global formulas to compute the curvature of a manifold $\rM$, embedded differentiably in a Euclidean space $\cE$, with metric defined by an operator $\sfg$ from $\rM$ to the space of positive-definite operators on $\cE$. The formulas have similar forms to the classical formula for the curvature in local coordinates. While we have provided a few applications of those formulas in that paper, we would like to show the formula could be used to compute the curvatures for a family of manifolds important in both theory and application.

The purpose of this paper is to compute and analyze curvatures of a Stiefel manifold with the family of metrics defined in \cite{ExtCurveStiefel}. It turns out this family of metrics is the same family of metrics arising from the Cheeger deformation, which has been one of the main tools to construct non-negative curvature metrics\cite{Cheeger1973,GZ2000,Ziller2007}. Thus, the curvatures could be computed in two ways, one is from our formula using Christoffel functions, which is very similar to the local-coordinate formula, the other way is to use the relationship with the Cheeger deformation. In the second method, the Stiefel manifold is identified with a quotient manifold of the special orthogonal group with a left-invariant metric. Using a result of Michor \cite{Michor2007} and the Euler-Poisson-Arnold framework \cite{Arnold1966}, we compute the $(1,3)$-curvature tensor of the Cheeger deformation of a normal homogeneous space. The second approach provides independent confirmation of our curvature formulas. The first method probably requires lengthier calculation, however, it is straightforward conceptually and could be implemented symbolically.

Recall for two positive integers $p < n$, the real Stiefel manifold $\St{p}{n}$ consists of real orthogonal matrices $Y$ of size $n\times p$. If $\alpha_1$, $\alpha_0$ are two positive numbers, the metric in \cite{ExtCurveStiefel} could be reparameterized so that the inner product of two tangent vectors $\xi, \eta$ on $\St{p}{n}$ at $Y\in \St{p}{n}$ is given by $\alpha_0\Tr(\xi^{\ft}\eta) + (\alpha_1-\alpha_0)\Tr(\xi^{\ft}YY^{\ft}\eta)$. Set $\alpha = \alpha_1/\alpha_0$, and up to scaling we can take $\alpha_0= 1$. This family of metrics contains both well-known metrics on Stiefel manifolds, the embedded ($\alpha=1$, where the metric is induced from the embedding in $\R^{n\times p}$) and canonical metrics $(\alpha=\frac{1}{2})$ ($\St{p}{n}$ is normal homogeneous in this case). It will be shown in \cref{prop:stf_leftinv} that if $\SOO(n)$ is equipped with a Cheeger deformation metric with deformation parameter $2\alpha$ (reviewed in \cref{sec:deform}) from the right-multiplication action of $\SOO(p)$ embedded diagonally then $\SOO(n)/\SOO(n-p)$ with the quotient metric could be identified with $\St{p}{n}$ with the metric just described.

While a framework to compute curvatures for Cheeger deformation metrics is available, explicit formulas and detailed analysis are not yet known to the best of our knowledge (note \cite{RapcsakTamas} is an early paper dealing with the embedded metric). We provide formulas for Riemannian, Ricci, scalar, and sectional curvature for the Stiefel manifold equipped with this family of metrics. We show the sectional curvature range always contains a specific interval, which is likely to be the full curvature range for metrics in the family. The ends of the interval are piecewise smooth functions described in \cref{tab:sec_range}. In particular, except for some special cases, for the embedded metric on the Stiefel manifold, we show the curvature range contains the interval $[-\frac{1}{2}, 1]$, thus it could have negative curvatures, in contrast to the canonical metric, which has range $[0, \frac{5}{4}]$.

Specifically, $\St{2}{3}$ has positive curvature for $\alpha < \frac{2}{3}$, non-negative curvature for $\alpha=\frac{2}{3}$ and both negative and positive curvature for $\alpha > \frac{2}{3}$. With $n > 3$, the Stiefel manifold $\St{2}{n}$ has non-negative curvature for $\alpha \leq \frac{2}{3}$, and both negative and positive curvature for $\alpha > \frac{2}{3}$, and we identify the exact sectional curvature range in this case. For $p \geq 3$, we show $\St{p}{n}$ has non-negative curvature for $\alpha \leq \frac{1}{2}$ and both negative and positive curvature otherwise. This agrees with \cite{GZ2000} and we actually show the curvature range contains negative values in the indicated intervals.

We also show the Stiefel manifold always has an Einstein metric, and when $p >2$, there are two metrics in the family (up to a scaling factor) that make the Stiefel manifold an Einstein manifold.  We note this may be the same metric as in \cite{Sagle1970}.

For notations, if $n$ and $m$ are two positive integers, by $\R^{n\times m}$, we denote the space of $n\times m$ matrices in $\R$, the field of real numbers. We denote by $\oo(p)$ the space of antisymmetric matrices in $\R^{p\times p}$. The transpose of matrix or adjoint of an operator is denoted by $\ft$. Working on a manifold, say $\rM$, by $\rD_{\xi} F$, we denote the directional (Lie) derivative of a scalar\slash vector\slash operator-valued function $F$ on $\rM$ in direction $\xi$ (either a tangent vector defined at a point $x\in\rM$, or a vector field on $\rM$). If $\cE$ is a Euclidean space (inner product space with a positive-definite inner product), the space of linear operators on $\cE$ is denoted by $\frL(\cE, \cE)$. Similarly, we denote by $\frL(\cE\otimes \cE, \cE)$ the space of bilinear form on $\cE$ with value in $\cE$. For two positive integers $n$ and $p$, the Stiefel manifold $\St{p}{n}$ is the space of matrices $Y\in \R^{n\times p}$ satisfying $Y^{\ft}Y = \dI_p$.  The Frobenius norm is denoted by $\|\|_F$.

\section{Curvature formulas for embedded manifolds with metric operators}\label{sec:review}Let $\rM\subset \cE$ be a differentiable embedding, where $\cE$ is a Euclidean space with a given inner product $\langle\rangle_{\cE}$, and $\rM$ is a differentiable submanifold, and $\sfg$ is an operator-valued function from $\rM$ to $\frL(\cE, \cE)$, such that $\sfg$ is positive-definite, then $\sfg$ induces a Riemannian metric on $\rM$, where the inner product of two tangent vectors $\xi,\eta$ at a point $x\in\rM$ is defined by $\langle \xi, \sfg_x\eta\rangle_{\cE}$. Here, each tangent space $T_x\rM$ is identified with a subspace of $\cE$ thanks to the embedding, so $\xi, \eta$ are considered as elements of $\cE$, while $\sfg_x$ denotes the evaluation of the operator $\sfg$ at $x$.

We call $(\rM, \sfg, \cE)$ an embedded ambient structure. The embedding allows us to identify vector fields on $\rM$ with $\cE$-valued functions, thus we can take directional derivatives. A Christoffel function is a function $\Gamma$ from $\rM$ with value in $\frL(\cE\otimes\cE, \cE)$, the space of $\cE$-bilinear forms, such that for two vector fields $\ttX, \ttY$ on $\rM$, the Levi-Civita connection on $\rM$ is given by
$$\nabla_\ttX\ttY  = \rD_\ttX\ttY + \Gamma(\ttX, \ttY)$$
In \cite{NguyenCurvature} we proved the following curvature formulas for three tangent vectors $\xi, \eta, \phi$
\begin{equation}\label{eq:rc1a}
  \begin{gathered}
        \RcM_{\xi,\eta}\phi = -(\rD_{\xi}\Gamma)(\eta, \phi) + (\rD_{\eta}\Gamma)(\xi, \phi)-\Gamma(\xi, \Gamma(\eta, \phi)) +\Gamma(\eta, \Gamma(\xi, \phi))\\
      \RcM_{\xi,\eta}\phi = -(\rD_{\xi}\Gamma)(\eta, \phi) + (\rD_{\eta}\Gamma)(\xi, \phi)-\Gamma(\Gamma(\phi, \eta), \xi)) +\Gamma(\Gamma(\phi, \xi), \eta)
\end{gathered}
\end{equation}
where $\rD_{\xi}\Gamma$ denotes the directional derivative of $\Gamma$, considered as an operator-valued function, in the direction $\xi$, for example. The curvature for three vector fields $\ttX, \ttY, \ttZ$ is defined in the convention $$\RcM_{\ttX\ttY }\ttZ = \nabla_{[\ttX, \ttY]} \ttZ - \nabla_{\ttX} \nabla_{\ttY} \ttZ + \nabla_{\ttY} \nabla_{\ttX} \ttZ$$

\section{Curvatures of the Stiefel manifold} \label{sec:stiefel} In the following, $p < n$ are two positive integers. In \cite{ExtCurveStiefel}, the authors introduced a family of metrics on the Stiefel manifold $\St{p}{n}$ of orthogonal matrices in $\R^{n\times p}$ (thus $Y^{\ft}Y = \dI_p$). We introduced a different parameterization in \cite{Nguyen2020a}. The metric depends on two positive real numbers $\alpha_0$, $\alpha_1$ with ratio $\alpha = \frac{\alpha_1}{\alpha_0}$. In the convention of \cref{sec:review}, we have $\rM := \St{p}{n}\subset \cE :=\R^{n\times p}$, with the base inner product on $\cE$ is the Frobenius inner product, thus $\langle \omega_1, \omega_2\rangle_{\cE} = \Tr(\omega_1\omega_2^{\ft})$ for $\omega_1,\omega_2\in\cE$. Consider the metric operator  $\sfg\omega =
\sfg_Y\omega := \alpha_0\omega + (\alpha_1 - \alpha_0)YY^{\ft}\omega$, for $Y\in \St{p}{n}$, with inverse $\sfg^{-1}\omega = \alpha^{-1}_0\omega + (\alpha^{-1}_1 - \alpha^{-1}_0)YY^{\ft}\omega$ and the inner product on $\cE$ induced by $\sfg$ is $\langle\omega_1, \sfg_Y\omega_2\rangle_{\cE} = \alpha_0\Tr\omega_1\omega_2^{\ft} + (\alpha_1 - \alpha_0)\Tr\omega_1^{\ft} YY^{\ft}\omega_2$, and this induces a Riemannian metric on $\St{p}{n}$.

A geodesic equation for this metric was derived in \cite{ExtCurveStiefel}, and we provided a different derivation of a Christoffel function $\Gamma$ in \cite{Nguyen2020a}. We will give another derivation of $\Gamma$ in \cref{prop:stf_leftinv} to clarify the concepts and
keep the material reasonably independent. For an orthogonal matrix $Y\in\St{p}{n}$ and $\omega, \omega_1, \omega_2\in\R^{n\times p}$, a Christoffel function is
\begin{equation}\begin{gathered}\label{stf_gamma}
    \Gamma(\omega_1, \omega_2) = \frac{1}{2}\bY(\omega_1^{\ft}\omega_2+\omega_2^{\ft}\omega_1) +(1-\alpha)(\dI_n-\bY\bY^{\ft})(\omega_1\omega_2^{\ft}+\omega_2\omega_1^{\ft})\bY
\end{gathered}
\end{equation}
We can extend $Y$ to a full basis $(Y|\Yperp)$ of $\R^n$, by adding $\Yperp$, an orthogonal complement. Thus, $\Yperp\Yperp^{\ft} = \dI_n - YY^{\ft}$, $\Yperp^{\ft}\Yperp = \dI_{n-p}, Y^{\ft}\Yperp = 0, \Yperp^{\ft}Y = 0$. Any matrix $\omega\in\cE=\R^{n\times p}$ could be represented in this basis as $\omega = YA +\Yperp B$ with $A\in\R^{p\times p}$, $B\in\R^{p\times(n-p)}$ and $\omega$ is a tangent vector to $\St{p}{n}$ at $Y$ if and only if $A$ is antisymmetric, $A\in\oo(p)$, or equivalently $Y^{\ft}\omega +\omega^{\ft}Y = 0$.

For two tangent vectors $\xi$ and $\eta$ at a point on the manifold, denote by $\langle\rangle_{\sfg}$ and $\|\|_{\sfg}$ the inner product and the norm defined by a metric operator $\sfg$. We will denote the wedge, the sectional curvature numerator, and the sectional curvature by
\begin{equation}
  \begin{gathered}
      ||\xi\wedge\eta||_{\sfg}^2 = ||\xi||_{\sfg}^2||\eta||_{\sfg}^2 -\langle\xi, \eta\rangle_{\sfg}^2\\
      \hcK(\xi, \eta) = \langle\RcM_{\xi, \eta}\xi, \eta\rangle_{\sfg}\\
      \cK(\xi, \eta) = \frac{\hcK(\xi, \eta)}{||\xi\wedge\eta||_{\sfg}^2}\\
      \end{gathered}
    \end{equation}
\begin{theorem}\label{prop:stiefel_cur} Representing three tangent vectors $\xi, \eta, \phi\in \R^{n\times p}$ at $Y\in\St{p}{n}$ in an orthogonal basis $(Y|\Yperp)$ of $\R^n$ as $\xi= YA_1+Y_{\perp}B_1, \eta = YA_2+\Yperp B_2, \phi=YA_3+\Yperp B_3$, where $A_1, A_2, A_3\in\oo(p)$ and $B_1, B_2, B_3 \in \R^{(n-p)\times p}$. Then the Riemannian curvature tensor is $\RcM_{\xi\eta}\phi = YA_R + \Yperp B_R$ with $A_R\in\oo(p), B_R\in\R^{(n-p)\times p}$ where
  \begin{equation}\label{eq:cur_ABCA}
\begin{gathered}
  A_R = Y^{\ft}\RcM_{\xi\eta}\phi =\frac{1-2\alpha}{4} (A_{1} B_{3}^{\ft} B_{2} -  A_{2} B_{3}^{\ft} B_{1}  -
   B_{1}^{\ft} B_{3} A_{2}  +  B_{2}^{\ft} B_{3} A_{1}) +\\
   \frac{1-\alpha}{2}(A_{3} B_{1}^{\ft} B_{2} - A_{3} B_{2}^{\ft} B_{1} -  B_{1}^{\ft} B_{2}A_{3}+ B_{2}^{\ft} B_{1} A_{3}) +\\
   \frac{1}{4}([[A_{1}, A_{2}], A_{3}]
   - A_{1} B_{2}^{\ft} B_{3}  + A_{2} B_{1}^{\ft} B_{3}   + B_{3}^{\ft} B_{1} A_{2} - B_{3}^{\ft} B_{2} A_{1})
\end{gathered}
  \end{equation}
  \begin{equation}\label{eq:cur_ABCB}
    \begin{gathered}
   B_R = \Yperp^{\ft}\RcM_{\xi\eta}\phi =
\frac{2\alpha^{2}-\alpha}{2} (B_{1} A_{3} A_{2} -  B_{2} A_{3} A_{1}) +\\ (\alpha^{2}-\alpha) (B_{3} A_{1} A_{2} -  B_{3} A_{2} A_{1}) +(1 - \alpha) (B_{3} B_{1}^{\ft} B_{2} - B_{3} B_{2}^{\ft} B_{1}) +\\ \frac{\alpha-2}{2} (B_{1} B_{2}^{\ft} B_{3} -B_{2} B_{1}^{\ft} B_{3})   +
      \frac{\alpha}{2}(B_{1} A_{2} A_{3}  - B_{1} B_{3}^{\ft} B_{2} - B_{2} A_{1} A_{3} +  B_{2} B_{3}^{\ft} B_{1})
  \end{gathered}
  \end{equation}
If $p > 1$, the Ricci and scalar curvatures are given by:
  \begin{equation}\label{eq:ricci}
    \begin{gathered}
\Ric(\xi, \eta)= (\frac{2-p}{4} + (p-n)\alpha^2)\Tr(A_1A_2) + [(1-p)\alpha + (n-2)]\Tr(B_1^{\ft}B_2)
      \end{gathered}
  \end{equation}
  \begin{equation}
    \begin{gathered}
\Scl(Y) = ((1-p)\alpha + n-2)(n-p)p + ((n-p)\alpha + \frac{p-2}{4\alpha})\frac{p(p-1)}{2}
    \end{gathered}
\end{equation}    
The sectional curvature numerator $\hcK$ is  computed from one of the following
\begin{equation}\label{eq:sec_cur}
  \begin{gathered}
    \hcK = \Tr(\frac{2-3\alpha}{2}B_2^{\ft}B_1B_1^{\ft}B_2 + \frac{3\alpha-4}{2}B_2^{\ft}B_1B_2^{\ft}B_1 + B_2^{\ft}B_2B_1^{\ft}B_1-\frac{\alpha}{4}[A_1, A_2]^2) \\
+\alpha\Tr((4\alpha-3)A_1A_2B_2^{\ft}B_1 + (3-2\alpha)A_1A_2B_1^{\ft}B_2-\alpha A_2^2B_1^{\ft}B_1 -\alpha A_1^2B_2^{\ft}B_2)
  \end{gathered}
\end{equation}  
\begin{equation}\label{eq:sec_sum_sq}
  \begin{gathered}
    \hcK =  \frac{\alpha}{4}\|[A_1,A_2] +(3-4\alpha)(B_2^{\ft}B_1-B_1^{\ft}B_2 )\|_F^2 +\\
    \alpha^2\|B_1A_2-B_2A_1\|_F^2 + \frac{1}{2}\|B_1B_2^{\ft}-B_2B_1^{\ft}\|_F^2 + \frac{(1-2\alpha)^3}{2}\|B_2^{\ft}B_1-B_1^{\ft}B_2\|_F^2
\end{gathered}    
\end{equation}
In particular, if $\alpha \leq \frac{1}{2}$, the sectional curvature is non-negative. If $\xi$ and $\eta$ are orthogonal, the sectional curvature denominator is $(\alpha_1\Tr A_1A_1^{\ft} +\alpha_0\Tr B_1B_1^{\ft})(\alpha_1\Tr A_2A_2^{\ft} +\alpha_0\Tr B_2B_2^{\ft})$.
\end{theorem}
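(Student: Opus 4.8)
The plan is to evaluate the curvature formula \eqref{eq:rc1a} on the Christoffel function \eqref{stf_gamma} and then specialize and contract. Since the right-hand side of \eqref{eq:rc1a} carries no derivatives of $\xi,\eta,\phi$, the only analytic input needed is the directional derivative of the operator-valued map $\Gamma$ at $\bY$ in a tangent direction; because the position map $\St{p}{n}\hookrightarrow\R^{n\times p}$ satisfies $\rD_\xi\bY=\xi$, differentiating the explicit $\bY$-dependence in \eqref{stf_gamma} gives, for fixed $\omega_1,\omega_2\in\R^{n\times p}$,
\[
(\rD_\xi\Gamma)(\omega_1,\omega_2)=\tfrac{1}{2}\,\xi(\omega_1^{\ft}\omega_2+\omega_2^{\ft}\omega_1)-(1-\alpha)(\xi \bY^{\ft}+\bY\xi^{\ft})(\omega_1\omega_2^{\ft}+\omega_2\omega_1^{\ft})\bY+(1-\alpha)(\dI_n-\bY\bY^{\ft})(\omega_1\omega_2^{\ft}+\omega_2\omega_1^{\ft})\xi .
\]
First I would substitute $(\omega_1,\omega_2)=(\eta,\phi)$ here and in \eqref{stf_gamma}, form the composite $\Gamma(\xi,\Gamma(\eta,\phi))$ by inserting \eqref{stf_gamma} into the second slot of \eqref{stf_gamma}, write down the four terms of \eqref{eq:rc1a}, and then substitute the block decompositions $\xi=\bY A_1+\Yperp B_1$, $\eta=\bY A_2+\Yperp B_2$, $\phi=\bY A_3+\Yperp B_3$. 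Repeatedly applying $\bY^{\ft}\bY=\dI_p$, $\Yperp^{\ft}\Yperp=\dI_{n-p}$, $\bY^{\ft}\Yperp=0$, $\bY\bY^{\ft}+\Yperp\Yperp^{\ft}=\dI_n$, and $A_i^{\ft}=-A_i$ reduces every term to a product of the $A_i,B_i$; projecting the result onto its $\bY$- and $\Yperp$-components by left multiplication by $\bY^{\ft}$ and $\Yperp^{\ft}$ and collecting monomials yields \eqref{eq:cur_ABCA} and \eqref{eq:cur_ABCB}.

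For the sectional curvature numerator I would put $\phi=\xi$ (so $A_3=A_1$, $B_3=B_1$) in \eqref{eq:cur_ABCA}--\eqref{eq:cur_ABCB} and compute $\hcK(\xi,\eta)=\langle\RcM_{\xi,\eta}\xi,\eta\rangle_{\sfg}$. Since $\bY^{\ft}(\bY A_R+\Yperp B_R)=A_R$ and $\Yperp^{\ft}(\bY A_R+\Yperp B_R)=B_R$, and $\langle\omega_1,\sfg_Y\omega_2\rangle_{\cE}=\alpha_0\Tr(\omega_1^{\ft}\omega_2)+(\alpha_1-\alpha_0)\Tr(\omega_1^{\ft}\bY\bY^{\ft}\omega_2)$ with $\alpha_0=1$, $\alpha_1=\alpha$, this equals $\alpha\Tr(A_R^{\ft}A_2)+\Tr(B_R^{\ft}B_2)$; cyclic rearrangement of the traces puts it in the form \eqref{eq:sec_cur}. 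The identity \eqref{eq:sec_cur}${}={}$\eqref{eq:sec_sum_sq} is then purely algebraic: expanding each $\|\cdot\|_F^2=\Tr((\cdot)^{\ft}(\cdot))$ in \eqref{eq:sec_sum_sq}, in particular the cross term $2(3-4\alpha)\Tr([A_1,A_2]^{\ft}(B_2^{\ft}B_1-B_1^{\ft}B_2))$ coming from the first square, and matching the coefficient of each trace monomial against \eqref{eq:sec_cur}. Given the sum-of-squares form, non-negativity for $\alpha\le\tfrac{1}{2}$ is immediate, because the four coefficients $\tfrac{\alpha}{4}$, $\alpha^2$, $\tfrac{1}{2}$, $\tfrac{(1-2\alpha)^3}{2}$ are all $\ge 0$ exactly when $\alpha>0$ and $1-2\alpha\ge 0$, so $\hcK\ge 0$ and hence $\cK\ge 0$. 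The orthogonal-case denominator is a direct computation: $\bY^{\ft}\xi=A_1$ and $\Yperp^{\ft}\xi=B_1$ give $\|\xi\|_{\sfg}^2=\alpha_1\Tr(A_1A_1^{\ft})+\alpha_0\Tr(B_1B_1^{\ft})$ (the mixed $\bY,\Yperp$ terms drop out), and likewise for $\eta$, so when $\langle\xi,\eta\rangle_{\sfg}=0$ one has $\|\xi\wedge\eta\|_{\sfg}^2=\|\xi\|_{\sfg}^2\|\eta\|_{\sfg}^2$.

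For the Ricci and scalar curvatures I would use the $\sfg$-orthonormal basis of $T_Y\St{p}{n}$ adapted to the splitting $T_Y\St{p}{n}=\bY\,\oo(p)\oplus\Yperp\R^{(n-p)\times p}$, namely $\{(2\alpha)^{-1/2}\,\bY(e_ae_b^{\ft}-e_be_a^{\ft})\}_{1\le a<b\le p}$ together with $\{\Yperp e_ie_j^{\ft}\}_{1\le i\le n-p,\,1\le j\le p}$ (the $e$'s being standard basis vectors of the appropriate dimensions), and sum the sectional numerator \eqref{eq:sec_cur} with one argument running over this basis: in the paper's sign convention the Ricci contraction amounts to $\Ric(\xi,\xi)=\sum_k\hcK(e_k,\xi)$. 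The $\bY\,\oo(p)$-part of the sum collapses via $\sum_{a<b}(e_ae_b^{\ft}-e_be_a^{\ft})X(e_ae_b^{\ft}-e_be_a^{\ft})=X^{\ft}-(\Tr X)\dI_p$ (so in particular $\sum_{a<b}(e_ae_b^{\ft}-e_be_a^{\ft})^2=(1-p)\dI_p$), and the $\Yperp$-part via $\sum_{i,j}(e_ie_j^{\ft})X(e_ie_j^{\ft})^{\ft}=(\Tr X)\dI_{n-p}$ and $\sum_{i,j}(e_ie_j^{\ft})^{\ft}X(e_ie_j^{\ft})=(\Tr X)\dI_p$; polarizing recovers the full bilinear form \eqref{eq:ricci}. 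The scalar curvature is then the $\sfg$-trace of \eqref{eq:ricci}: each $\bY\,\oo(p)$ basis vector contributes $\tfrac{1}{2\alpha}\bigl(\tfrac{2-p}{4}+(p-n)\alpha^2\bigr)\Tr\!\bigl((e_ae_b^{\ft}-e_be_a^{\ft})^2\bigr)=-\tfrac{1}{\alpha}\bigl(\tfrac{2-p}{4}+(p-n)\alpha^2\bigr)$, of which there are $\tfrac{p(p-1)}{2}$, and each $\Yperp$ basis vector contributes $(1-p)\alpha+n-2$, of which there are $(n-p)p$; summing gives the stated $\Scl(\bY)$.

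The main obstacle is the first step: each of the four summands of \eqref{eq:rc1a}, once \eqref{stf_gamma}, its derivative above, and the block decompositions are inserted, expands into a large collection of matrix monomials, and verifying the substantial cancellation down to the relatively compact \eqref{eq:cur_ABCA}--\eqref{eq:cur_ABCB} --- and similarly confirming the identity \eqref{eq:sec_cur}${}={}$\eqref{eq:sec_sum_sq} --- is where essentially all the work lies; in practice this is best organized symbolically, grouping terms by the power of $\alpha$ and by which of $\bY,\Yperp$ each factor carries.
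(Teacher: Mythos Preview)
Your proposal is correct and follows essentially the same route as the paper for the Riemann tensor and for the sectional curvature numerator: differentiate the explicit Christoffel function \eqref{stf_gamma}, plug into the global curvature formula \eqref{eq:rc1a}, pass to the $(A_i,B_i)$ block coordinates via $\bY^{\ft}$ and $\Yperp^{\ft}$, and collect monomials; then specialize to $\phi=\xi$ and pair with $\eta$ to get $\hcK$, and verify the sum-of-squares identity by expanding \eqref{eq:sec_sum_sq}. The paper does exactly this, and your derivative formula for $\rD_\xi\Gamma$ matches the paper's.

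The one place you diverge is the Ricci step. The paper does \emph{not} sum $\hcK(e_k,\xi)$ over an orthonormal basis; instead it reads $\Ric$ directly as the operator trace $\Tr\bigl((A_2,B_2)\mapsto(A_R,B_R)\bigr)$ and evaluates that trace using closed-form identities for maps of the type $X\mapsto AXB$, $X\mapsto AX^{\ft}B$ on $\R^{(n-p)\times p}$ and on $\oo(p)$ (collected in \cref{lem:mat_traces}). This bypasses any choice of basis and any appeal to the sectional formula. Your basis-sum approach is perfectly valid and yields the same answer, but it routes the computation through the already nontrivial expression \eqref{eq:sec_cur} and the combinatorial identities $\sum_{a<b}(e_ae_b^{\ft}-e_be_a^{\ft})X(e_ae_b^{\ft}-e_be_a^{\ft})=X^{\ft}-(\Tr X)\dI_p$, etc., whereas the paper's operator-trace method works straight off the linear-in-$(A_2,B_2)$ form of $A_R,B_R$ and is a bit cleaner. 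Either way the scalar curvature then drops out as you describe.
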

We also use the following expansion of \cref{eq:sec_sum_sq} when $A_1$ or $A_2$ is zero.
\begin{equation}\label{eq:sec_sum_sq_2}
  \begin{gathered}
    \hcK =  \frac{\alpha}{4}\|[A_1,A_2]\|_F^2 +\frac{\alpha(3-4\alpha)}{2}\Tr[A_1, A_2](B_2^{\ft}B_1-B_1^{\ft}B_2 )^{\ft} +\\
 \frac{2-3\alpha}{4}\|B_2^{\ft}B_1-B_1^{\ft}B_2\|_F^2 +  \alpha^2\|B_1A_2-B_2A_1\|_F^2 + \frac{1}{2}\|B_1B_2^{\ft}-B_2B_1^{\ft}\|_F^2
\end{gathered}    
\end{equation}

\begin{proof}
As noted, any $\omega\in \R^{n\times p}$ could be expressed as $\omega = YA +\Yperp B$, however $A$ may not be antisymmetric. By direct substitution $(\dI_n-\bY\bY^{\ft})(\eta\omega^{\ft}+\omega\eta^{\ft})\bY = \Yperp(B_2A^{\ft} - BA_2)$, hence
    $$\Gamma(\eta, \omega) = \frac{1}{2}\bY(- A_{2} A + A^{\ft} A_{2} + B^{\ft} B_{2} + B_{2}^{\ft} B) +(1-\alpha)\Yperp(B_{2} A^{\ft} - B A_{2})$$
    In particular, $Y^{\ft}\Gamma(\eta, \omega) = \frac{1}{2}(- A_{2} A + A^{\ft} A_{2} + B^{\ft} B_{2} + B_{2}^{\ft} B)$, $\Yperp^{\ft}\Gamma(\eta, \omega) = (1-\alpha)(B_2A^{\ft} - BA_2)$, and 
$$\begin{gathered}\rD_{\xi}\Gamma(\eta, \phi) =  \frac{1}{2}\xi(\eta^{\ft}\phi+\phi^{\ft}\eta) + \\(1-\alpha)\{ (\dI_n-\bY\bY^{\ft})(\eta\phi^{\ft}+\phi\eta^{\ft})\xi
-(\xi\bY^{\ft} +\bY\xi^{\ft})(\eta\phi^{\ft}+\phi\eta^{\ft})\bY 
\}\end{gathered}
  $$
Expanding $\xi, \eta,\phi$
    $$\begin{gathered}
      \Yperp^{\ft}(\rD_{\xi}\Gamma)(\eta, \phi)= \frac{1}{2}B_1(-A_2A_3-A_3A_2+B_2^{\ft}B_3+B_3^{\ft}B_2) +\\ (1-\alpha)\{B_2(-A_3 Y^{\ft}+ B_3^{\ft}\Yperp) +  B_3(-A_2Y^{\ft} + B_2^{\ft}\Yperp)\}(YA_1+\Yperp B_1) -\\ (1-\alpha)(B_1Y^{\ft}(-YA_2A_3-YA_3A_2)
      \\ =  \frac{B_{1} B_{2}^{\ft} B_{3}}{2}+ \frac{B_{1} B_{3}^{\ft} B_{2}}{2} +(\frac{1}{2} - \alpha)(B_{1} A_{2} A_{3} + B_{1} A_{3} A_{2}) +\\ (1-\alpha)(- B_{2} A_{3} A_{1}+ B_{2} B_{3}^{\ft} B_{1} - B_{3} A_{2} A_{1}+ B_{3} B_{2}^{\ft} B_{1})  
\end{gathered}$$
Simplify $Y^{\ft}(\xi Y^{\ft} + Y\xi^{\ft}) = A_1Y^{\ft}-A_1Y^{\ft}+B_1^{\ft}\Yperp^{\ft}=B_1^{\ft}\Yperp^{\ft} $
    $$\begin{gathered}
      Y^{\ft}(\rD_{\xi}\Gamma)(\eta, \phi)= \frac{1}{2}A_1(-A_2A_3+B_2^{\ft}B_3-A_3A_2+B_3^{\ft}B_2) -\\(1-\alpha)(B_1^{\ft}\Yperp^{\ft})(-\Yperp B_2A_3Y^{\ft}-\Yperp B_3A_2Y^{\ft})Y=\\
      (1- \alpha)(B_{1}^{\ft} B_{2} A_{3} + B_{1}^{\ft} B_{3} A_{2}) + \frac{1}{2}(-A_{1} A_{2} A_{3} -A_{1} A_{3} A_{2} + A_{1} B_{2}^{\ft} B_{3} + A_{1} B_{3}^{\ft} B_{2})
\end{gathered}$$
Next, use the formula for $\Gamma(\xi, \omega)$ with $\omega = \Gamma(\eta, \phi)$
    $$\begin{gathered}
    Y^{\ft}\Gamma(\xi, \Gamma(\eta, \phi)) =    
    \frac{1}{2}(-A_1(\frac{1}{2}(- A_{2} A_3 - A_3 A_{2} + B_3^{\ft} B_{2} + B_{2}^{\ft} B_3) ) +\\
    (\frac{1}{2}(- A_{2} A_3 - A_3 A_{2} + B_3^{\ft} B_{2} + B_{2}^{\ft} B_3))^{\ft}A_1 +B_1^{\ft}((1-\alpha)(-B_2A_3 - B_3A_2) ) +\\ ((1-\alpha)(-B_2A_3 - B_3A_2))^{\ft}B_1) = \\
     \frac{1-\alpha}{2}( A_{2} B_{3}^{\ft} B_{1} +  A_{3} B_{2}^{\ft} B_{1} - B_{1}^{\ft} B_{2} A_{3} - B_{1}^{\ft} B_{3} A_{2}) +
    \frac{1}{4}(A_{1} A_{2} A_{3} +\\ A_{1} A_{3} A_{2} -A_{1} B_{2}^{\ft} B_{3} - A_{1} B_{3}^{\ft} B_{2} - A_{2} A_{3} A_{1} - A_{3} A_{2} A_{1} + B_{2}^{\ft} B_{3} A_{1} + B_{3}^{\ft} B_{2} A_{1})
\end{gathered}$$
    $$\begin{gathered}\Yperp(\Gamma(\xi, \Gamma(\eta, \phi)) = (1-\alpha)\{B_1(\frac{1}{2}(- A_{2} A_3 - A_3 A_{2} + B_3^{\ft} B_{2} + B_{2}^{\ft} B_3)^{\ft} -\\ ((1-\alpha)(-B_2A_3 - B_3A_2))A_1)\}=\\
      (\alpha-1)^{2} (B_{2} A_{3} A_{1} +  B_{3} A_{2} A_{1}) + \frac{\alpha-1}{2}(B_{1} A_{2} A_{3} +  B_{1} A_{3} A_{2} -  B_{1} B_{2}^{\ft} B_{3} - B_{1} B_{3}^{\ft} B_{2})      
    \end{gathered}$$
Therefore:
    $$\begin{gathered}Y^{\ft}\RcM_{\xi\eta}\phi= -\{(1- \alpha)(B_{1}^{\ft} B_{2} A_{3} + B_{1}^{\ft} B_{3} A_{2}) +\\ \frac{1}{2}(-A_{1} A_{2} A_{3} -A_{1} A_{3} A_{2} + A_{1} B_{2}^{\ft} B_{3} + A_{1} B_{3}^{\ft} B_{2})\} +\\
      \{(1- \alpha)(B_{2}^{\ft} B_{1} A_{3} + B_{2}^{\ft} B_{3} A_{1}) + \frac{1}{2}(-A_{2} A_{1} A_{3} -A_{2} A_{3} A_{1} + A_{2} B_{1}^{\ft} B_{3} + A_{2} B_{3}^{\ft} B_{1})\}
      -\\ \{\frac{1-\alpha}{2}( A_{2} B_{3}^{\ft} B_{1} +  A_{3} B_{2}^{\ft} B_{1} - B_{1}^{\ft} B_{2} A_{3} - B_{1}^{\ft} B_{3} A_{2}) +
    \frac{1}{4}(A_{1} A_{2} A_{3} +\\ A_{1} A_{3} A_{2} -A_{1} B_{2}^{\ft} B_{3} - A_{1} B_{3}^{\ft} B_{2} - A_{2} A_{3} A_{1} - A_{3} A_{2} A_{1} + B_{2}^{\ft} B_{3} A_{1} + B_{3}^{\ft} B_{2} A_{1})\}
+ \\ \{\frac{1-\alpha}{2}( A_{1} B_{3}^{\ft} B_{2} +  A_{3} B_{1}^{\ft} B_{2} - B_{2}^{\ft} B_{1} A_{3} - B_{2}^{\ft} B_{3} A_{1}) +
\frac{1}{4}(A_{2} A_{1} A_{3} +\\ A_{2} A_{3} A_{1} -A_{2} B_{1}^{\ft} B_{3} - A_{2} B_{3}^{\ft} B_{1} - A_{1} A_{3} A_{2} - A_{3} A_{1} A_{2} + B_{1}^{\ft} B_{3} A_{2} + B_{3}^{\ft} B_{1} A_{2})\}  \\
=  \frac{1-2\alpha}{4} (A_{1} B_{3}^{\ft} B_{2} -  A_{2} B_{3}^{\ft} B_{1}  -
   B_{1}^{\ft} B_{3} A_{2}  +  B_{2}^{\ft} B_{3} A_{1}) +\\
   \frac{1-\alpha}{2}(A_{3} B_{1}^{\ft} B_{2} - A_{3} B_{2}^{\ft} B_{1} -  B_{1}^{\ft} B_{2}A_{3}+ B_{2}^{\ft} B_{1} A_{3})
   + \frac{1}{4}(A_{1} A_{2} A_{3} - A_{1} B_{2}^{\ft} B_{3}  -\\
  A_{2} A_{1} A_{3} + A_{2} B_{1}^{\ft} B_{3}  - A_{3} A_{1} A_{2} + A_{3} A_{2} A_{1} + B_{3}^{\ft} B_{1} A_{2} - B_{3}^{\ft} B_{2} A_{1})
      \end{gathered}
$$
The last expression follows from a term by term collection, for example, the coefficient of $A_1A_2A_3$ is $-(-1/2) -1/4=1/4$, and similarly for all terms with coefficient $1/4$. The coefficient for $A_1B_3^{\ft}B_2$ is $-1/2+1/4+(1-\alpha)/2=(1-2\alpha/4)$, and similar to all the terms with that coefficient.
    $$\begin{gathered}\Yperp^{\ft}\RcM_{\xi\eta}\phi = -(\frac{B_{1} B_{2}^{\ft} B_{3}}{2}+ \frac{B_{1} B_{3}^{\ft} B_{2}}{2} +(\frac{1}{2} - \alpha)(B_{1} A_{2} A_{3} + B_{1} A_{3} A_{2}) +\\ (1-\alpha)(- B_{2} A_{3} A_{1}+ B_{2} B_{3}^{\ft} B_{1} - B_{3} A_{2} A_{1}+ B_{3} B_{2}^{\ft} B_{1}) ) +\\
      (
\frac{B_{2} B_{1}^{\ft} B_{3}}{2}+ \frac{B_{2} B_{3}^{\ft} B_{1}}{2} +(\frac{1}{2} - \alpha)(B_{2} A_{1} A_{3} + B_{2} A_{3} A_{1}) +\\ (1-\alpha)(- B_{1} A_{3} A_{2}+ B_{1} B_{3}^{\ft} B_{2} - B_{3} A_{1} A_{2}+ B_{3} B_{1}^{\ft} B_{2})       
) -\\
(\alpha-1)^{2} (B_{2} A_{3} A_{1} +  B_{3} A_{2} A_{1}) - \frac{\alpha-1}{2}(B_{1} A_{2} A_{3} +  B_{1} A_{3} A_{2} -  B_{1} B_{2}^{\ft} B_{3} - B_{1} B_{3}^{\ft} B_{2})\\
+(\alpha-1)^{2} (B_{1} A_{3} A_{2} +  B_{3} A_{1} A_{2}) + \frac{\alpha-1}{2}(B_{2} A_{1} A_{3} +  B_{2} A_{3} A_{1} -  B_{2} B_{1}^{\ft} B_{3} - B_{2} B_{3}^{\ft} B_{1})  \\
\end{gathered}$$
Again, we collect term by term, (we do use a symbolic calculation program helper). The coefficient for $B_1B_2^{\ft}B_3$ is $-1/2+(\alpha-1)/2=(\alpha-2)/2$, and similar for $B_2B_1^{\ft}B3$. The coefficient for $B_1B_3^{\ft}B_2$ is $-1/2+(1-\alpha) +(\alpha-1)/2=-\alpha/2$, and similar for $B_2B_3^{\ft}B_1$. The coefficient for $B_1A_2A_3$ is $-(1/2-\alpha)-(\alpha-1)/2=\alpha/2$, and similar for  $B_2A_1A_3$. The coefficient for $B_1A_3A_2$ is $ -(\frac{1}{2}-\alpha)-(1-\alpha) -\frac{\alpha-1}{2} +(\alpha-1)^2= \alpha^2-\frac{\alpha}{2} = \frac{2\alpha^2-\alpha}{2}$ and similar for $B_2A_3A_1$. The coefficient for $B_3A_2A_1$ is $(1-\alpha)-(\alpha-1)^2 = \alpha-\alpha^2$, and $B_3A_1A_2$ follows by permutation. The coefficient for $B_3B_2^{\ft}B_1$ is $-(1-\alpha)$, and similar for $B_3B_1^{\ft}B_2$. Finally
$$\begin{gathered}
\Yperp^{\ft}\RcM_{\xi\eta}\phi=\frac{2\alpha^{2}-\alpha}{2} (B_{1} A_{3} A_{2} -  B_{2} A_{3} A_{1}) + (\alpha^{2}-\alpha) (B_{3} A_{1} A_{2} -  B_{3} A_{2} A_{1}) +\\(1 - \alpha) (B_{3} B_{1}^{\ft} B_{2} - B_{3} B_{2}^{\ft} B_{1}) +\frac{\alpha-2}{2} (B_{1} B_{2}^{\ft} B_{3} -B_{2} B_{1}^{\ft} B_{3})   +\\
      \frac{\alpha}{2}(B_{1} A_{2} A_{3}  - B_{1} B_{3}^{\ft} B_{2} - B_{2} A_{1} A_{3} +  B_{2} B_{3}^{\ft} B_{1})
      \end{gathered}
$$

The Ricci curvature is  $\Tr((A_2, B_2)\mapsto (A_R, B_R))$. Using item 3 of \cref{lem:mat_traces}, for the $A_R$ component, we compute the trace of
$$A_2\mapsto \frac{1-2\alpha}{4}(- A_{2} B_{3}^{\ft} B_{1}  - B_{1}^{\ft} B_{3} A_{2}) +  \frac{1}{4}([[A_{1}, A_{2}], A_{3}] + A_{2} B_{1}^{\ft} B_{3} + B_{3}^{\ft} B_{1} A_{2})$$
which evaluates to $\frac{1-2\alpha}{4}(p-1)\Tr(-B_3^{\ft}B_1) + \frac{1}{4}((2-p)\Tr(A_1A_3)+p\Tr(B_1^{\ft}B_3)-\Tr(B_1^{\ft}B_3))$, or $\frac{2-p}{4}\Tr(A_1A_3) + (p-1)\frac{\alpha}{2}\Tr(B_1^{\ft}B_3)$. Here, we need $p>1$, otherwise $\oo(p)$ is zero and there is no contribution from this component.

For the $B_R$ component, use item 1 of \cref{lem:mat_traces}, we compute
$$\begin{gathered}\Tr(B_2\mapsto  \frac{2\alpha^{2}-\alpha}{2} ( -  B_{2} A_{3} A_{1}) +  (1 - \alpha) (B_{3} B_{1}^{\ft} B_{2} - B_{3} B_{2}^{\ft} B_{1}) +\\
  \frac{\alpha-2}{2} (B_{1} B_{2}^{\ft} B_{3} -B_{2} B_{1}^{\ft} B_{3})   +
  \frac{\alpha}{2}(- B_{1} B_{3}^{\ft} B_{2} - B_{2} A_{1} A_{3} +  B_{2} B_{3}^{\ft} B_{1}))\\=
\frac{2\alpha^{2}-\alpha}{2}(n-p) \Tr(-A_{3} A_{1}) + (1 - \alpha)(p-1)\Tr(B_{3} B_{1}^{\ft}) +\\
  \frac{\alpha-2}{2}(1-n+p)\Tr(B_{1}^{\ft} B_{3})   +
  \frac{\alpha(n-2p)}{2}\Tr(B_{1} B_{3}^{\ft}) - \frac{\alpha(n-p)}{2}\Tr(A_{1} A_{3})
\end{gathered}$$
The Ricci curvature is
$$\begin{gathered}(\frac{2-p}{4}
-\frac{2\alpha^{2}-\alpha}{2}(n-p) - \frac{\alpha(n-p)}{2}
) \Tr(A_1A_3) +\\ \{(p-1)\frac{\alpha}{2}+(1 - \alpha)(p-1)
+\frac{\alpha-2}{2}(1-n+p) + \frac{\alpha(n-2p)}{2}\}\Tr(B_1^{\ft}B_3)\\
=  (\frac{2-p}{4} + (p-n)\alpha^2)\Tr(A_1A_2) + [(1-p)\alpha + (n-2)]\Tr(B_1^{\ft}B_2)
\end{gathered}
$$
The Ricci map is thus $(A_2, B_2)\mapsto ((\frac{p-2}{4\alpha} + (n-p)\alpha)A_2,  ((1-p)\alpha + (n-2))B_2)$, which gives us the scalar curvature formula.

For the sectional curvature, we substitute $A_1, B_1$ in place of $A_3, B_3$ in the expressions for $A_R$ and $B_R$, then compute $\Tr(-\alpha A_R A_2 + B_R B_2^{\ft})$
$$\begin{gathered}
\hcK(\xi, \eta) = \Tr(-\alpha(\frac{1-2\alpha}{4} (A_{1} B_{1}^{\ft} B_{2} -  A_{2} B_{1}^{\ft} B_{1}  -
   B_{1}^{\ft} B_{1} A_{2}  +  B_{2}^{\ft} B_{1} A_{1}) +\\
   \frac{1-\alpha}{2}(A_{1} B_{1}^{\ft} B_{2} - A_{1} B_{2}^{\ft} B_{1} -  B_{1}^{\ft} B_{2}A_{1}+ B_{2}^{\ft} B_{1} A_{1}) +\\
   \frac{1}{4}([[A_{1}, A_{2}], A_{1}]
   - A_{1} B_{2}^{\ft} B_{1}  + A_{2} B_{1}^{\ft} B_{1}   + B_{1}^{\ft} B_{1} A_{2} - B_{1}^{\ft} B_{2} A_{1}))A_{2}) \\+\Tr((
\frac{2\alpha^{2}-\alpha}{2} (B_{1} A_{1} A_{2} -  B_{2} A_{1} A_{1}) + (\alpha^{2}-\alpha) (B_{1} A_{1} A_{2} -  B_{1} A_{2} A_{1}) +\\(1 - \alpha) (B_{1} B_{1}^{\ft} B_{2} - B_{1} B_{2}^{\ft} B_{1}) +\frac{\alpha-2}{2} (B_{1} B_{2}^{\ft} B_{1} -B_{2} B_{1}^{\ft} B_{1})   +\\
\frac{\alpha}{2}(B_{1} A_{2} A_{1}  - B_{1} B_{1}^{\ft} B_{2} - B_{2} A_{1} A_{1} +  B_{2} B_{1}^{\ft} B_{1}))B_2^{\ft})
\end{gathered}
$$
We collect terms. From $-\Tr([[A_1,A_2]A_1]A_2) = \Tr[A_1, A_2][A_1, A_2]^{\ft}$, terms involving $A_1, A_2$ only are $\alpha/4\Tr[A_1, A_2][A_1, A_2]^{\ft}$.
Terms with both $A$'s and $B$'s:
$$\begin{gathered}
 \Tr(\alpha(\frac{1-2\alpha}{4} (-A_{1} B_{1}^{\ft} B_{2}A_2 +  A_{2} B_{1}^{\ft} B_{1}A_2  +
   B_{1}^{\ft} B_{1} A_{2}^2  -  B_{2}^{\ft} B_{1} A_{1}A_2) +\\
   \frac{1-\alpha}{2}(-A_{1} B_{1}^{\ft} B_{2}A_2 + A_{1} B_{2}^{\ft} B_{1}A_2 + B_{1}^{\ft} B_{2}A_{1}A_2 - B_{2}^{\ft} B_{1} A_{1}A_2) +\\
   \frac{1}{4}( A_{1} B_{2}^{\ft} B_{1}A_2  - A_{2} B_{1}^{\ft} B_{1}A_2 - B_{1}^{\ft} B_{1} A_{2}^2 + B_{1}^{\ft} B_{2} A_{1}A_{2}))) \\+\alpha\Tr(
\frac{2\alpha-1}{2} (B_{1} A_{1} A_{2}B_2^{\ft} -  B_{2} A_{1} A_{1}B_2^{\ft}) + (\alpha-1) (B_{1} A_{1} A_{2}B_2^{\ft} -  B_{1} A_{2} A_{1}B_2^{\ft}) +\\
\frac{1}{2}(B_{1} A_{2} A_{1}B_2^{\ft}   - B_{2} A_{1} A_{1}B_2^{\ft}))=\\
\alpha\Tr(
(\frac{1-\alpha}{2}+\frac{1}{4}-(\alpha-1)+\frac{1}{2})A_2A_1B_2^{\ft}B_1+
(-\frac{1-2\alpha}{4}-\frac{1-\alpha}{2})A_2A_1B_1^{\ft}B_2 +\\
(-\frac{1-2\alpha}{4}-\frac{1-\alpha}{2}+\alpha-1+\frac{2\alpha-1}{2})A_1A_2B_2^{\ft}B_1+ \\
(\frac{1-\alpha}{2}+\frac{1}{4})A_1A_2B_1^{\ft}B_2+
(2\frac{1-2\alpha}{4}-\frac{2}{4}) A_2^2B_1^{\ft}B_1+
(-\frac{2\alpha-1}{2}-\frac{1}{2})A_1^2B_2^{\ft}B_2) =\\
\alpha\Tr(
\frac{9-6\alpha}{4}A_2A_1B_2^{\ft}B_1
+\frac{4\alpha-3}{4}A_2A_1B_1^{\ft}B_2 +
\frac{12\alpha-9}{4}A_1A_2B_2^{\ft}B_1+\\ 
\frac{3-2\alpha}{4}A_1A_2B_1^{\ft}B_2
-\alpha A_2^2B_1^{\ft}B_1 -\alpha A_1^2B_2^{\ft}B_2)=\\
\alpha\Tr((4\alpha-3)A_1A_2B_2^{\ft}B_1 + (3-2\alpha)A_1A_2B_1^{\ft}B_2-\alpha A_2^2B_1^{\ft}B_1 -\alpha A_1^2B_2^{\ft}B_2)
\end{gathered}
$$
where we use $\Tr(A_2A_1B_2^{\ft}B_1) = \Tr((A_2A_1B_2^{\ft}B_1)^{\ft}) = \Tr(A_1A_2B_1^{\ft}B_2)$ and similarly $\Tr(A_2A_1B_1^{\ft}B_2) = \Tr(A_1A_2B_2^{\ft}B_1)$. Next, we collect the terms with $B_1$ and $B_2$ only:
$$\begin{gathered}
\Tr((1 - \alpha) (B_{1} B_{1}^{\ft} B_{2}B_2^{\ft} - B_{1} B_{2}^{\ft} B_{1}B_2^{\ft}) +\frac{\alpha-2}{2} (B_{1} B_{2}^{\ft} B_{1}B_2^{\ft} -B_{2} B_{1}^{\ft} B_{1}B_2^{\ft})   +\\
\frac{\alpha}{2}(  - B_{1} B_{1}^{\ft} B_{2}B_2^{\ft}  +  B_{2} B_{1}^{\ft} B_{1}B_2^{\ft})) =\\
\Tr((1-\frac{3\alpha}{2})B_1B_1^{\ft}B_2B_2^{\ft} +(\alpha-1 +\frac{\alpha-2}{2})B_1B_2^{\ft}B_1B_2^{\ft} +(-\frac{\alpha-2}{2} +\frac{\alpha}{2})B_2B_1^{\ft}B_1B_2^{\ft})\\
=\Tr(\frac{2-3\alpha}{2}B_1B_1^{\ft}B_2B_2^{\ft} +\frac{3\alpha-4}{2}B_1B_2^{\ft}B_1B_2^{\ft} +B_2B_1^{\ft}B_1B_2^{\ft})
\end{gathered}
$$
This proves \cref{eq:sec_cur}. On the other hand, it is clear on the right-hand side of \cref{eq:sec_sum_sq}, the $A$'s only term is $\frac{\alpha}{4}\Tr[A_1, A_2][A_1, A_2]^{\ft}$, the $B$'s only term is:
$$\begin{gathered}
(\frac{\alpha(3-4\alpha)^2}{4}+\frac{(1-2\alpha)^3}{2})\Tr(B_2^{\ft}B_1-B_1^{\ft}B_2)(B_2^{\ft}B_1-B_1^{\ft}B_2)^{\ft} +\\
  \frac{1}{2}\Tr(B_1B_2^{\ft}-B_2B_1^{\ft})(B_1B_2^{\ft}-B_2B_1^{\ft})^{\ft} =\\
  \frac{2-3\alpha}{4}\Tr(B_2^{\ft}B_1B_1^{\ft}B_2 -B_2^{\ft}B_1B_2^{\ft}B_1 - B_1^{\ft}B_2B_1^{\ft}B_2 +B_1^{\ft}B_2B_2^{\ft}B_1) +\\
  \frac{1}{2}\Tr(B_1B_2^{\ft}B_2B_1^{\ft} -B_1B_2^{\ft}B_1B_2^{\ft} - B_2B_1^{\ft}B_2B_1^{\ft} +B_2B_1^{\ft}B_1B_2^{\ft})\\
  =\Tr(2\frac{2-3\alpha}{4}B_1B_1^{\ft}B_2B_2^{\ft} + (-2\frac{2-3\alpha}{4}-2\frac{1}{2})B_1B_2^{\ft}B_1B_2^{\ft} + 2\frac{1}{2}B_2B_1^{\ft}B_1B_2^{\ft})\\
  =\Tr(\frac{2-3\alpha}{2}B_1B_1^{\ft}B_2B_2^{\ft} + \frac{3\alpha-4}{2}B_1B_2^{\ft}B_1B_2^{\ft} + B_2B_1^{\ft}B_1B_2^{\ft})
\end{gathered}
$$
The terms with both $A$ and $B$ in \cref{eq:sec_sum_sq} are:
$$\begin{gathered}
  \alpha\Tr(\frac{3-4\alpha}{2}(A_2A_1-A_1A_2)(B_2^{\ft}B_1-B_1^{\ft}B_2 ) + \alpha(B_1A_2-B_2A_1)^{\ft} (B_1A_2-B_2A_1))\\
  =  \alpha\Tr\{(\frac{3-4\alpha}{2}+\alpha)A_2A_1B_2^{\ft}B_1 -\frac{3-4\alpha}{2}A_2A_1B_1^{\ft}B_2 -\frac{3-4\alpha}{2}A_1A_2B_2^{\ft}B_1 +\\
  (\frac{3-4\alpha}{2}+\alpha) A_1A_2B_1^{\ft}B_2 -\alpha A_2^2B_1^{\ft}B_1 -\alpha A_1^2B_2^{\ft}B_2 \}\\
    =  \alpha\Tr\{\frac{3-2\alpha}{2}A_2A_1B_2^{\ft}B_1 -\frac{3-4\alpha}{2}A_2A_1B_1^{\ft}B_2 -\frac{3-4\alpha}{2}A_1A_2B_2^{\ft}B_1 +\\
    \frac{3-2\alpha}{2} A_1A_2B_1^{\ft}B_2 -\alpha A_2^2B_1^{\ft}B_1 -\alpha A_1^2B_2^{\ft}B_2 \}\\
    =  \alpha\Tr\{(4\alpha-3)A_1A_2B_2^{\ft}B_1 +    (3-2\alpha) A_1A_2B_1^{\ft}B_2 -\alpha A_2^2B_1^{\ft}B_1 -\alpha A_1^2B_2^{\ft}B_2 \}
\end{gathered}
$$
Therefore we have shown \cref{eq:sec_sum_sq} gives us the sectional curvature numerator.
For the sign of the sectional curvature, in \cref{eq:sec_sum_sq} the terms are all positive, except for the last, which is non-negative if $\alpha \leq\frac{1}{2}$. The formula for the curvature denominator is clear.
\end{proof}
Recall an Einstein manifold is a Riemannian manifold where the Ricci curvature tensor is proportional to the metric tensor. We have a quick application
\begin{corollary}For $p>1$, the Stiefel manifold with the metric $\sfg = \alpha_0\omega + (\alpha_1 - \alpha_0)YY^{\ft}\omega$ is an Einstein manifold if and only if $\alpha=\alpha_1/\alpha_0$ satisfies the equation
  \begin{equation}\label{eq:einstein}
    (n-1)\alpha^2 - (n-2)\alpha + \frac{p-2}{4} = 0
\end{equation}    
For $p=2$, $\alpha = \frac{n-2}{n-1}$ is the only value of $\alpha$ that makes $\St{2}{n}$ an Einstein manifold. If $p>2$, there are two values of $\alpha$ in the family making the Stiefel manifold an Einstein manifold.
\end{corollary}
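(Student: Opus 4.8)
The plan is to compare the Ricci tensor of \cref{prop:stiefel_cur} with the metric tensor, both expressed in the $(A,B)$-coordinates used there. First I would rewrite the metric in those coordinates: with $\xi = YA_1 + \Yperp B_1$ and $\eta = YA_2 + \Yperp B_2$, the relations $Y^{\ft}Y = \dI_p$, $\Yperp^{\ft}\Yperp = \dI_{n-p}$, $Y^{\ft}\Yperp = 0$ give $\Tr(\xi^{\ft}\eta) = \Tr(A_1^{\ft}A_2) + \Tr(B_1^{\ft}B_2)$ and $\Tr(\xi^{\ft}YY^{\ft}\eta) = \Tr(A_1^{\ft}A_2)$, hence, using $A_1, A_2\in\oo(p)$,
\[\langle \xi, \eta\rangle_{\sfg} = -\alpha_1\Tr(A_1A_2) + \alpha_0\Tr(B_1^{\ft}B_2).\]
Both $\Ric$ and $\sfg$ are thus linear combinations of the two bilinear forms $(\xi,\eta)\mapsto\Tr(A_1A_2)$ and $(\xi,\eta)\mapsto\Tr(B_1^{\ft}B_2)$ on $T_Y\St{p}{n}$, and since $p>1$ these two forms are linearly independent (evaluate on $\xi=\eta=YA$ with $0\neq A\in\oo(p)$, and on $\xi=\eta=\Yperp B$). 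So $\Ric = \lambda\sfg$ for a constant $\lambda$ holds if and only if the two coefficients match: $\frac{2-p}{4} + (p-n)\alpha^2 = -\lambda\alpha_1$ and $(1-p)\alpha + (n-2) = \lambda\alpha_0$.

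Next I would eliminate $\lambda$. Writing $\alpha_1 = \alpha\alpha_0$, substituting $\lambda\alpha_0 = (1-p)\alpha + (n-2)$ into the first equation, and cancelling $\alpha_0$, yields $\frac{2-p}{4} + (p-n)\alpha^2 = -\alpha\bigl((1-p)\alpha + (n-2)\bigr)$, which rearranges to exactly \cref{eq:einstein}. Since the expressions in \cref{eq:ricci} and in the metric formula above have the same form at every point of $\St{p}{n}$, this single comparison is equivalent to the Einstein condition globally, so no further work is needed there.

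Finally I would study the quadratic $q(\alpha) := (n-1)\alpha^2 - (n-2)\alpha + \frac{p-2}{4}$ for $\alpha>0$. For $p=2$ the constant term vanishes and $q(\alpha) = \alpha\bigl((n-1)\alpha - (n-2)\bigr)$, whose only positive root is $\alpha = \frac{n-2}{n-1}$. For $p>2$ (so $n>p\geq 3$), the product of the roots, $\frac{p-2}{4(n-1)}$, and their sum, $\frac{n-2}{n-1}$, are both strictly positive, so it remains only to check that the discriminant $\Delta = (n-2)^2 - (n-1)(p-2)$ is positive; since $p\leq n-1$ one gets $\Delta \geq (n-2)^2 - (n-1)(n-3) = 1 > 0$, giving two distinct positive roots. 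The argument is routine once the metric is put in $(A,B)$-coordinates; the only points needing a little care are the use of $p>1$ to guarantee independence of the two bilinear forms, and the discriminant and root-sign bookkeeping in the last step.
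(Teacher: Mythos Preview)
Your proof is correct and follows essentially the same approach as the paper: compare the Ricci tensor from \cref{eq:ricci} against the metric in $(A,B)$-coordinates, equate the two scalar coefficients, and analyze the resulting quadratic. Your version is slightly more explicit in justifying why $p>1$ is needed (independence of the two bilinear forms) and in verifying via Vieta that both roots are positive, but the argument is the same.
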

\begin{proof}From \cref{eq:ricci}, the manifold is an Einstein manifold if and only if $(n-p)\alpha^2 +(p-2)/4 = \alpha (n-2 +(1-p)\alpha)$, from here \cref{eq:einstein} follows. When $p = 2$, it is clear $\frac{n-2}{n-1}$ is the only solution. When $p>2$, \cref{eq:einstein} has positive discriminant $(n-2)^2 + (p-2)(n-1)$, and  has two positive roots.
\end{proof}  
It is noted in \cite{ExtCurveStiefel} that when $p = n-1$, $\St{n-1}{n}$ is just $\SOO(n)$. Thus, we have provided $\SOO(n)$ with Einstein metrics.

\section{Sectional curvature range}
We have seen the sectional curvature numerator $\hcK$ could be expressed as a weighted sum of squares, this allows us to estimate the sectional curvature range. If $p=1$ then the Stiefel manifold is a sphere and has constant sectional curvature. Therefore we will assume $p > 1$ below.

It is easy to establish upper and lower bounds (not tight) for the sectional curvature from \cref{eq:sec_sum_sq}. Using the triangle inequality we can bound $\cK$ from  \cref{eq:sec_sum_sq} by bounding an expression of the form
$K_1=a\|[A_1, A_2]\|^2_F + b\|B_1B_2^{\ft}-B_2B_1^{\ft}\|^2_F + c\|B_1^{\ft}B_2 - B_2^{\ft}B_1\|^2_F + d\|B_1A_2 - B_2A_1\|_F^2$ by the curvature denominator $S :=(\alpha\|A_1\|_F^2 + \|B_1\|_F^2)(\alpha\|A_2\|_F^2 + \|B_2\|_F^2)$. We use the inequality $\|[X, Z]\|_F^2 \leq \|X\|_F^2\|Z\|_F^2$, for two antisymmetric matrices in $\oo(n)$ if $n>3$ (\cite{Ge2014}, lemma 2.5 provides the explicit matrices where we have equality, see also \cite{GKRgroup}, proposition 4.2). Apply that inequality with $X = \frac{1}{\sqrt{2}}\begin{bmatrix}\sqrt{2\alpha}A_1 & -B_1^{\ft}\\B_1 & 0\end{bmatrix}$, $Z= \frac{1}{\sqrt{2}}\begin{bmatrix}\sqrt{2\alpha} A_2 & B_2^{\ft}\\B_2 & 0\end{bmatrix}$ and similar inequalities for $B_1=B_2=0$, $A_1=A_2=0$, we can bound each term of $K_1$ by $S$, thus getting a bound for $\cK$.

We will attempt to provide more refined bounds. The analysis of sectional curvature range for Stiefel manifolds is more complicated than that of symmetric spaces because of the presence of both the $A$ and $B$ components. The manifold is homogeneous, therefore the sectional curvature range is the same at any point. Let $E_{ij}$ $1\leq i\leq p$ be the elementary matrix in $\R^{p\times p}$ with the $(i,j)$ entry is $1$, and other entries $0$. Let $e_{ij}$ be the elementary matrix in $\R^{(n-p)\times p}$ $(1\leq i\leq n-p, 1\leq j\leq p)$ with the $(i, j)$ entry is $1$ and the other entries are zero.

    In \cref{tab:corners}, we show sectional curvature values of $\St{p}{n}$ at several sections (pairs of linearly independent tangent vectors), each defined by a quadruple $(A_1, B_1, A_2, B_2)$. A few of those sections come from the corresponding sections for $\SOO(n)$, in \cite{Ge2014} as cited. We have noted that $\cK$ is non-negative if $\alpha\leq \frac{1}{2}$, and \cref{tab:corners} shows a section with $\cK=\frac{2-3\alpha}{2}$, thus, if $\alpha >\frac{2}{3}$, $\cK$ always has negative values in its range. When $p=2$, we will show $\cK$ is non-negative if $\alpha\leq \frac{2}{3}$. When $p>2$, $\cK$ could be negative if $\frac{1}{2}\leq \alpha\leq \frac{2}{3}$. To see this, let $A_1 = E_{12} - E_{21}, B_1 = \gamma^{1/2} e_{11}, A_2 = E_{23}-E_{32}$, $B_2 = \gamma^{1/2} e_{13}$ for $\gamma\in\R,\gamma> 0$. Thus, $[A_1,A_2] = E_{13} - E_{31}$, $B_1A_2 = B_2A_1 = 0$, $B_1B_2^{\ft} = 0$, $B_1^{\ft}B_2 -B_2^{\ft}B_1= \gamma(E_{13}-E_{31})$. By \cref{eq:sec_sum_sq_2}, the corresponding sectional curvature is
\begin{equation}\label{eq:frc}
  \frc(\gamma) = \frac{\alpha/2 + \alpha(4\alpha-3)\gamma + (2-3\alpha)\gamma^2/2}{(2\alpha+\gamma)^2}
  \end{equation}
with $\frac{d}{d\gamma}\frc(\gamma) = \alpha((7-10\alpha)\gamma -1 -6\alpha +8\alpha^2)/(\gamma + 2\alpha)^3$, $\frc$ is minimized at
\begin{equation}\label{eq:gammamin}
  \gamma_{\min}(\alpha) = (1 + 6\alpha - 8\alpha^2)/(7-10\alpha)
\end{equation}
Substitute in, the function $\frl(\alpha) := \frc(\gamma_{\min}(\alpha))$ is slightly negative for $\alpha$ in the interval $(\frac{1}{2}, \frac{7}{10})$, which contains $\frac{2}{3}$. Note that $\alpha=\frac{7}{10}$ is a removable singularity of $\frl$, and setting $\frl(\frac{7}{10}) =\lim_{\gamma\to\infty}\frc(\gamma) =\frac{1}{2}(2-3\times \frac{7}{10}) = \frac{-1}{20}$ makes it a smooth function. This function is strictly decreasing and negative in the interval $(\frac{1}{2}, \frac{7}{10})$, with $\frl(\frac{1}{2}) = 0$ and $\frl(\frac{2}{3})$ around $-0.02$.

The curvature range contains the interval between the maximum and minimum of values in \cref{tab:corners} if the condition in the last column of the table is satisfied.  For $p=2$,
\cref{prop:p_2} determines the exact curvature range. For $p>2$, numerically, the sections in that table seem to determine the range completely. For each $\alpha$, the lower and upper bound of the curvature range, found numerically by optimizing $\cK$ over the space of all sections, the Grassmann manifold of two-dimensional subspaces of $\R^{\dim\St{p}{n}}$ is within the maximal and minimal values of the sections in the table if the condition in the last column is satisfied, as shown in \cref{fig:stiefel_curv_4_3}, \ref{fig:stiefel_curv_n_3}, \ref{fig:stiefel_curv_n_geq4}, \ref{fig:stiefel_curv_n_n_1}. There, we plot the graphs of the curvatures of the list of sections as functions of $\alpha$ for the scenarios, and also plot the results of the numerical optimization for curvature range, for a set of $30$ values of $\alpha$. The optimization is done for $n=4, p=3$, $n=5 , p \in \{3, 4\}$, $n=6, p\in\{3, 4, 5\}$, $n=10, p\in\{3, 5, 10, 9\}$, $n= 100, p\in\{10, 20\}$. The curve $ll$ in the figure is for the function $\frl$. The reason the optimized maximum is sometimes smaller than the proposed maximum, for small $\alpha$, is because the optimizer may be stuck at a local maximum.

\begin{table}
  \begin{tabular}{c c c}
    \hline
    $\cK$ & $A$ and $B$ & condition\\
    \hline
    0 & $A_1=A_2=E_{12}-E_{21},B_1= 2e_{13}, B_2=-\alpha e_{13}$ & $n\geq 4, p\geq 3$\\    
    0 & $A_1=A_2=0,B_1= e_{11}, B_2=e_{22}$ & $n\geq 4, p\leq n-2$\\
    1 & $A_1=A_2=0, B_1= e_{11}, B_2 = e_{21}$&$n\geq 4, p\leq n-2$\\
    $\frac{1}{2\alpha+1}$ & $A_1=E_{12}-E_{21}, A_2=E_{1p}-E_{p1}, B_1=-e_{1p},B_2=e_{12}$& $p\geq3$\\
    $\frac{1}{8\alpha}$ &$A_1=E_{12} - E_{21}, A_2=E_{23} - E_{32}, B_1=B_2=0$ & $p\geq 3$\\
    $\frac{1}{4\alpha}$ &$A_1=E_{12} - E_{21}+E_{p-1,p}-E_{p,p-1}$ & $p\geq 4$\\
     & $A_2=E_{1,p-1} - E_{p-1,1}-E_{2,p}+E_{p,2}, B_1=B_2=0$ & \\
    $\frac{\alpha}{2}$ & $A_1=(E_{12} - E_{21}),A_2=0, B_1= 0, B_2=e_{11}$ &\\
    $\frac{2-3\alpha}{2}$ & $A_1=0,A_2=0, B_1= e_{11}, B_2=e_{12}$ &\\
    $\frac{4-3\alpha}{2}$ & $A_1=A_2=0, B_1= e_{11}+e_{22}, B_2=e_{12}-e_{21}$ &$n\geq4, p \leq n-2$\\
    $\frl(\alpha)$ & $A_1 = E_{12}-E_{21}, A_2 = E_{23}-E_{31}$ & \\
    & $B_1=\gamma_{\min}(\alpha)^{1/2}e_{11}, B_2 = \gamma_{\min}(\alpha)^{1/2}e_{13}$ & $p\geq 3$, $\alpha < 7/10$
\end{tabular}
  \caption{Sectional curvature at representative sections. $\frl(\alpha) = \frc(\gamma_{\min}(\alpha))$, from \cref{eq:gammamin} and \cref{eq:frc}.}
  \label{tab:corners}
\end{table}
\begin{figure}
  \centering
\includegraphics[scale=0.4]{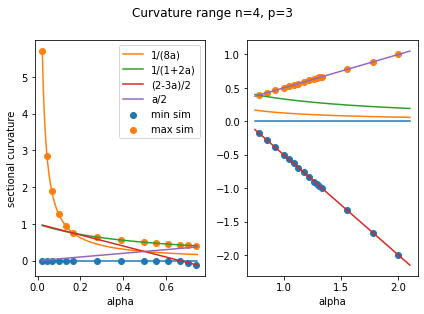}
\caption{Numerical test for curvature range $n=4, p=3$. Max, min sims are curvature ranges from numerical optimization.}
\label{fig:stiefel_curv_4_3}
\end{figure}
\begin{figure}
  \centering
\includegraphics[scale=0.4]{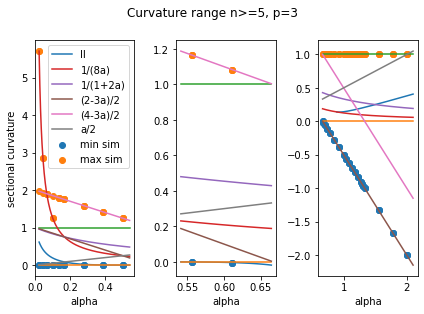}
\caption{Numerical test for curvature range $n>4, p=3$}
\label{fig:stiefel_curv_n_3}
\end{figure}

\begin{figure}
  \centering
\includegraphics[scale=0.4]{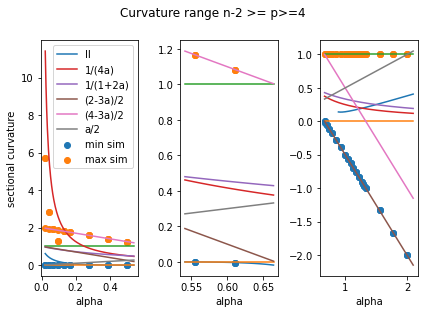}
\caption{Numerical test for curvature range $n-2\geq p\geq 4$. Max, min sims are curvature ranges from numerical optimization.}
\label{fig:stiefel_curv_n_geq4}
\end{figure}
\begin{figure}
  \centering
\includegraphics[scale=0.4]{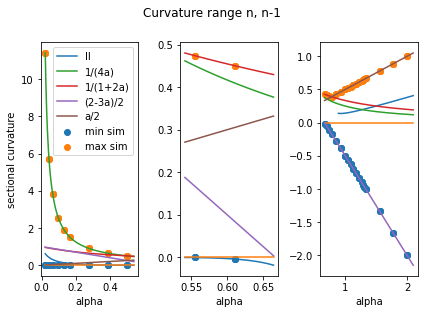}
\caption{Numerical test for curvature range $p=n-1\geq 4$}
\label{fig:stiefel_curv_n_n_1}
\end{figure}
\begin{proposition}\label{prop:p_2} If $p=2$ and $n=3$, then the sectional curvature range of $\St{p}{n}$ is $[\frac{\alpha}{2}, \frac{2-3\alpha}{2}]$ if $\alpha\leq \frac{1}{2}$ and $[\frac{2-3\alpha}{2}, \frac{\alpha}{2}]$ otherwise. In particular, if $\alpha < \frac{2}{3}$, $\St{2}{3}$ has strictly positive sectional curvature.

If $p = 2$ and $n > 3$ then the sectional curvature range is $[0, \frac{4-3\alpha}{2}]$ if $\alpha\leq \frac{2}{3}$, $[\frac{2-3\alpha}{2}, 1]$ if $\frac{2}{3}< \alpha \leq 2$ and $[\frac{2-3\alpha}{2}, \frac{\alpha}{2}]$ if $\alpha > 2$. Hence, when $n>3$, $\St{2}{n}$ has non-negative curvature if $\alpha \leq \frac{2}{3}$.
\end{proposition}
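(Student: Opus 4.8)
The proof divides into $n=3$ and $n>3$ after a common reduction. Since $p=2$, $\oo(2)=\R J$ with $J:=E_{12}-E_{21}$, so $[A_1,A_2]=0$ and $A_i=a_iJ$ for scalars $a_i$. Feeding $[A_1,A_2]=0$ into \cref{eq:sec_sum_sq} and merging the two terms proportional to $\|B_2^{\ft}B_1-B_1^{\ft}B_2\|_F^2$ (their coefficients $\tfrac{\alpha(3-4\alpha)^2}{4}+\tfrac{(1-2\alpha)^3}{2}$ sum to $\tfrac{2-3\alpha}{4}$) recovers \cref{eq:sec_sum_sq_2}:
\[
\hcK=\tfrac{2-3\alpha}{4}\|B_2^{\ft}B_1-B_1^{\ft}B_2\|_F^2+\alpha^2\|B_1A_2-B_2A_1\|_F^2+\tfrac12\|B_1B_2^{\ft}-B_2B_1^{\ft}\|_F^2.
\]
As $\St{2}{n}$ is homogeneous I fix $Y$; replacing $(\xi,\eta)$ by an orthogonal basis of its span leaves $\cK$ unchanged, so I take $\xi\perp\eta$, in which case $S=(2\alpha a_1^2+\|B_1\|_F^2)(2\alpha a_2^2+\|B_2\|_F^2)$. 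The isotropy action $A_i\mapsto R^{\ft}A_iR=\pm A_i$, $B_i\mapsto QB_iR$ with $R\in O(2)$, $Q\in O(n-2)$ then lets me put $B_1$ in normal form and cut the number of free parameters.

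\emph{Case $n=3$.} Here $n-p=1$, so $B_1,B_2$ are row vectors, $B_1B_2^{\ft}$ is a scalar, and the last term of $\hcK$ vanishes; with $d:=\det\begin{bmatrix}B_1\\B_2\end{bmatrix}$ one has $B_2^{\ft}B_1-B_1^{\ft}B_2=-dJ$ and $B_1A_2-B_2A_1=(a_2B_1-a_1B_2)J$, hence $\hcK=\tfrac{2-3\alpha}{2}d^2+\alpha^2\|a_2B_1-a_1B_2\|^2$. Putting $\tilde\xi=(\sqrt{2\alpha}\,a_1,B_1)$ and $\tilde\eta=(\sqrt{2\alpha}\,a_2,B_2)$ in $\R^3$, the Lagrange identity gives $S=\|\tilde\xi\times\tilde\eta\|^2=d^2+2\alpha\|a_2B_1-a_1B_2\|^2$. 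So, with $s:=d^2\ge0$ and $t:=\|a_2B_1-a_1B_2\|^2\ge0$,
\[
\cK=\frac{\tfrac{2-3\alpha}{2}\,s+\alpha^2 t}{s+2\alpha t},
\]
and one checks that every $(s,t)\ne(0,0)$ in the closed first quadrant is realized by some section. This is a M\"obius function of $t/s$, hence monotone (its $t/s$-derivative has the constant sign of $\alpha^2-\alpha(2-3\alpha)=2\alpha(2\alpha-1)$), so its range is the interval between the values at $t=0$ and at $s=0$, namely $\tfrac{2-3\alpha}{2}$ and $\tfrac{\alpha}{2}$. These coincide precisely when $\alpha=\tfrac12$, which gives the stated ranges and strict positivity for $\alpha<\tfrac23$. (The two endpoint sections are exactly the rows of \cref{tab:corners} that remain valid when $n-p=1$.)

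\emph{Case $n>3$.} Now all three terms of $\hcK$ can be nonzero and the sign of $\tfrac{2-3\alpha}{4}$ flips at $\alpha=\tfrac23$, which is what produces the three announced regimes. After the reduction above I would write $\cK$ as a rational function of boundedly many scalars and establish the sharp bounds. For $\alpha\le\tfrac23$ all coefficients are $\ge0$, so $\cK\ge0$, with value $0$ at $A_1=A_2=0$, $B_1=e_{11}$, $B_2=e_{22}$ and maximum $\tfrac{4-3\alpha}{2}$ at $B_1=e_{11}+e_{22}$, $B_2=e_{12}-e_{21}$. For $\tfrac23<\alpha\le2$ the term $\tfrac{2-3\alpha}{4}\|B_2^{\ft}B_1-B_1^{\ft}B_2\|_F^2$ is $\le0$, so $\cK$ dips below $0$; I would show $\tfrac{2-3\alpha}{2}\le\cK\le1$, with the ends attained at $B_1=e_{11},B_2=e_{12}$ and at $B_1=e_{11},B_2=e_{21}$. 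For $\alpha>2$, where $\tfrac{\alpha}{2}>1$ and the mixed term governs the top, the range is $[\tfrac{2-3\alpha}{2},\tfrac{\alpha}{2}]$ with maximum at $A_1=J$, $B_2=e_{11}$ (other data zero). These extremal sections are exactly the $p=2$ rows of \cref{tab:corners}, so only the inequalities remain. The main tool is $\|[X,Z]\|_F^2\le\|X\|_F^2\|Z\|_F^2$ on $\oo(m)$ for $m\ge4$ (usable precisely because $n\ge4$, and failing for $n=3$), applied both to the antisymmetric block matrices built from $\sqrt{2\alpha}A_i$ and $B_i$ as before \cref{tab:corners} — whose squared Frobenius norms are the factors of $S$ and whose commutator has blocks of the same norm as $B_2^{\ft}B_1-B_1^{\ft}B_2$, $B_1A_2-B_2A_1$, $B_1B_2^{\ft}-B_2B_1^{\ft}$ — and to $\check B_i=\begin{bmatrix}0&B_i^{\ft}\\-B_i&0\end{bmatrix}$, which yields $\|B_2^{\ft}B_1-B_1^{\ft}B_2\|_F^2+\|B_1B_2^{\ft}-B_2B_1^{\ft}\|_F^2\le4\|B_1\|_F^2\|B_2\|_F^2$.

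\textbf{Main obstacle.} The hard part is the sharp two-sided bound when $n>3$. The block-matrix inequality by itself only gives $\tfrac14\|B_2^{\ft}B_1-B_1^{\ft}B_2\|_F^2+\alpha\|B_1A_2-B_2A_1\|_F^2+\tfrac14\|B_1B_2^{\ft}-B_2B_1^{\ft}\|_F^2\le S$, whose coefficient pattern $1:4\alpha:1$ matches that of $\hcK$, namely $(2-3\alpha):4\alpha^2:2$, only at $\alpha=0,\tfrac12$; a term-by-term estimate therefore produces only non-tight bounds. Pinning the constants $\tfrac{4-3\alpha}{2},1,\tfrac{\alpha}{2}$ down exactly requires combining this with the purely-$B$ inequality and with the orthogonality relation $2\alpha a_1a_2+\langle B_1,B_2\rangle=0$, which couples the three squared norms to $S$ and forces them to trade off, and then verifying that equality occurs only at the tabulated sections. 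This balancing, together with the Gram-matrix bookkeeping for the four columns of $B_1,B_2$ in $\R^{n-2}$, is where the work concentrates; $n=3$ is painless only because $B_1B_2^{\ft}$ is then scalar, collapsing the problem to the two-variable ratio above.
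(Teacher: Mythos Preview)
Your $n=3$ argument is correct and in fact cleaner than the paper's. The paper parametrizes $A_i=(2\alpha)^{-1/2}c_iJ$, diagonalizes $B_1$, writes $\hcK$ in coordinates, and then checks directly that $\hcK-\tfrac{\alpha}{2}S$ and $\hcK-\tfrac{2-3\alpha}{2}S$ are multiples of $(1-2\alpha)$ with opposite signs. Your Lagrange-identity reduction $S=d^2+2\alpha\|a_2B_1-a_1B_2\|^2$ (which uses orthogonality exactly once, to kill $\langle\tilde\xi,\tilde\eta\rangle$) followed by the M\"obius monotonicity in $t/s$ gets the same interval with less computation. One small quibble: you do not need every $(s,t)\neq(0,0)$ to be realized, only every ratio $t/s\in[0,\infty]$; your argument already gives the two endpoints and continuity fills in the rest.

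For $n>3$ there is a genuine gap. Your intended main tool, the commutator inequality $\|[X,Z]\|_F^2\le\|X\|_F^2\|Z\|_F^2$ applied to the block matrices $X,Z\in\oo(n)$, is precisely what the paper uses to produce a \emph{crude} bound at the start of the section, and the paper explicitly says ``We will attempt to provide more refined bounds.'' You yourself observe that the coefficient pattern it yields matches that of $\hcK$ only at $\alpha=0,\tfrac12$, so it cannot pin down the sharp constants $\tfrac{4-3\alpha}{2}$, $1$, $\tfrac{\alpha}{2}$ across the three regimes. The paper's actual proof does something different and more elementary: after diagonalizing $B_1=\mathrm{diag}(d_1,d_2)$ via the isotropy action, it uses orthogonality $c_1c_2+\Tr B_1B_2^{\ft}=0$ to rewrite the cross term $-2c_1c_2\Tr B_1B_2^{\ft}=c_1^2c_2^2+(\Tr B_1B_2^{\ft})^2$, so that
\[
\alpha^2\|B_1A_2-B_2A_1\|_F^2=\tfrac{\alpha}{2}\bigl(c_2^2\|B_1\|_F^2+c_1^2\|B_2\|_F^2+c_1^2c_2^2+(\textstyle\sum_id_ib_{ii})^2\bigr),
\]
and then writes $\hcK$ and $S$ entirely in the scalar coordinates $c_1,c_2,d_1,d_2,b_{ij}$. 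Each bound ($\hcK\le\tfrac{4-3\alpha}{2}S$ for $\alpha\le\tfrac23$; $\hcK\ge\tfrac{2-3\alpha}{2}S$ and $\hcK\le S$ for $\tfrac23<\alpha\le2$; $\hcK\le\tfrac{\alpha}{2}S$ for $\alpha>2$) is then a termwise comparison using only ordinary Cauchy--Schwarz on sums like $(b_{21}d_2-b_{12}d_1)^2$, $(b_{21}d_1-b_{12}d_2)^2$ and $(\sum d_ib_{ii})^2$. The block-commutator route you propose never isolates these specific quadratic forms, which is why it overshoots; the missing idea is the orthogonality substitution that converts the $A$-$B$ mixed term into a sum of coordinate squares compatible with the factored form of $S$.
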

\begin{proof}When $p = 2$, $\oo(2)$ is one dimension so $[A_1, A_2] = 0$ and we can set $A_1 = (2\alpha)^{-1/2}c_1 J, A_2 = (2\alpha)^{-1/2}c_2 J$ for $J = \begin{bmatrix}0 &1 \\ -1 & 0\end{bmatrix}$, with $c_1, c_2\in \R$. Further, for two orthogonal matrices in $U, V$ of compatible dimensions, the sectional curvature is unchanged if we replace $(A_1, B_1, A_2, B_2)$ with $(VA_1V^{\ft},UB_1V, VA_2V^{\ft}, UB_2V)$. Thus, we can assume $B_1$ is rectangular diagonal, with diagonal entries denoted by $d_i$, $1\leq i\leq \min(n-p, p)$. We denote entries of $B_2$ by $b_{ij}$, $1\leq i\leq n-p, 1\leq j\leq p$. We note $B_1A_2-B_2A_1 = (2\alpha)^{-1/2}(c_2B_1J - c_1B_2J)$, and since $JJ^{\ft} = \dI_2$, $\alpha^2\|B_1A_1-B_2A_2\|_F^2 = \alpha/2(c_2^2\Tr(B_1B_1^{\ft}) + c_1^2\Tr(B_2B_2^{\ft}) - 2c_1c_2\Tr(B_1B_2^{\ft}))$. The orthogonal condition $\alpha\Tr A_1A_2^{\ft} + \Tr B_1B_2^{\ft} = 0$ implies $c_1c_2 + \Tr B_1B_2^{\ft} = c_1c_2 + \sum_{i=1}^{\min(p, n-p)} d_ib_{ii}=0$, or $c_1c_2 = -\Tr B_1B_2^{\ft}$, so $-2c_1c_2\Tr B_1B_2^{\ft} = c_1^2c_2^2+(\Tr B_1B_2)^2$. This implies
$$\alpha^2\|B_1A_1-B_2A_2\|_F^2 = \alpha/2(c_2^2\Tr(B_1B_1^{\ft}) + c_1^2\Tr(B_2B_2^{\ft}) + c_1^2c_2^2 + \Tr(B_1B_2^{\ft})^2)$$
    For the case $n=3$, from \cref{eq:sec_sum_sq_2}, the curvature numerator $\hcK$ is reduced to
    $$\frac{2-3\alpha}{2}b_{12}^2d_1^2 + \frac{\alpha}{2}(c_2^2d_1^2 + c_1^2(b_{11}^2+b_{12}^2) + c_1^2c_2^2 + d_1^2b_{11}^2)$$
    and the curvature denominator is $S=(c_1^2 + d_1^2)(c_2^2 + b_{11}^2+b_{12}^2)$. We have $\hcK - \alpha/2S = (1-2 \alpha)b_{12}^2d_1^2$, $\hcK -(1-3\alpha/2)S = (2\alpha - 1)(c_2^2d_1^2 + c_1^2(b_{11}^2+b_{12}^2) + c_1^2c_2^2 + d_1^2b_{11}^2)$. Thus, the signs of the differences are dependent on $1-2\alpha$, and $\hcK$ is between the smaller and the larger of $\alpha/2S$ and $(1-3\alpha/2)S$. The bound is tight based on \cref{tab:corners}.

    When $n > 3$, the denominator is $S=(c_1^2 + \sum_{i=1}^2 d_i^2)(c_2^2 + \sum_{ij}b^2_{ij})$. $B_1$ consists of a square diagonal block of size $2\times 2$ and the remaining zero block of size $(n-4)\times 2$. Expand $\|B_1B_2^{\ft} - B_2B_1^{\ft}\|_F^2$ by dividing $B_2$ to a square block corresponding to indices not exceeding two, which contributes $2(b_{21}d_1 - b_{12}d_2)^2$ and the remaining blocks, which contributes $2\sum_{j=1}^2\sum_{i>2}b_{ij}^2d_j^2$, $\hcK$ is
    $$\begin{gathered}
      \frac{2-3\alpha}{2}(b_{21}d_2-b_{12}d_1)^2 + \frac{\alpha}{2}(c_2^2\sum d_i^2 + c_1^2\sum_{ij}b_{ij}^2 + c_1^2c_2^2 + (\sum_{i=1}^2 d_i b_{ii})^2)\\
      + (b_{21}d_1-b_{12}d_2)^2 +\sum_{j=1}^2\sum_{i>2}b_{ij}^2d_j^2
    \end{gathered}$$
    The above expression shows when $\alpha \leq 2/3$, $\cK \geq 0$. In this case, $1\leq 2-3\alpha/2$, $\alpha/2\leq 2-3\alpha/2$, thus $\sum_{j=1}^2\sum_{i>2}b_{ij}^2d_j^2\leq (2-3\alpha/2)\sum_{i=1}^2 d_i^2\sum_{i>2}b^2_{ij}$ and
    $$\frac{\alpha}{2}(c_2^2\sum d_i^2 + c_1^2\sum_{ij}b_{ij}^2 + c_1^2c_2^2)\leq \frac{4-3\alpha}{2}(c_2^2\sum d_i^2 + c_1^2\sum_{ij}b_{ij}^2 + c_1^2c_2^2)$$

    To show $\hcK\leq (2-3\alpha/2)S$, we only need to show
    $$\frac{2-3\alpha}{2}(b_{21}d_2-b_{12}d_1)^2 + (b_{21}d_1-b_{12}d_2)^2+\frac{\alpha}{2}(\sum_{i=1}^2 d_i b_{ii})^2\leq (2-\frac{3\alpha}{2})\sum_{k=1}^2d_k^2\sum_{i\leq2}b_{ij}^2 $$
This follows from Cauchy-Schwarz's theorem, applying to three different combinations on the left-hand side then sum up the inequalities, as the first two terms on the left-hand side are dominated by $((2-3\alpha)/2 +1)(d_1^2+d_2^2)(b_{21}^2+b_{12}^2)$, while the last one is dominated by $\alpha/2(d_1^2+d_2^2)(b_{11}^2 + b_{22}^2)\leq (2-3\alpha/2)(d_1^2+d_2^2)(b_{11}^2 + b_{22}^2)$.
    
Next, when $\alpha > 2/3$, by Cauchy-Schwarz, $\hcK\geq (1-3\alpha/2)(b_{21}^2+b_{12}^2)(d_1^2+d_2^2) \geq (1-3\alpha/2)S$, as $1-3\alpha/2 < 0$. When $2/3 < \alpha \leq 2$, $\alpha/2 \leq 1$, thus $\cK\leq S$, as the first term of $\hcK$ is negative, while we can use Cauchy-Schwarz on $(\sum d_ib_{ii})^2$ and $(b_{21}d_1 - b_{12}d_2)^2$ as before. Finally, for $\alpha > 2$, $\hcK\leq \alpha/2 S$, again because the first term of $\hcK$ is negative, while the remaining terms are dominated by the corresponding terms in $\alpha/2S$, using Cauchy-Schwarz if necessary. Again, the bounds are tight using \cref{tab:corners}.
\end{proof}
We note $\St{2}{3}$ is $\SOO(3)$, and could be considered as the sphere $S^3$ with antipodal points identified (via the quaternion representation, for example). From the formula for the metric, we see this is the projective version of the Berger sphere.
\begin{proposition} For $p\geq 3$, the sectional curvature range of $\St{p}{n}$ contains an interval $I = I(n, p, \alpha)$ as described in \cref{tab:sec_range}. The first row describes the applicable combination of $(n, p)$, the columns labeled $\alpha_u$ specify the range of $\alpha$ where the interval formula next to it is applicable. The interval is applicable for $\alpha$  greater than the previous $\alpha_u$ (if exists) and not exceeding the current $\alpha_u$.
  \begin{table}[!ht]
  \begin{tabular}{|c c |c c| c c| c c|}
    \hline
\multicolumn{2}{|c|}{$(n=4, p=3)$} & \multicolumn{2}{|c|}{$(n, 3),n\geq 5$} & \multicolumn{2}{|c|}{$(n, p),n-2\geq p\geq 4$} & \multicolumn{2}{|c|}{$(n, n-1), n \geq 5$}\\
\hline
$\alpha_u$ & I &$\alpha_u$ & I &$\alpha_u$ & I &$\alpha_u$ & I \\
\hline
$\frac{1}{6}$&$[0, \frac{1}{8\alpha}]$ &$\frac{4-\sqrt{13}}{6}$ &$[0, \frac{1}{8\alpha}]$ &$\frac{4-\sqrt{10}}{6}$ &$[0, \frac{1}{4\alpha}]$ &$1/2$ & $[0,\frac{1}{4\alpha}]$\\
$1/2$& $[0, \frac{1}{1+2\alpha}]$ &$1/2$ &$[0, \frac{4-3\alpha}{2})]$ &$1/2$ &$[0, \frac{4-3\alpha}{2}]$ &$\frac{7}{10}$ & $[\frl(\alpha), \frac{1}{1+2\alpha}]$ \\
$\frac{7}{10}$ &  $[\frl(\alpha), \frac{1}{1+2\alpha}]$ &$\frac{2}{3}$ &$[\frl(\alpha), \frac{4-3\alpha}{2}]$ &$\frac{2}{3}$ &$[\frl(\alpha), \frac{4-3\alpha}{2}]$ &$\frac{\sqrt{17}-1}{4}$ & $[\frac{2-3\alpha}{2},\frac{1}{1+2\alpha}]$ \\
$\frac{\sqrt{17}-1}{4}$& $[\frac{2-3\alpha}{2}, \frac{1}{1+2\alpha}]$&$\frac{7}{10}$ &$[\frl(\alpha), 1]$ &$\frac{7}{10}$ &$[\frl(\alpha), 1]$ & $\infty$&$[\frac{2-3\alpha}{2}, \frac{\alpha}{2}]$\\
$\infty$ & $[\frac{2-3\alpha}{2}, \frac{\alpha}{2}]$ &$2$ &$[\frac{2-3\alpha}{2}, 1]$  &$2$ &$[\frac{2-3\alpha}{2}, 1]$ & & \\
& &$\infty$ &$[\frac{2-3\alpha}{2},\frac{\alpha}{2}]$ &$\infty$ &$[\frac{2-3\alpha}{2}, \frac{\alpha}{2}]$ & &\\
\hline
  \end{tabular}
  \caption{Interval contained in the sectional curvature range of the Stiefel manifold $\St{p}{n}$ with metric defined by $\alpha$. $\frl(\alpha) = \frc(\gamma_{\min})$ with $\frc$ defined in \cref{eq:frc}, and $\gamma_{\min}$ in \cref{eq:gammamin}.}
  \label{tab:sec_range}
\end{table}    
\end{proposition}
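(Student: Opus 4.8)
The plan is to reduce the statement to a short list of explicit section computations. By homogeneity of $\St{p}{n}$ the sectional curvature range is independent of the base point, and by the scaling $(\alpha_0,\alpha_1)\mapsto(c\alpha_0,c\alpha_1)$ we may take $\alpha_0=1$, $\alpha_1=\alpha$. Since $\cK(\xi,\eta)$ depends only on the $2$-plane spanned by $\xi,\eta$, it is a continuous function on the Grassmannian $\mathrm{Gr}(2,m)$ of $2$-planes in $T_Y\St{p}{n}$, where $m=\tfrac{p(p-1)}{2}+p(n-p)\ge 6$ for $p\ge 3$; this Grassmannian is connected and compact, so the sectional curvature range is a closed bounded interval. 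Hence to show the range contains $I=[a(\alpha),b(\alpha)]$ it suffices to produce one section with $\cK=a(\alpha)$ and one with $\cK=b(\alpha)$. Every endpoint appearing in \cref{tab:sec_range} is, by construction, the sectional curvature at one of the representative sections collected in \cref{tab:corners}, so the proposition reduces to: (i) the curvature values recorded in \cref{tab:corners} are correct, and the ``condition'' column states exactly when that section is defined; (ii) within each $\alpha$-subinterval of each column of \cref{tab:sec_range}, the two tabulated endpoints are the minimum and the maximum of those representative values that are available there.

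For (i) I would substitute each quadruple $(A_1,B_1,A_2,B_2)$ from \cref{tab:corners} into \cref{eq:sec_sum_sq} (or into \cref{eq:sec_sum_sq_2} when $A_1$ or $A_2$ vanishes, or directly into \cref{eq:sec_cur}) to obtain the numerator $\hcK$, and into the product $(\alpha\|A_1\|_F^2+\|B_1\|_F^2)(\alpha\|A_2\|_F^2+\|B_2\|_F^2)$ for the denominator; for each listed section one checks $\langle\xi,\eta\rangle_{\sfg}=\alpha\Tr(A_1A_2^{\ft})+\Tr(B_1B_2^{\ft})=0$, so the sectional-curvature denominator is indeed that product (the pairs in the table are designed to be orthogonal). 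The ``condition'' column then records the inequalities on $(n,p)$ needed for the prescribed elementary matrices $E_{ij},e_{ij}$ to exist, and one checks these are implied by the hypotheses of the matching row of \cref{tab:sec_range}. The only step that is not immediate is the family $A_1=E_{12}-E_{21}$, $A_2=E_{23}-E_{31}$, $B_1=\gamma^{1/2}e_{11}$, $B_2=\gamma^{1/2}e_{13}$: here \cref{eq:sec_sum_sq_2} gives $\cK=\frc(\gamma)$ of \cref{eq:frc}, and I would differentiate the rational function $\frc$, identify its unique positive critical point $\gamma_{\min}(\alpha)$ of \cref{eq:gammamin}, verify it is a minimum and positive on the range of $\alpha$ where this section is used, and set $\frl(\alpha)=\frc(\gamma_{\min}(\alpha))$, noting that $\alpha=\tfrac{7}{10}$ is a removable singularity with value $\lim_{\gamma\to\infty}\frc(\gamma)=-\tfrac{1}{20}$, which makes $\frl$ continuous and equal to $\tfrac{2-3\alpha}{2}$ there.

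For (ii) I would treat the four $(n,p)$-columns separately and, in each, compare the finitely many available candidate functions of $\alpha$ on the real line. The switch points $\alpha_u$ are exactly the roots of the pairwise equalities among these functions: $\tfrac{1}{8\alpha}=\tfrac{1}{1+2\alpha}$ at $\tfrac16$; $\tfrac{1}{8\alpha}=\tfrac{4-3\alpha}{2}$ at $\tfrac{4-\sqrt{13}}{6}$; $\tfrac{1}{4\alpha}=\tfrac{4-3\alpha}{2}$ at $\tfrac{4-\sqrt{10}}{6}$; $\tfrac{4-3\alpha}{2}=1$ at $\tfrac23$; $\tfrac{1}{1+2\alpha}=\tfrac{\alpha}{2}$ at $\tfrac{\sqrt{17}-1}{4}$; $\tfrac{\alpha}{2}=1$ at $2$; together with the transition of the lower endpoint from $0$ to $\frl(\alpha)$ at $\tfrac12$ --- using $\frl(\tfrac12)=0$ and the fact from \cref{prop:stiefel_cur} that $\cK\ge 0$ for $\alpha\le\tfrac12$, which pins the lower endpoint at $0$ there, realized by $A_1=A_2=0,\ B_1=e_{11},\ B_2=e_{22}$ --- and from $\frl(\alpha)$ to $\tfrac{2-3\alpha}{2}$ at $\tfrac{7}{10}$. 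With these in hand one reads off the monotonicities together with the elementary inequalities $\frl(\alpha)<\tfrac{2-3\alpha}{2}$ for $\alpha<\tfrac{7}{10}$, $\tfrac{2-3\alpha}{2}\le\tfrac{1}{1+2\alpha}$ for $\alpha\ge\tfrac16$, and $\tfrac{2-3\alpha}{2}\le\tfrac{\alpha}{2}$ for $\alpha\ge\tfrac12$, to conclude that the tabulated pair is the min and the max (in particular $a(\alpha)\le b(\alpha)$) on each subinterval. The main obstacle is precisely this bookkeeping: there are many representative sections and four column cases, and one must ensure that the ``condition'' restrictions of \cref{tab:corners} never delete a candidate on the very subinterval where it should be extremal --- this is why $p\le n-2$ versus $p=n-1$ must be separated (whether the sections giving $\tfrac{4-3\alpha}{2}$ and $1$ are allowed) and why $p=3$ versus $p\ge 4$ must be separated (whether the $p\ge 4$ section realizing $\|[A_1,A_2]\|_F^2=\|A_1\|_F^2\|A_2\|_F^2$, giving $\tfrac{1}{4\alpha}$, is allowed), and why the switch at $\alpha=\tfrac{7}{10}$ has to be routed through the continuity of $\frl$ arranged in (i). Beyond \cref{prop:stiefel_cur} and the connectedness of $\mathrm{Gr}(2,m)$, no new geometric input is required.
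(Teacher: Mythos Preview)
Your proposal is correct and follows essentially the same route as the paper: verify that each endpoint of $I$ is realized by a section from \cref{tab:corners} applicable for that $(n,p)$, then use connectedness of the Grassmannian together with $a(\alpha)\le b(\alpha)$ to conclude. Your step (ii) does slightly more than is strictly required---one only needs $a(\alpha)\le b(\alpha)$, not that these are the min and max over all candidate values---but the paper likewise devotes most of its proof to this same bookkeeping of the switch points and the behavior of $\frl$, so the two arguments match closely.
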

To illustrate, with $(n, p) = (4, 3)$, for $\alpha\leq \frac{1}{6}$, the sectional curvature range contains the interval $[0, \frac{1}{8\alpha}]$, for $\frac{1}{6}<\alpha\leq \frac{1}{2}$, it contains the interval $[0, \frac{1}{1+2\alpha}]$, etc. In the final row, for $\alpha > \frac{\sqrt{17}-1}{4}$, it contains the interval $[\frac{2-3\alpha}{2}, \frac{\alpha}{2}]$.
\begin{proof}It is straightforward to check that for each pair $(n, p)$ in \cref{tab:sec_range}, the values indicated correspond to a quadruple $(A_1, B_1, A_2, B_2)$ in \cref{tab:corners}, which is applicable for the pair. For example, in the case $(n, p) = (4, 3)$, the only applicable values from \cref{tab:corners} are $0$ (from the first row), $\frac{1}{2\alpha+1}$, $\frac{1}{8\alpha}$, $\frl(\alpha)$ and $\frac{2-3\alpha}{2}$. To show the sectional curvature range contains $I$, it remains to verify the lower end of $I$ is not greater than the upper end, which is immediate, as $\frl(\alpha)$ is negative between $0$ and $\frac{7}{10}$, and $\frac{2-3\alpha}{2}$ is negative for $\alpha>\frac{7}{10}>\frac{2}{3}$.

  The graphs in figures \ref{fig:stiefel_curv_4_3}, \ref{fig:stiefel_curv_n_3}, \ref{fig:stiefel_curv_n_geq4}, \ref{fig:stiefel_curv_n_n_1} display the relative values of these functions. As all the functions involved are simple algebraic functions, except for $\frl$, if we can assess the contribution of $\frl$, it will be easy to check that the lower end of $I$ corresponds to the smallest value among the applicable values, and the upper to the largest of the applicable values. The function $\gamma_{\min}$ from \cref{eq:gammamin} has a root at $\alpha_s=\frac{3+\sqrt{17}}{8}$ at around $0.89$, and is negative in the interval $(\frac{7}{10},\alpha_s)$, hence $\sqrt{\gamma_{\min}}$ and $B_1, B_2$ for this section are not defined, so $\frl(\alpha)$ cannot be an extremum for $\alpha \in (\frac{7}{10},\alpha_s)$. In the interval $[\alpha_s, 2]$, $\frl$ has the approximate range of $[0.14, 0.38]$, less than $1$, and in the interval $[\alpha_s, \frac{\sqrt{17}-1}{4}]$ it is less than $\frac{1}{1+2\alpha}$. For large $\alpha$, $\gamma_{\min}$ is approximated by $0.8\alpha$, thus $\frl(\alpha)$ has an asymptote with slope $\frac{4\times 0.8 - 3\times 0.8^2/2}{2.8^2}\approx 0.286$, smaller than the slope of $\frac{\alpha}{2}$. It is also easy to graph $\frl$ in the interim to show beyond the contribution to the lower bound in $[1/2, \frac{7}{10}]$, $\frl$ has no other effect on the curvature range.
  
With that analysis, for the case $(n, p) = (4, 3)$, the only applicable values from \cref{tab:corners} are $0$ (from the first row), $\frac{1}{2\alpha+1}$, $\frac{1}{8\alpha}$, $\frl(\alpha)$ and $\frac{2-3\alpha}{2}$. If $\alpha < 1/2$, all these functions are non-negative, and thus $0$ is the smallest value among them. When $\frac{1}{2} <\alpha < \frac{7}{10}$, $\frl(\alpha)$ is negative, and in the interval $[\frac{2}{3}, \frac{7}{10}]$, $\frac{2-3\alpha}{2}$ is also negative, but $\frl(\alpha)$ is the lesser of the two, while we have discussed $\frl(\alpha)$ has no effect for $\alpha>\frac{7}{10}$. Thus, for $\alpha >\frac{7}{10}$ the upper end of $I$ is $\max(\frac{1}{1+2\alpha}, \frac{\alpha}{2})$, with the break-even point $\frac{\sqrt{17}-1}{4}$. In general, consider the upper or lower ends of $I$ as functions of $\alpha$, the values in column $\alpha_u$ corresponds to nonsmooth points of these functions or infinity.
  
For the case $n\geq 5, p = n-1$, $(0, \frac{1}{4\alpha}, \frac{1}{8\alpha}, \frac{1}{2\alpha+1}, \frl(\alpha), \frac{2-3\alpha}{2}, \frac{\alpha}{2})$ are the applicable curvature values. Again, with $\frl$ having only an effect in $[\frac{2}{3}, \frac{7}{10}]$, it is straightforward to verify the piecewise smooth function $\max(0, \frac{1}{4\alpha},\frac{1}{8\alpha}, \frac{1}{2\alpha+1}, \frl(\alpha),\frac{2-3\alpha}{2}, \frac{\alpha}{2})$ has the form corresponding to the upper end of $I$, and the lower end corresponding to the minimum of those functions, for $\alpha > \frac{7}{10}$.  We address the case $p\geq n-2$ similarly.
\end{proof}  

For $\alpha=\frac{1}{2}$, when $p=n-1, n\geq 4$, the range contains $[0, \frac{1}{2}]$, and it could be shown to be exactly $[0, \frac{1}{2}]$ as the manifold is isometric to $\SOO(n)$ with a bi-invariant metric. If $2\leq p\leq n-2, n\geq 5$, the range contains $[0, 2-3\alpha/2] = [0, 5/4]$, which is proved to be the exact range in \cite{Rentmee}. For $\alpha=1$, the interval is $[-1/2, 1]$. From the numerical evidence mentioned, this seems to be tight. We note for $p\geq 3$, both when $\alpha$ is large or $\alpha$ is small, the curvature range becomes large.

\section{Deformation metrics on normal homogeneous manifolds}\label{sec:deform}
For a Lie group $\ttG$, with $U\in\ttG$, we will denote by $\cL_U$ the left-multiplication map and by $d\cL_U$ its differential. As usual, $\ad_A$ denotes the operator $X\mapsto [A, X]$ on the Lie algebra $\frg$ of $\ttG$ ($A, X\in\frg$). We recall a few results on curvatures of Lie groups. 
\begin{proposition}\label{prop:curv_general}Let $\ttG$ be a connected Lie group with Lie algebra $\frg$ with a  left-invariant metric given by an inner product $\langle\rangle_{\rP}$ on $\frg$. For $A\in\frg$, let $\ad_A^{\dagger}$ be the adjoint of $\ad_A$ under $\langle\rangle_{\rP}$, that means $\addg_A$ is a linear operator on $\frg$ such that $\langle[A, A_1], A_2\rangle_{\rP} = \langle A_1, \addg_AA_2\rangle_{\rP}$. Define
  \begin{equation}[A,B]_{\rP} = [A,B] -\addg_AB -\addg_BA
\end{equation}
  Let $\nabla^{\ttG}$ be the Levi-Civita connection on $\ttG$. For two vector fields $\ttX, \ttY$ on $\ttG$, there exists $\frg$-valued functions $A(U), B(U)$, $U\in\ttG$ such that $\ttX(U) = d\cL_UA(U), \ttY(U) = d\cL_UB(U)$. We have
  \begin{equation}\label{eq:nabla_GXY}
    (\nabla^{\ttG}_{\ttX}\ttY)(U) = d\cL_U((\rD_{\ttX}B)(U) + \frac{1}{2}[A(U), B(U)]_{\rP})
  \end{equation}
  where $\rD_{\ttX}B$ is the Lie-derivative of the $\frg$-valued function $B$ by the vector field $\ttX$.
  
For $\omega_1, \omega_2, \omega_3\in\frg$, the curvature of $\ttG$ at the identity is given by
  \begin{equation}\label{eq:group_curv}
\rR^{\ttG}_{\omega_1, \omega_2}\omega_3 = \frac{1}{2}[[\omega_1, \omega_2], \omega_3]_{\rP} - \frac{1}{4}[\omega_1[\omega_2, \omega_3]_ {\rP}]_{\rP} +\frac{1}{4}[\omega_2[\omega_1, \omega_3]_{\rP}]_{\rP}
\end{equation}
Let $\frk$ be a subalgebra of $\frg$ such that $\rP$ is $\ad(\frk)$-invariant, $\langle [A, K], B \rangle_{\rP} + \langle A, [K, B] \rangle_{\rP} = 0$ for $K\in\frk, A, B\in\frg$, and $\frk$ corresponds to a closed subgroup $\ttK\subset\ttG$, such that $\ttK$ acts freely and properly on $\ttG$ by isometries under right multiplication and $\ttG/\ttK$ is a homogeneous space. If $\frg = \frk\oplus\frm$ is an orthogonal decomposition under $\langle\rangle_{\rP}$, then the horizontal lift of the curvature of $\ttM =\ttG/\ttK$ at $o$, the equivariant class containing the unit of $\ttG$, evaluated at three horizontal vectors $\omega_1, \omega_2, \omega_3\in\frm$ is
  \begin{equation}\label{eq:hom_curv}
    \begin{gathered}
    \rR^{\ttM}_{\omega_1, \omega_2}\omega_3 = (\frac{1}{2}[[\omega_1, \omega_2], \omega_3]_{\rP} - \frac{1}{4}[\omega_1[\omega_2, \omega_3]_ {\rP}]_{\rP} +\frac{1}{4}[\omega_2[\omega_1, \omega_3]_{\rP}]_{\rP}\\
    +\frac{1}{2}\addg_{\omega_3}[\omega_1, \omega_2]_{\frk} -  \frac{1}{4}\addg_{\omega_1}[\omega_2, \omega_3]_{\frk} +     \frac{1}{4}\addg_{\omega_2}[\omega_1, \omega_3]_{\frk}
    )_{\frm}
    \end{gathered}
  \end{equation}  
  Here, $\omega_{\frv}$ denotes the orthogonal projection of $\omega$ to $\frv$ for an element $\omega\in\frg$ and a subspace $\frv$ of $\frg$. Also, given two vector fields $\ttX, \ttY$ on $\ttM$, which lift to horizontal vector fields $\bar{\ttX}, \bar{\ttY}$ on $\ttG$, with $\bar{\ttX}(U) = d\cL_UA(U), \bar{\ttY}=d\cL_UB(U)$ for two $\frg$-valued functions $A(U), B(U)$ on $\ttG$ then the horizontal lift of $\nabla_{\ttX}\ttY$ is given by
  \begin{equation}\label{eq:nabla_hom}
    \overline{\nabla_{\ttX}\ttY(U)} = d\cL_U((\rD_{\bar{\ttX}}B)(U) +\frac{1}{2}[A(U), B(U)]_{\rP})_{\frm}
\end{equation}    
\end{proposition}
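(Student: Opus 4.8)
The plan is to establish the four displayed formulas in the order \cref{eq:nabla_GXY,eq:group_curv,eq:nabla_hom,eq:hom_curv}, the genuine content being the last. For \cref{eq:nabla_GXY}: in the left-trivialized frame a vector field $\ttY$ is a $\frg$-valued function $B(U)$, and since $\nabla^{\ttG}$ is $C^{\infty}$-linear in the first slot and a derivation in the second, it suffices to compute $\nabla^{\ttG}_{\ttX}$ of a left-invariant field, the variation of the coordinates of $B$ producing the $\rD_{\ttX}B$ term. For left-invariant $A_0,B_0,C_0$ every $U\langle\cdot,\cdot\rangle$-term in the Koszul formula drops out because the inner products are constant in this frame, leaving $2\langle\nabla^{\ttG}_{A_0}B_0,C_0\rangle = \langle[A_0,B_0],C_0\rangle - \langle[A_0,C_0],B_0\rangle - \langle[B_0,C_0],A_0\rangle$; rewriting the last two terms with the definition of $\addg$ gives $\nabla^{\ttG}_{A_0}B_0 = \tfrac12[A_0,B_0]_{\rP}$, hence \cref{eq:nabla_GXY}. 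For \cref{eq:group_curv}: evaluate $\rR^{\ttG}_{\ttX\ttY}\ttZ = \nabla_{[\ttX,\ttY]}\ttZ - \nabla_{\ttX}\nabla_{\ttY}\ttZ + \nabla_{\ttY}\nabla_{\ttX}\ttZ$ on left-invariant $\omega_1,\omega_2,\omega_3$; then $[\ttX,\ttY]$ is the left-invariant field $[\omega_1,\omega_2]$, each $\nabla_{\omega_i}\omega_j = \tfrac12[\omega_i,\omega_j]_{\rP}$ is again left-invariant, and a second application of \cref{eq:nabla_GXY} (now with no $\rD$-term) yields \cref{eq:group_curv}. This is the Euler--Poisson--Arnold computation of \cite{Arnold1966}.

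For \cref{eq:nabla_hom}: the hypotheses make $\pi\colon\ttG\to\ttM=\ttG/\ttK$ a Riemannian submersion for the quotient metric; in the left-trivialization the vertical space at $U$ is $d\cL_U\frk$ and, as $\rP$ is left-invariant with $\frg=\frk\oplus\frm$ being $\rP$-orthogonal, the horizontal space is $d\cL_U\frm$, so horizontal projection is just projection $\frg\to\frm$. By O'Neill's submersion equations, for basic horizontal lifts $\bar\ttX=d\cL_{\cdot}A$, $\bar\ttY=d\cL_{\cdot}B$ (with $A,B$ being $\frm$-valued) of vector fields $\ttX,\ttY$ on $\ttM$, the horizontal lift of $\nabla^{\ttM}_{\ttX}\ttY$ is the horizontal part of $\nabla^{\ttG}_{\bar\ttX}\bar\ttY$, the vertical part being the $A$-tensor $\tfrac12\mathcal V[\bar\ttX,\bar\ttY]$; combined with \cref{eq:nabla_GXY} this is \cref{eq:nabla_hom}.

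The main step is \cref{eq:hom_curv}, for which I would use O'Neill's curvature equation for $\pi$ (in the form used by \cite{Michor2007}): for basic horizontal lifts of vector fields $\ttX,\ttY,\ttZ,\ttV$ on $\ttM$,
$$\langle\rR^{\ttM}_{\ttX\ttY}\ttZ,\ttV\rangle = \langle\rR^{\ttG}_{\bar\ttX\bar\ttY}\bar\ttZ,\bar\ttV\rangle + 2\langle A_{\bar\ttX}\bar\ttY,A_{\bar\ttZ}\bar\ttV\rangle - \langle A_{\bar\ttY}\bar\ttZ,A_{\bar\ttX}\bar\ttV\rangle + \langle A_{\bar\ttX}\bar\ttZ,A_{\bar\ttY}\bar\ttV\rangle,$$
with $A_EF=\tfrac12\mathcal V[E,F]$ for horizontal $E,F$. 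Evaluate at the identity with $\bar\ttX_e=\omega_1,\ \bar\ttY_e=\omega_2,\ \bar\ttZ_e=\omega_3,\ \bar\ttV_e=\phi\in\frm$; since $\rR^{\ttG}$ and $A$ are tensors, compute both using the left-invariant horizontal fields $\tilde\omega_i$ (horizontal because $\omega_i\in\frm$). The first term is then $\langle\rR^{\ttG}_{\omega_1\omega_2}\omega_3,\phi\rangle$ from \cref{eq:group_curv}, and $A_{\omega_i}\omega_j|_e=\tfrac12 d\cL_e([\omega_i,\omega_j]_{\frk})$, so $\langle A_{\omega_i}\omega_j,A_{\omega_k}\phi\rangle=\tfrac14\langle[\omega_i,\omega_j]_{\frk},[\omega_k,\phi]_{\frk}\rangle_{\rP}=\tfrac14\langle[\omega_i,\omega_j]_{\frk},[\omega_k,\phi]\rangle_{\rP}=\tfrac14\langle\addg_{\omega_k}[\omega_i,\omega_j]_{\frk},\phi\rangle_{\rP}$ (using $[\omega_i,\omega_j]_{\frk}\perp\frm$ and the definition of $\addg$). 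The three $A$-tensor terms therefore combine into $\langle\tfrac12\addg_{\omega_3}[\omega_1,\omega_2]_{\frk}-\tfrac14\addg_{\omega_1}[\omega_2,\omega_3]_{\frk}+\tfrac14\addg_{\omega_2}[\omega_1,\omega_3]_{\frk},\phi\rangle$, and adding the $\frm$-projection of \cref{eq:group_curv} and letting $\phi$ range over $\frm$ gives \cref{eq:hom_curv}. The main obstacle is fixing the precise coefficients and signs in the O'Neill equation under the present curvature convention (the one for which $\hcK$ is the sectional-curvature numerator); once that is done everything reduces to the tensorial identities above, and as a cross-check one can instead iterate \cref{eq:nabla_hom} directly, carefully tracking the $\frk$-components that each horizontal projection discards.
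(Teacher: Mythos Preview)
Your proposal is correct and follows essentially the same route as the paper: derive the Levi--Civita connection on $\ttG$ in the left-trivialized frame (you use the Koszul formula, the paper instead verifies metric compatibility via the identity $\langle[A,B]_{\rP},C\rangle_{\rP}+\langle B,[A,C]_{\rP}\rangle_{\rP}=0$ and torsion-freeness via $[A,B]_{\rP}-[B,A]_{\rP}=2[A,B]$, which is the same content), obtain \cref{eq:group_curv} by plugging left-invariant fields into the curvature definition, and then apply O'Neill's submersion formulas---computing the $A$-tensor as $\tfrac12[\cdot,\cdot]_{\frk}$ and rewriting the three $A$-tensor inner products as $\addg$-terms exactly as you do. Your caution about sign conventions is the only loose end; once you fix the O'Neill formula to the paper's curvature convention $\rR_{\ttX\ttY}\ttZ=\nabla_{[\ttX,\ttY]}\ttZ-\nabla_{\ttX}\nabla_{\ttY}\ttZ+\nabla_{\ttY}\nabla_{\ttX}\ttZ$, the coefficients you wrote are correct.
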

Note that  in general $[\quad]_{\rP}$ is not anticommutative, as the term $\addg_AB +\addg_BA$ is commutative, and we have $[A,B]_{\rP} - [B,A]_{\rP} = 2[A, B]$.
\begin{proof}First, we note for three $\frg$-valued functions A, B, C
  $$\begin{gathered}\langle [A, B]_{\rP}, C\rangle_{\rP} + \langle B, [A, C]_{\rP} \rangle_{\rP} = \langle[A, B], C \rangle_{\rP} - \langle B,[A, C] \rangle_{\rP} - \langle A,[B, C] \rangle_{\rP}\\
    + \langle B, [A, C] \rangle_{\rP}  - \langle [A, B], C \rangle_{\rP} - \langle [C, B], A \rangle_{\rP} =0
  \end{gathered}$$
  For each smooth function $F:\ttG\to\sfg$, denote by $\cL[F]$ the vector field $U\mapsto d\cL_UF(U)$. Denote by $\langle\rangle_{\ttG}$ the left-invariant metric induced by $\rP$.
  For three vector fields $\ttX, \ttY, \ttZ$ with $\ttX =  \cL[A], \ttY =  \cL[B]$ and $\ttZ =  \cL[C]$ with three smooth $\frg$-valued functions $A, B, C$, we have 
  $$\begin{gathered}\rD_{\ttX}\langle \ttY, \ttZ\rangle_{\ttG} = \rD_{\ttX}\langle B, C\rangle_{\rP} = \langle \rD_{\ttX}B, C\rangle_{\rP} + \langle B, \rD_{\ttX}C\rangle_{\rP}\\
    = \langle \cL [\rD_{\ttX}B +\frac{1}{2}[A, B]_{\rP}], \ttZ\rangle_{\ttG} +
    \langle \ttY, \cL[(\rD_{\ttX}C +\frac{1}{2}[A, C]_{\rP}]\rangle_{\ttG}
  \end{gathered}$$
as the metric is left-invariant, $\rP$ is constant on $\frg$, and apply the just proved identity. We can verify $\cL [\rD_{\ttX}B +\frac{1}{2}[A, B]_{\rP}]$ satisfies the derivative rule of a connection, and we have just proved it is metric compatible. Torsion-freeness follows from $[A,B]_{\rP} - [B,A]_{\rP} = 2[A, B]$, thus $\cL [\rD_{\ttX}B +\frac{1}{2}[A, B]_{\rP}]$ is the Levi-Civita connection.
    
Equation \ref{eq:nabla_GXY} is from \cite{Michor2007}, equation 3.3.2 (the author uses a right-invariant metric). It is related to the Euler-Poisson-Arnold equation (EPDiff), see equation (55) in Arnold's classical paper \cite{Arnold1966}. See also \cite{Milnor1976}.

Equation (\ref{eq:group_curv}) now follows directly from the definition of curvature $\nabla_{[\ttX, \ttY]}\ttZ - \nabla_{\ttX}\nabla_{\ttY}\ttZ + \nabla_{\ttY}\nabla_{\ttX}\ttZ$, applying to the invariant vector fields $\cL[\omega_i], i\in\{1, 2, 3\}$. Equation \cref{eq:hom_curv} follows from the O'Neil equation (Theorem 2, \cite{ONeil1966})
, written in $(1, 3)$ tensor form. Indeed, the O'Neil tensor of two vector fields $\cL[A], \cL[B]$ on $\ttG$ for $\frg$-valued functions $A$ and $B$ evaluated at the coset $o$ is $\frac{1}{2}[A, B]_{\frk}$ as the just proved result for covariant derivatives shows the Lie bracket $\{\cL[A], \cL[B]\} = \cL[[A, B]]$, then we use Lemma 2, \cite{ONeil1966}. By properties of adjoint and projection, the right-hand side of \cref{eq:hom_curv} is the unique vector in $\frm$ such that the O'Neil equation (equation 4, theorem 2, \cite{ONeil1966}) is satisfied. Equation (\ref{eq:nabla_hom}) follows from the result for $\ttG$ and property of horizontal lift of a connection in Riemannian submersion, e.g. lemma 7.45 in \cite{ONeil1983} (because of left-invariance, we can translate the projection to the identity).
\end{proof}
For a subspace $\frv\subset\frg$, we write $\omega_{1\frv}$ for $(\omega_1)_{\frv}$, the projection of $\omega_1$ to $\frv$ ($\omega_1\in\frg$). We write $[\omega_1, \omega_2]_{\frv}$, $[\omega_1, \omega_2]_{\rP\frv}$ for the corresponding projections of brackets.

On a Lie group with a bi-invariant metric $\langle\rangle$, we now introduce a family of left-invariant metrics called the Cheeger deformation metrics (\cite{Cheeger1973,Ziller2007,GZ2000}). The Lie algebra used in the deformation will be called $\fra$ here (it is often called $\frk$, but we use $\ttK$ for the stabilizer group. We will use the letters $\fra, \frb$ corresponding to the component $A$, $B$ of the Stiefel tangent vectors as will be seen shortly). Let $\ttA$ be a connected subgroup of $\ttG$ with Lie algebra $\fra$. With the bi-invariant metric on $\ttG$, $\ttA$ acts via right multiplication as a group of isometries on $\ttG$. Give $\ttG\times\ttA$ a bi-invariant metric corresponding to the inner product on $\frg\oplus\fra$ evaluated as $\langle g, g\rangle +r\langle a, a\rangle$ for $(g, a)\in\frg\times \fra$ with $r > 0$, we have the submersion $\ttG\times \ttA\to\ttG$ given by $(U, Q)\mapsto UQ^{-1}$ ($U\in\ttG, Q\in\ttA$). Let $\frg = \fra\oplus\frn$ be an orthogonal decomposition with respect to $\langle\rangle$. The submersion induces a new metric on $\ttG$ which is shown in \cite{Ziller2007} to be
$$\langle\omega_{\frn},\omega_{\frn} \rangle + \frac{r}{(r+1)} \langle\omega_{\fra}, \omega_{\fra}\rangle$$
for $\omega\in\frg$. Denote the Cheeger deformation metric $\rP_t$ on $\frg$ by the formula $\langle\omega_{\frn},\omega_{\frn} \rangle + t\langle\omega_{\fra}, \omega_{\fra}\rangle$ for $t> 0$. At $t=1$, it is the original metric. For $t < 1$, the metric corresponds to the submersion above with $r = t/(1-t)$, thus $\ttG$ has non-negative curvature by O'Neil's equation. For $t > 1$, the metric on $\ttG\times\ttA$ is semi-Riemannian but the corresponding metric on $\ttG$ is Riemannian. If $\frn$ contains a subalgebra $\frk$ corresponding to a closed subgroup $\ttK$ of $\ttG$, such that $\frk$ commutes with $\fra$ then $\ttG/\ttK$ could be equipped with the quotient metric induced from $\rP_t$. Hence, we will consider the situation when $\frk$ is a subalgebra of an algebra $\frh$ commuting if $\fra$. Note that $\ttG/\ttK$ with the original bi-invariant metric is called a {\it normal homogeneous space} in the literature, while $\rP_t$ is no longer bi-invariant.

\begin{proposition}\label{prop:abh_split} Assume the Lie algebra $\frg$ has a bi-invariant metric $\langle\rangle$. Let $\frh\subset \frg$ be a Lie subalgebra of $\frg$ and $\frh^{\perp}$ be the orthogonal complement of $\frh$ in $\frg$ under $\langle\rangle$, $\frg = \frh\oplus\frh^{\perp}$. Then $\frb :=[\frh, \frh^{\perp}]\subset\frh^{\perp}$, or $\frh^{\perp}$ is a $\frh$-module. Let $\frh^{\perp} = \frb\oplus \fra$ be an orthogonal decomposition under $\langle\rangle$. We can characterize $\fra$ as the subspace $\{A\in \frh^{\perp}|[A,\frh]=0\}$. Then
  \begin{equation} \frg = \frh\oplus \frb\oplus \fra\end{equation}  
  We have $[\fra, \frb]\subset\frb$, $\fra$ is a Lie subalgebra of $\frg$, $[\fra, \frh] = 0$ and $\frb$ is both a $\frh$ and $\fra$ module. The correspondence $\frh\mapsto\fra$ is involutive on the set of all subalgebras of $\frg$, that means if we apply the same procedure on $\fra$, we recover $\frh$.
\end{proposition}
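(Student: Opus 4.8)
The plan is to prove the five assertions in the stated order, using only the Jacobi identity and the fact that the $\ad$-invariant inner product $\langle\rangle$ makes each $\ad(H)$, $H\in\frh$, skew-symmetric; in particular $\langle[A,B],C\rangle = -\langle B,[A,C]\rangle$, and orthogonal complements of $\ad(\frh)$-invariant subspaces are again $\ad(\frh)$-invariant.

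First I would check $\frb\subseteq\frh^{\perp}$: for $H,H'\in\frh$ and $X\in\frh^{\perp}$ one has $\langle[H,X],H'\rangle = -\langle X,[H,H']\rangle = 0$ because $[H,H']\in\frh$, so $[H,X]\in\frh^{\perp}$ and $\frb = [\frh,\frh^{\perp}]\subseteq\frh^{\perp}$. To identify $\fra$, I would note that for $A\in\frh^{\perp}$ skew-symmetry gives $\langle[A,H],Y\rangle = \langle A,[H,Y]\rangle$, so $[A,\frh]=0$ is equivalent to $A\perp[\frh,\frg]$; since $[\frh,\frg]\subseteq\frh\oplus\frb$ and $A\perp\frh$ already, this is equivalent to $A\perp\frb$. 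Hence $\{A\in\frh^{\perp}:[A,\frh]=0\}$ is exactly the orthogonal complement $\fra$ of $\frb$ in $\frh^{\perp}$, and $\frg=\frh\oplus\frb\oplus\fra$ is immediate.

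The structural relations would then come from short Jacobi computations combined with the skew-symmetry remark. $[\fra,\frh]=0$ is the defining property. For $A_1,A_2\in\fra$: $[[A_1,A_2],H] = [[A_1,H],A_2]+[A_1,[A_2,H]] = 0$ and $\langle[A_1,A_2],H\rangle = -\langle A_2,[A_1,H]\rangle = 0$, giving $[A_1,A_2]\in\fra$, so $\fra$ is a subalgebra. For $A\in\fra$, $H\in\frh$, $X\in\frh^{\perp}$: $[A,X]\in\frh^{\perp}$ by the same skew-symmetry argument, whence $[A,[H,X]] = [[A,H],X]+[H,[A,X]] = [H,[A,X]]\in[\frh,\frh^{\perp}] = \frb$, so $[\fra,\frb]\subseteq\frb$. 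And $[\frh,\frb] = [\frh,[\frh,\frh^{\perp}]]\subseteq[\frh,\frh^{\perp}]=\frb$ makes $\frb$ an $\frh$-module, hence (by the previous point) an $\fra$-module; also $[\frh,\fra]=0$ gives $\frb = [\frh,\frh^{\perp}] = [\frh,\frb\oplus\fra] = [\frh,\frb]$, which by positive-definiteness of $\langle\rangle$ means $\frb$ has no nonzero $\frh$-fixed vector.

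For the involution I would rerun the construction with $\fra$ in place of $\frh$. Orthogonality of $\frg = \frh\oplus\frb\oplus\fra$ gives $\fra^{\perp} = \frh\oplus\frb$, the new ``$\frb$'' is $[\fra,\fra^{\perp}] = [\fra,\frb]\subseteq\frb$, and the characterisation of $\fra$ established above (applied now to $\fra$) identifies the new ``$\fra$'' with $\{X\in\frh\oplus\frb:[X,\fra]=0\} = \frh\oplus N$, where $N := \{b\in\frb:[b,\fra]=0\}$. Thus $\frh$ is recovered on the nose exactly when $N=0$, i.e. when $\frb$ has no trivial $\fra$-summand; this is the case in all the homogeneous spaces considered here, and for the Stiefel setup $\frg = \oo(n)$, $\fra\cong\oo(p)$, $\frb\cong\R^{(n-p)\times p}$ one has $N=0$ whenever $p\geq2$. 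The one step I expect to be a genuine obstacle is precisely this: $N$ need not vanish for an arbitrary subalgebra (already a line in $\oo(3)$ fails to be recovered), so the statement valid without any restriction is that a further application collapses $N$ — feeding $\frh\oplus N$ back in, the $\frb$-part of the output must centralise $\frh$ and therefore vanishes (no nonzero $\frh$-fixed vectors in $\frb$), and what is left is $\{A\in\fra:[A,N]=0\} = \fra$, since $[\fra,N]=0$ by definition of $N$. So the correspondence $\frh\mapsto\fra$ applied three times agrees with itself applied once, hence is an involution on its image, which is the form of the statement used in the applications.
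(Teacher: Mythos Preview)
Your treatment of the structural assertions---$[\frh,\frh^{\perp}]\subset\frh^{\perp}$, the characterisation of $\fra$ as the $\frh$-centraliser inside $\frh^{\perp}$, $[\fra,\frh]=0$, $[\fra,\frb]\subset\frb$, and $[\fra,\fra]\subset\fra$---is correct and follows the paper's own route: ad-invariance of $\langle\rangle$ plus the Jacobi identity. The only cosmetic difference is that the paper verifies $[\fra,\fra]\subset\fra$ by checking orthogonality to $\frh$ and then to $\frb$ separately, whereas you invoke the characterisation $\fra=\{X\in\frh^{\perp}:[X,\frh]=0\}$ directly once you have $[[A_1,A_2],\frh]=0$.

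On the involution claim you have located a genuine issue, and it is one the paper itself does not resolve. The paper's proof is a single sentence (``Involutiveness follows from the orthogonal decomposition $\frg = \frh\oplus\frb\oplus\fra$, and the characterization of $\fra$ by the relation $[\fra,\frh]=0$, which implies $\fra^{\perp}=\frb\oplus\frh$''), but as you observe this only shows that the procedure applied to $\fra$ yields $\{X\in\fra^{\perp}:[X,\fra]=0\}=\frh\oplus N$ with $N=\{b\in\frb:[\fra,b]=0\}$, and nothing in the argument forces $N=0$. Your counterexample---a one-dimensional $\frh\subset\oo(3)$, for which $\frb=\frh^{\perp}$, $\fra=0$, and a second application returns all of $\oo(3)$---is valid and shows the statement fails in the generality claimed. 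Your proposed repair (involutive once $N=0$, which holds in the Stiefel setup $\frg=\oo(n)$, $\fra\cong\oo(p)$, $\frb\cong\R^{(n-p)\times p}$ for $p\geq2$) is the right salvage for the paper's purposes. One caution on your further remark that three iterations agree with one: feeding $\frh\oplus N$ back in presupposes it is a subalgebra, which requires $[N,N]\subset\frh\oplus N$; that is not automatic and would need its own argument.
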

\begin{proof}
  Let $X\in\frh^{\perp}$ and $A, H\in\frh$. Then $\langle [A, X],H\rangle = -\langle X,[A, H]\rangle =0$ since $\frh$ is a subalgebra of $\frg$, thus $[A, X]\in\frh^{\perp}$. Assume the $\langle\rangle$-orthogonal decomposition $\frh^{\perp} = \frb\oplus\fra$ with $\frb = [\frh, \frh^{\perp}]$. For $A\in\fra$, $\langle [A, \frh], \frh^{\perp}\rangle \subset \langle A, [\frh, \frh^{\perp}]\rangle \subset\{0\}$ and $[A, \frh]\subset\frh^{\perp}$ as $\frh^{\perp}$ is a $\frh$-module. Hence, $[A, \frh]=0$ as $\langle\rangle$ is non-degenerate on $\frh^{\perp}$. Conversely, if $A\in\frh^{\perp}$ and $[A, \frh]=0$ then $\langle A, [\frh, \frh^{\perp}]\rangle \subset \langle [A, \frh], \frh^{\perp}]\rangle\subset\{0\}$, thus $A\in\fra$. We have proved $\fra$ is characterized as the subspace of $\frh^{\perp}$ such that $[A,\frh] = 0$ for $A\in\fra$.

    Next, for $A\in\fra$, $\langle [A, \frh^{\perp}], \frh\rangle \subset \langle A,[ \frh^{\perp}, \frh]\rangle\subset\{0\}$, thus $[A, \frh^{\perp}]\subset\frh^{\perp}$. Then
    $$\langle [A, [\frh, \frh^{\perp}]], \fra\rangle \subset
      \langle [[A, \frh], \frh^{\perp}], \fra\rangle + \langle [\frh,[A, \frh^{\perp}]], \fra\rangle\subset\{0\}
      $$
      as in the middle sum, the first item is zeros because $[A, \frh] = 0$, the second is
$\langle [\frh,[A, \frh^{\perp}]], \fra\rangle\subset \langle [A, \frh^{\perp}],[\frh, \fra]\rangle\subset \{0\}$ as $[\frh, \fra] = \{0\}$. This shows $[\fra, \frb]$ is in the orthogonal complement of $\fra$ in $\frh^{\perp}$, or $[\fra, \frb]\subset\frb$.

      Now, $\langle [\fra, \fra],\frh\rangle \subset \langle \fra, [\fra,\frh]\rangle \subset \{0\}$, thus $[\fra, \fra]\subset\frh^{\perp}$. But then $\langle [\fra, \fra],\frb\rangle \subset \langle \fra, [\fra,\frb]\rangle\subset\langle \fra, \frb\rangle \subset\{0\}$, hence $[\fra, \fra]\subset\fra$, therefore $\fra$ is a subalgebra of $\frg$, and $\frb$ is an $\fra$-module. Involutiveness follows from the orthogonal decomposition $\frg = \frh\oplus\frb\oplus\fra$, and the characterization of $\fra$ by the relation $[\fra, \frh]=0$, which implies $\fra^{\perp} = \frb\oplus\frh$.
\end{proof}
\begin{proposition}\label{prop:adPt}Assume $\frg$ has a bi-invariant inner product $\langle\rangle$. Let $\rP$ be a positive-definite self-adjoint operator under the inner product $\langle\rangle$. Then under the inner product $\langle\rangle_{\rP}$ defined by $\langle A_1, A_2\rangle_{\rP}:= \langle A_1, \rP A_2\rangle$, we have $\addg_A X = -\rP^{-1}[A, \rP X]$ for $X\in\frg$, or $\addg_A = -\rP^{-1}\circ\ad_A\circ \rP$.

  Let $t$ be a positive number and $\fra, \frb, \frh$ as in \cref{prop:abh_split}. Let $\frn=\frb\oplus\frh$, thus $\frg = \fra \oplus\frn$. Define the operator $\rP = \rP_t$ by $\rP \omega = t \omega_{\fra} + \omega_{\frn}$. Then for $\omega_1, \omega_2\in\frg$
\begin{equation}\label{eq:addg_Pta}
  (\addg_{\omega_{1}}\omega_{2})_{\fra} = -[\omega_{1\fra}, \omega_{2\fra}] -1/t[\omega_{1\frn},\omega_{2\frn}]_{\fra}
  \end{equation}
  \begin{equation}\label{eq:addg_Ptn}
(\addg_{\omega_{1}}\omega_{2})_{\frn} =  - [\omega_{1\fra}, \omega_{2\frb}] + t[\omega_{2\fra}, \omega_{1\frb}] -[\omega_{\frn, 1},\omega_{2\frn}]_{\frn}
      \end{equation}
  \begin{equation}\label{eq:addg_BKP}
    [\omega_1, \omega_2]_{\rP} =  [\omega_1, \omega_2] + (1-t)([\omega_{1\fra},\omega_{2\frb}] + [\omega_{2\fra},\omega_{1\frb}])\end{equation}
Let $\frk\subset \frh$ be a Lie subalgebra of $\frh$ and $\frm = \fra\oplus\frb\oplus\frd$ where $\frh = \frk\oplus\frd$ is an orthogonal decomposition, thus $\frg = \frk\oplus\frm$. For $\omega_3 \in\frg$
\begin{equation}\label{eq:oneil_P}
  (\addg_{\omega_3}[\omega_1, \omega_2]_{\frk})_{\frm} = -[\omega_{3\frm}[\omega_1, \omega_2]_{\frk}]
      \end{equation}
\end{proposition}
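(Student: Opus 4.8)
The statement is essentially a direct computation once the general adjoint formula is in hand, so I would organize the proof in four short moves.

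First I would prove $\addg_A = -\rP^{-1}\circ\ad_A\circ\rP$. By definition $\langle A_1,\addg_A A_2\rangle_{\rP} = \langle[A,A_1],A_2\rangle_{\rP}$; rewriting both sides through $\langle X,Y\rangle_{\rP}=\langle X,\rP Y\rangle$ and using $\ad$-invariance of the bi-invariant metric, $\langle[A,A_1],\rP A_2\rangle = -\langle A_1,[A,\rP A_2]\rangle$, so $\langle A_1,\rP\addg_A A_2\rangle = -\langle A_1,[A,\rP A_2]\rangle$ for every $A_1$, and non-degeneracy forces $\rP\addg_A A_2 = -[A,\rP A_2]$. For $\rP=\rP_t$ one has $\rP_t\omega = t\omega_{\fra}+\omega_{\frn}$ and $\rP_t^{-1}\omega = t^{-1}\omega_{\fra}+\omega_{\frn}$, hence $\addg_{\omega_1}\omega_2 = -\rP_t^{-1}[\omega_1,\rP_t\omega_2]$.

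Next I would expand $[\omega_1,\rP_t\omega_2] = t[\omega_1,\omega_{2\fra}] + [\omega_1,\omega_{2\frn}]$, splitting $\omega_1$ into its $\fra$-, $\frb$-, $\frh$-parts and invoking the bracket relations of \cref{prop:abh_split}: $[\fra,\fra]\subset\fra$, $[\fra,\frh]=0$, $[\fra,\frb]\subset\frb$, $[\frh,\frh]\subset\frh$, $[\frh,\frb]\subset\frb$ (while $[\frb,\frb]$ may spread over all three summands). The only pieces that leave $\frn=\frb\oplus\frh$ are $t[\omega_{1\fra},\omega_{2\fra}]$ and the $\fra$-component of $[\omega_{1\frn},\omega_{2\frn}]$ (equivalently of $[\omega_{1\frb},\omega_{2\frb}]$, since the other three cross terms stay in $\frn$). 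Tracking the $\fra$- and $\frn$-parts separately and then applying $\rP_t^{-1}$ (which multiplies the $\fra$-part by $t^{-1}$) produces \cref{eq:addg_Pta} and \cref{eq:addg_Ptn}. Formula \cref{eq:addg_BKP} then follows by substituting these into $[\omega_1,\omega_2]_{\rP}=[\omega_1,\omega_2]-\addg_{\omega_1}\omega_2-\addg_{\omega_2}\omega_1$: the ``symmetric'' contributions $[\omega_{i\fra},\omega_{j\fra}]$, $([\omega_{i\frn},\omega_{j\frn}])_{\fra}$ and $([\omega_{i\frn},\omega_{j\frn}])_{\frn}$ cancel in pairs by antisymmetry of the bracket, leaving precisely $(1-t)([\omega_{1\fra},\omega_{2\frb}]+[\omega_{2\fra},\omega_{1\frb}])$.

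For \cref{eq:oneil_P}, set $K=[\omega_1,\omega_2]_{\frk}\in\frk\subset\frh\subset\frn$, so $\rP_tK=K$ and $\addg_{\omega_3}K=-\rP_t^{-1}[\omega_3,K]$. Since $[\omega_{3\fra},K]=0$, $[\omega_{3\frb},K]\in\frb$ and $[\omega_{3\frh},K]\in\frh$, the bracket $[\omega_3,K]$ already lies in $\frn$, so $\rP_t^{-1}$ acts trivially and $\addg_{\omega_3}K=-[\omega_3,K]$. Projecting onto $\frm=\fra\oplus\frb\oplus\frd$: the $\fra$-term is zero; $[\omega_{3\frb},K]\in\frb\subset\frm$ survives; and writing $\omega_{3\frh}=\omega_{3\frk}+\omega_{3\frd}$, the part $[\omega_{3\frk},K]$ lies in $\frk$ and is annihilated by the projection, whereas $[\omega_{3\frd},K]$ already lies in $\frm$ because it is in $\frh$ and orthogonal to $\frk$ --- indeed $\langle[\omega_{3\frd},K],K'\rangle=-\langle\omega_{3\frd},[K,K']\rangle\in\langle\frd,\frk\rangle=\{0\}$ since $[\frk,\frk]\subset\frk$. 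Hence $(\addg_{\omega_3}K)_{\frm}=-[\omega_{3\frb}+\omega_{3\frd},K]=-[\omega_{3\frm},K]$, which is \cref{eq:oneil_P}.

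The argument is entirely elementary algebra; the only points that will need care are keeping exact track of which bracket components escape $\frn$ in the derivation of \cref{eq:addg_Pta}--\cref{eq:addg_Ptn}, and the observation $[\frd,\frk]\subset\frd$ (a consequence of bi-invariance together with $[\frk,\frk]\subset\frk$), which is what makes the right-hand side of \cref{eq:oneil_P} land in $\frm$.
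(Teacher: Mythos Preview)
Your argument is correct and, for the formula $\addg_A=-\rP^{-1}\circ\ad_A\circ\rP$ and equations \eqref{eq:addg_Pta}--\eqref{eq:addg_BKP}, follows the same route as the paper: expand $-\rP_t^{-1}[\omega_{1\fra}+\omega_{1\frn},\,t\omega_{2\fra}+\omega_{2\frn}]$, use the bracket relations from \cref{prop:abh_split}, and cancel the symmetric pieces by antisymmetry of $[\,,\,]$.

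For \eqref{eq:oneil_P} your approach differs from the paper's. You apply the already-established identity $\addg_{\omega_3}K=-\rP_t^{-1}[\omega_3,\rP_tK]$ directly to $K=[\omega_1,\omega_2]_{\frk}$, observe that $\rP_tK=K$ and that $[\omega_3,K]\in\frn$ so $\rP_t^{-1}$ is the identity on it, and then project to $\frm$ using $[\frd,\frk]\subset\frd$. The paper instead goes back to the defining adjoint relation, pairing $\addg_{\omega_3}K$ with an arbitrary $\omega_4\in\frm$ under $\langle\,,\,\rangle_{\rP}$ and reducing via bi-invariance. Your route is shorter and more transparent once the general formula for $\addg$ is in hand; the paper's route has the minor advantage of not needing the explicit observation $[\frd,\frk]\subset\frd$, since it lands on $-[\omega_{3\frm},K]$ directly from the inner-product manipulation. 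Both are valid.
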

\begin{proof}
 Let $A, Y, X\in \frg$. From $\ad(\frg)$ invariance of $\langle\rangle$ we have
  $$\langle[A, Y], \rP X\rangle = \langle Y, -\rP \rP^{-1}[A, \rP X]\rangle$$
 which gives us the first statement.

 For \cref{eq:addg_Pta} and \cref{eq:addg_Ptn}, we expand
 $$\begin{gathered}\addg_{\omega_1}\omega_2 = -\rP^{-1}[\omega_{1\fra} + \omega_{1\frn}, t\omega_{2\fra} + \omega_{2\frn}]\\
   =(-1/t)([t\omega_{1\fra}, \omega_{2\fra}] + [\omega_{1\frn},\omega_{2\frn}]_{\fra}) - ([\omega_{1\fra}, \omega_{2\frn}] + [\omega_{1\frn}, t\omega_{2\fra}] +[\omega_{1\frn},\omega_{2\frn}])_{\frn}
 \end{gathered}$$
 then use the fact that $[\fra, \frh] = \{0\}$. Equation \ref{eq:addg_BKP} follows from this and the definition of $[\quad]_{\rP}$, using anti-commutativity to cancel $1/t([\omega_{1\frn},\omega_{2\frn}]_{\fra} + [\omega_{2\frn},\omega_{1\frn}]_{\fra})$.

 For \cref{eq:oneil_P}, let $\omega_4\in\frm$, we have
 $$\langle \addg_{\omega_3}[\omega_1,\omega_2]_{\frk},\omega_4\rangle_{\rP} =
 \langle [\omega_1,\omega_2]_{\frk},\rP[\omega_3,\omega_4]\rangle = \langle [\omega_1,\omega_2]_{\frk},[\omega_3,\omega_4]_{\frk}\rangle
 $$
 as when we expand $\rP[\omega_3,\omega_4]$, only $[\omega_3,\omega_4]_{\frk}$ could be not orthogonal to $[\omega_1,\omega_2]_{\frk}$. From here $\langle [\omega_1,\omega_2]_{\frk},[\omega_3,\omega_4]_{\frk}\rangle = \langle [\omega_1,\omega_2]_{\frk},[\omega_3,\omega_4]\rangle = -\langle [\omega_3, [\omega_1,\omega_2]_{\frk}],\omega_4\rangle$. But $[\omega_{3\frk},[\omega_1,\omega_2]_{\frk}]$ is orthogonal to $\omega_4\in\frm$, so we are left with $-\langle [\omega_{3\frm}, [\omega_1,\omega_2]_{\frk}],\omega_4\rangle =-\langle [\omega_{3\frm}, [\omega_1,\omega_2]_{\frk}],\omega_4\rangle_{\rP}$ as $[\omega_{3\frm}, [\omega_1,\omega_2]_{\frk}]_{\fra} = 0$, because $[\omega_{3\fra}, [\omega_1,\omega_2]_{\frk}] = 0$ while the remaining term is in $\frb\oplus\frh$. By \cref{prop:abh_split} $[\omega_{3\frm}, [\omega_1,\omega_2]_{\frk}]\in\frm$ since $\frm$ is the orthogonal complement of $\frk$, this proves \cref{eq:oneil_P}.
\end{proof}
Recall $o$ is the coset containing the identity in the homogeneous manifold $\ttG/\ttK$. The expression $\rR^{[0]}$ in the following theorem is the curvature of a normal homogeneous manifold, probably not usually known in this format.
\begin{proposition}\label{prop:curve_Pt} For a Lie group $\ttG$ with Lie algebra $\frg$
and a bi-invariant metric $\langle\rangle_{\ttG}$, the curvature of the homogeneous manifold $\ttM =\ttG/\ttK$ under the metric $\rP_t$ at $o$ with $\frk\subset\frh$ are subalgebras of $\frg$, ($\frg = \fra \oplus \frb\oplus\frh = \fra\oplus\frn = \frm\oplus\frk$ as in \cref{prop:abh_split}) at $\omega_1, \omega_2, \omega_3\in\frm$ is given by
      \begin{equation}
        \rR_{\omega_1, \omega_2}\omega_3 = \rR^{[0]}_{\omega_1, \omega_2}\omega_3 +
        (1-t)\rR^{[1]}_{\omega_1, \omega_2}\omega_3 + (1-t)^2\rR^{[2]}_{\omega_1, \omega_2}\omega_3        
      \end{equation}
      \begin{equation}
        \begin{gathered}
        \rR^{[0]}_{\omega_1, \omega_2}\omega_3 := \frac{1}{4}([[\omega_1, \omega_2],\omega_3]_{\frm} + 2[[\omega_1, \omega_2]_{\frk},\omega_3] -[[\omega_2, \omega_3]_{\frk},\omega_1] + [[\omega_1, \omega_3]_{\frk},\omega_2])
        \end{gathered}
      \end{equation}      
      \begin{equation}\begin{gathered}
          \rR^{[1]}_{\omega_1, \omega_2}\omega_3 :=
        \frac{1}{2}([[\omega_1, \omega_2]_{\fra}, \omega_{3\frb}] +
             [\omega_{3\fra}, [\omega_1, \omega_2]_{\frb}])\\
        - \frac{1}{4}([\omega_1, [\omega_{2\fra}, \omega_{3\frb}] +
                [\omega_{3\fra}, \omega_{2\frb}]] +
            [\omega_{1\fra}, [\omega_2, \omega_3]_{\frb}] +
            [[\omega_2, \omega_3]_{\fra}, \omega_{1\frb}])_{\frm}\\
         + \frac{1}{4}([\omega_2, [\omega_{1\fra}, \omega_{3\frb}] +
           [\omega_{3\fra}, \omega_{1\frb}]] +
            [\omega_{2\fra}, [\omega_1, \omega_3]_{\frb}] +
            [[\omega_1, \omega_3]_{\fra}, \omega_{2\frb}])_{\frm}
            \end{gathered}
      \end{equation}
   \begin{equation}\begin{gathered}
       4\rR^{[2]}_{\omega_1, \omega_2}\omega_3 := -[\omega_{1\fra}, [\omega_{2\fra}, \omega_{3\frb}] +
              [\omega_{3\fra}, \omega_{2\frb}]]
         + [\omega_{2\fra}, [\omega_{1\fra}, \omega_{3\frb}] +
           [\omega_{3\fra}, \omega_{1\frb}]]
     \end{gathered}
   \end{equation}
\end{proposition}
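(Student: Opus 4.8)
The plan is to specialize the homogeneous-space curvature formula \cref{eq:hom_curv} of \cref{prop:curv_general} to the metric $\rP=\rP_t$ and then substitute the adjoint and bracket identities of \cref{prop:adPt}. First I would verify that the hypotheses of the homogeneous part of \cref{prop:curv_general} apply: $\rP_t$ is left-invariant by construction; it is $\ad(\frk)$-invariant because for $K\in\frk\subset\frh$ the operator $\ad_K$ annihilates $\fra$ (as $[\frh,\fra]=0$) and preserves $\frn=\frb\oplus\frh$ (as $[\frh,\frb]\subset\frb$ and $\frh$ is a subalgebra), hence commutes with $\rP_t$; consequently right translation by the closed subgroup $\ttK$ is an isometry of $(\ttG,\rP_t)$, the action being free and proper by hypothesis; and $\frm=\fra\oplus\frb\oplus\frd$ (where $\frh=\frk\oplus\frd$) is $\langle\rangle_{\rP_t}$-orthogonal to $\frk$, since $\rP_t$ fixes $\frk$ pointwise and $\frm\perp\frk$ already holds for the bi-invariant metric. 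So \cref{eq:hom_curv} applies verbatim with $\rP=\rP_t$ and horizontal vectors $\omega_1,\omega_2,\omega_3\in\frm$.

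Next I would rewrite every deformed bracket using \cref{eq:addg_BKP}, i.e.\ $[\omega,\omega']_{\rP_t}=[\omega,\omega']+(1-t)S(\omega,\omega')$ with $S(\omega,\omega'):=[\omega_\fra,\omega'_\frb]+[\omega'_\fra,\omega_\frb]$. The structural point from \cref{prop:abh_split} is that $S(\omega,\omega')\in\frb$, because $[\fra,\frb]\subset\frb$; this forces the second application of the identity to a nested term $[\omega_1,[\omega_2,\omega_3]_{\rP_t}]_{\rP_t}$ to leave the $\fra$-component of the inner argument unchanged and to add only to its $\frb$-component, giving the two-step expansion $[\omega_1,[\omega_2,\omega_3]_{\rP_t}]_{\rP_t}=[\omega_1,[\omega_2,\omega_3]]+(1-t)\bigl([\omega_1,S(\omega_2,\omega_3)]+[\omega_{1\fra},[\omega_2,\omega_3]_\frb]+[[\omega_2,\omega_3]_\fra,\omega_{1\frb}]\bigr)+(1-t)^2[\omega_{1\fra},S(\omega_2,\omega_3)]$ (and the same with $1$ and $2$ interchanged), while $[[\omega_1,\omega_2],\omega_3]_{\rP_t}=[[\omega_1,\omega_2],\omega_3]+(1-t)S([\omega_1,\omega_2],\omega_3)$ has no quadratic term. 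The three adjoint-of-$\frk$-bracket terms of \cref{eq:hom_curv} are handled by \cref{eq:oneil_P}: since $[\cdot,\cdot]_\frk\in\frk$ is fixed by $\rP_t$ and commutes with $\fra$, their $\frm$-projections equal $-[\omega_{i\frm},[\cdot,\cdot]_\frk]$, which is $t$-independent and, because $\omega_i\in\frm$, equals $[[\cdot,\cdot]_\frk,\omega_i]$.

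I would then collect the resulting expression in powers of $(1-t)$ and project onto $\frm$. The $(1-t)^0$ part is $\bigl(\tfrac12[[\omega_1,\omega_2],\omega_3]-\tfrac14[\omega_1,[\omega_2,\omega_3]]+\tfrac14[\omega_2,[\omega_1,\omega_3]]\bigr)_\frm$ plus the three $\frk$-bracket terms; the Jacobi identity collapses the first three terms to $\tfrac14[[\omega_1,\omega_2],\omega_3]_\frm$, and the $\frk$-terms become $\tfrac12[[\omega_1,\omega_2]_\frk,\omega_3]$, $-\tfrac14[[\omega_2,\omega_3]_\frk,\omega_1]$, $\tfrac14[[\omega_1,\omega_3]_\frk,\omega_2]$, giving exactly $\rR^{[0]}$. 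The $(1-t)^1$ coefficient is $\tfrac12 S([\omega_1,\omega_2],\omega_3)-\tfrac14\bigl([\omega_1,S(\omega_2,\omega_3)]+[\omega_{1\fra},[\omega_2,\omega_3]_\frb]+[[\omega_2,\omega_3]_\fra,\omega_{1\frb}]\bigr)$ plus $\tfrac14$ times the same expression with $1$ and $2$ interchanged, all projected to $\frm$; expanding $S$ matches the stated $\rR^{[1]}$ term by term. The $(1-t)^2$ coefficient is $-\tfrac14[\omega_{1\fra},S(\omega_2,\omega_3)]+\tfrac14[\omega_{2\fra},S(\omega_1,\omega_3)]$, which equals $\rR^{[2]}$ (and lies automatically in $\frm$, since $[\fra,[\fra,\frb]]\subset\frb$, matching the absence of a projection there).

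The substitutions and the collection in powers of $(1-t)$ are mechanical; I expect the only delicate part to be organizational — tracking which nested sub-bracket lands in $\fra$, $\frb$, or $\frh$ so that the double-deformed bracket really reduces to the two-step form above, and taking the $\frm$-projections in the right order so that the Jacobi rearrangement at order $0$ and the direct matching at order $1$ produce precisely the coefficients $\tfrac14,\tfrac12$ of the statement. All the structural relations needed ($[\fra,\frh]=0$, $[\fra,\frb]\subset\frb$, $[\frh,\frb]\subset\frb$) are furnished by \cref{prop:abh_split}, so no geometric input is required beyond \cref{prop:curv_general} and \cref{prop:adPt}.
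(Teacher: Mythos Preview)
Your proposal is correct and follows essentially the same route as the paper: both specialize \cref{eq:hom_curv} using \cref{eq:addg_BKP} and \cref{eq:oneil_P}, expand the nested $\rP_t$-brackets in powers of $(1-t)$ (observing that the deformation term lies in $\frb$ so only the inner bracket contributes a quadratic term), invoke the Jacobi identity for the order-zero piece, and read off $\rR^{[0]},\rR^{[1]},\rR^{[2]}$ by collecting coefficients. Your explicit verification that $\rP_t$ is $\ad(\frk)$-invariant and that $\frm\perp_{\rP_t}\frk$ is a useful addition the paper leaves implicit, and your shorthand $S(\omega,\omega')$ makes the bookkeeping cleaner, but the substance is the same.
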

\begin{proof}We apply the formulas for $[\quad]_{\rP}$, with $\frn = \frb\oplus\frh$ and $\frg = \fra\oplus\frn$
        $$[[\omega_1, \omega_2], \omega_3]_{\rP, \fra} = [[\omega_1, \omega_2], \omega_3]_{\fra}
      $$
      $$[[\omega_1, \omega_2], \omega_3]_{\rP, \frn} = [[\omega_1, \omega_2], \omega_3]_{\frn} + (1-t)([[\omega_1, \omega_2]_{\fra}, \omega_{3,\frb}] +[\omega_{3, \fra}, [\omega_1, \omega_2]_{\frb}]
      $$
$$[\omega_1[\omega_2, \omega_3]_{\rP}]_{\rP}
= [\omega_1, [\omega_2, \omega_3]] +
        (1-t)
        [\omega_1, [\omega_{2, \fra}, \omega_{3,\frb}] +
          [\omega_{3,\fra}, \omega_{2,\frb}]] +$$
$$        (1-t)([\omega_{1\fra}, [\omega_2, \omega_3]_{\frb}] +
        [[\omega_2, \omega_3]_{\fra}, \omega_{1,\frb}])
        +(1-t)^2([\omega_{1\fra},
          [\omega_{2\fra}, \omega_{3\frb}] +
          [\omega_{3\fra}, \omega_{2\frb}]])        
  $$
        We now apply \cref{eq:hom_curv}. By the Jacobi identity, the $\rR^{[0]}$ component of the first line is
        $$(\frac{1}{2}[[\omega_1, \omega_2], \omega_3] - \frac{1}{4}[\omega_1, [\omega_2, \omega_3]] + \frac{1}{4}[\omega_2, [\omega_1, \omega_3]])_{\frm}=\frac{1}{4}[[\omega_1, \omega_2], \omega_3]_{\frm}$$
while the second line has the O'Neil terms $\addg_{\omega_i}[\omega_j, \omega_k]_{\frk}$ ($i, j, k$ in a permutation of $\{1, 2, 3\}$) evaluated as $-[\omega_{i\frm}[\omega_j, \omega_k]_{\frk}]$. Since we assume $\omega_i\in\frm$, this gives us the expression for $\rR^{[0]}$. Permuting indices the collect terms, we get $\rR^{[1]}$ and $\rR^{[2]}$. Some of the expressions, for example, $\rR^{[2]}_{\omega_1\omega_2}\omega_3$ are already in $\frm$ so we do not need to apply projection again.
\end{proof}
We use these formulas to compute the Levi-Civita connection and curvature for Stiefel manifolds. For two integers $n> p$, we will describe the Stiefel manifold as $\SOO(n)/\SOO(n-p)$. Here, $\ttG = \SOO(n)$ and $\ttK = \SOO(n-p)$. We take $\frg = \oo(n)$, $\ttG=\SOO(n)$. Take the bi-invariant form to be $\frac{1}{2}\Tr(\omega_1^{\ft}\omega_2)$. We divide a matrix in $\oo(n)$ to blocks of the form $\begin{bmatrix}A & -B^{\ft}\\B & H \end{bmatrix}$, with $A\in\oo(p)$, $B\in\R^{(n-p)\times p}$ and $H\in\R^{(n-p)\times (n-p)}$, and we represent that matrix by a triple $[\![A, B, H]\!]$ to save space.

Take the subalgebra generated by the $H$ block to be $\frk=\frh=\oo(n-p)$, identified with the bottom right $(n-p)\times (n-p)$ block of $\oo(n)$, then $\frm$ is the subspace of $\oo(n)$ where the $H$-block is zero, the subalgebra $\fra$ is $\oo(p)$ identified with the $A$-block, and $\frb$ is the subspace generated by the $B$ and $B^{\ft}$-blocks, as in the below
$$\frg : \begin{bmatrix}\fra & \frb\\ \frb & \frh\end{bmatrix}\quad\quad \frn : \begin{bmatrix}0 & \frb\\ \frb & \frh\end{bmatrix}\quad\quad
    \frm : \begin{bmatrix}\fra & \frb\\ \frb & 0\end{bmatrix}$$
      The Lie and $[\quad]_{\rP}$ brackets of $\lbb A_1, B_1, H_1]\rbb, \lbb A_2, B_2, H_2\rbb\in \oo(n)$ are given by
        $$[\lbb A_1, B_1, H_1\rbb, \lbb A_2, B_2, H_2\rbb] =$$
      $$\lbb[A_1, A_2]+B_2^{\ft}B_1 -B_1^{\ft}B_2, B_1A_2 +H_1B_2 - B_2A_1 - H_2B_1, [H_1, H_2] +B_2B_1^{\ft}- B_1B_2^{\ft}\rbb$$
$$[\lbb A_1, B_1, H_1\rbb, \lbb A_2, B_2, H_2\rbb]_{\rP} =
\lbb[A_1, A_2]+B_2^{\ft}B_1 -B_1^{\ft}B_2,$$
$$  tB_1A_2 +H_1B_2 +(t-2)B_2A_1 - H_2B_1,$$
$$  [H_1, H_2] +B_2B_1^{\ft}- B_1B_2^{\ft}\rbb$$

For $U = (Y|\Yperp)\in\SOO(n)$, where $(|)$ denotes the division of a matrix in $\R^{n\times n}$ to the first $p$ (in $\R^{n\times p}$) and last $n-p$ (in $\R^{n\times (n-p)}$) column blocks, if $\omega = (\eta|\eta_{\perp})$ is a tangent vector at $U$ to $\SOO(n)$ then $\omega = d\cL_U(U^{\ft}\omega) = U\lbb Y^{\ft}\eta, \Yperp^{\ft}\eta, \Yperp^{\ft}\eta_{\perp}\rbb$.

We describe the submersion $\SOO(n)\to\St{p}{n}$, identifying $\St{p}{n}$ with $\SOO(n)/\SOO(n-p)$ by the map $U\mapsto Y$, where $U = (Y|\Yperp)$ as just described. The map is clearly a differentiable submersion on to $\St{p}{n}$, the fiber over $Y$ consists of matrices of the form $(Y|\Yperp Q)$, $Q\in\SOO(n-p)$, hence the vertical space consists of $(0|\Yperp \rmq)$, $\rmq\in\oo(n-p)$.

Equip $\SOO(n)$ with the metric $\rP_t$ in \cref{prop:adPt}. At $U=\dI_n$, the horizontal space consists of matrices of the form $\lbb A, B, 0\rbb$, with $A\in\oo(p), B\in\R^{(n-p)\times p}$, and in general, a horizontal vector is of the form $U\lbb A, B, 0\rbb$. The submersion maps $\omega = (\eta|\eta_{\perp})$ to $\eta\in\R^{n\times p}$ satisfying $Y^{\ft}\eta \in\oo(p)$.

\begin{proposition}\label{prop:stf_leftinv} With the above setting, the horizontal lift of a tangent vector $\eta\in\R^{n\times p}$ to $\St{p}{n}$ at $U=(Y|\Yperp)\in\SOO(n)$ under $\rP_t$ is $\bar{\eta} = (\eta| -Y\eta^{\ft}\Yperp)$ and the induced metric is
  \begin{equation}\langle\eta, \eta \rangle_t =
    \Tr(\eta\eta^{\ft} +(\frac{t}{2} - 1) YY^{\ft}\eta\eta^{\ft})\end{equation}
  The Levi-Civita connection for two vector fields $\ttV, \ttZ$ on $\St{p}{n}$ under this metric is given by
  \begin{equation}\label{eq:levinew}\nabla_{\ttV}\ttZ = \rD_{\ttV}\ttZ + \frac{1}{2}Y(\ttV^{\ft}\ttZ + \ttV^{\ft}\ttZ) + \frac{2-t}{2}(\dI_n-YY^{\ft})(\ttV \ttZ^{\ft} +\ttV\ttZ^{\ft})Y
\end{equation}  
The curvature $\rR_{\xi, \eta}\phi$ at $Y\in\St{p}{n}$ for three tangent vectors $\xi, \eta, \phi$ computed by \cref{prop:curv_general} is identical to that computed by \cref{eq:cur_ABCA} and (\ref{eq:cur_ABCB}) if we represent the tangent and curvature vectors in the format in \cref{prop:stiefel_cur}, and set $\alpha=t/2$.
\end{proposition}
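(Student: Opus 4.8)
The plan is to dispatch the three assertions in order; the substantive step is the connection formula, after which the curvature claim costs little.

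First I would pin down the horizontal lift. At $U=\dI_n$ the vertical space is $\{\lbb 0,0,\rmq\rbb:\rmq\in\oo(n-p)\}\subset\frk=\frh$, and since $\rP_t$ rescales the $\fra$-part by $t$ without mixing the three blocks, its $\rP_t$-orthogonal complement is $\{\lbb A,B,0\rbb\}$; left-translating, a horizontal vector at $U=(Y|\Yperp)$ has the form $U\lbb A,B,0\rbb$. Requiring that the submersion $\pi\colon U\mapsto Y$ send it to $\eta$ (its first column-block is $\eta$) and that its $H$-block vanish forces, via $\Yperp\Yperp^{\ft}=\dI_n-YY^{\ft}$ and $Y^{\ft}\Yperp=0$, the second column-block to be $-Y\eta^{\ft}\Yperp$, so $\bar\eta=(\eta|-Y\eta^{\ft}\Yperp)$. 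Then $\langle\eta,\eta\rangle_t=\langle\bar\eta,\bar\eta\rangle_{\rP_t}=\langle U^{\ft}\bar\eta,U^{\ft}\bar\eta\rangle_{\rP_t}$ with $U^{\ft}\bar\eta=\lbb Y^{\ft}\eta,\Yperp^{\ft}\eta,0\rbb$; evaluating $\frac t2\Tr(\eta^{\ft}YY^{\ft}\eta)+\Tr(\eta^{\ft}\Yperp\Yperp^{\ft}\eta)$ and substituting $\Yperp\Yperp^{\ft}=\dI_n-YY^{\ft}$ yields $\Tr(\eta\eta^{\ft})+(\frac t2-1)\Tr(YY^{\ft}\eta\eta^{\ft})$, which is the metric operator $\sfg$ of \cref{sec:stiefel} with $\alpha_0=1$ and $\alpha=t/2$.

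For the Levi-Civita connection I would apply \cref{eq:nabla_hom}. The key observation is that for $\omega=\lbb A',B',H'\rbb\in\frg$ one has $d\pi(d\cL_U\omega)=YA'+\Yperp B'$, independent of $H'$; hence after composing with $d\pi$ the projection $(\cdot)_{\frm}$ in \cref{eq:nabla_hom} is harmless, and $\nabla_{\ttV}\ttZ$ is simply the first column-block of $U(\rD_{\bar\ttV}B+\frac12[A,B]_{\rP})$, where $A(U)=U^{\ft}\bar\ttV(U)$ and $B(U)=U^{\ft}\bar\ttZ(U)$. Writing $\bar\ttZ=UB$ gives $U\rD_{\bar\ttV}B=\rD_{\bar\ttV}\bar\ttZ-\bar\ttV U^{\ft}\bar\ttZ$, whose first block I would compute from $\bar\ttV=(\ttV|-Y\ttV^{\ft}\Yperp)$ and from the fact that the first column-block of $\rD_{\bar\ttV}\bar\ttZ$ is $\rD_{\ttV}\ttZ$ pulled back along $\pi$, while the first block of $U[A,B]_{\rP}$ is read off from the explicit $[\ ,\ ]_{\rP}$-bracket of $\lbb Y^{\ft}\ttV,\Yperp^{\ft}\ttV,0\rbb$ and $\lbb Y^{\ft}\ttZ,\Yperp^{\ft}\ttZ,0\rbb$ displayed above. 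Using $\Yperp\Yperp^{\ft}=\dI_n-YY^{\ft}$ and the tangency identities $Y^{\ft}\ttV+\ttV^{\ft}Y=0$, $Y^{\ft}\ttZ+\ttZ^{\ft}Y=0$ repeatedly, the purely second-order terms $YY^{\ft}\ttV Y^{\ft}\ttZ$ and $YY^{\ft}\ttZ Y^{\ft}\ttV$ cancel and the remaining pieces reassemble into $\rD_{\ttV}\ttZ+\frac12 Y(\ttV^{\ft}\ttZ+\ttZ^{\ft}\ttV)+\frac{2-t}{2}(\dI_n-YY^{\ft})(\ttV\ttZ^{\ft}+\ttZ\ttV^{\ft})Y$, which is $\rD_{\ttV}\ttZ+\Gamma(\ttV,\ttZ)$ for the $\Gamma$ of \cref{stf_gamma} at $\alpha=t/2$, i.e. \cref{eq:levinew}; in particular this also re-derives that \cref{stf_gamma} is a Christoffel function.

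For the curvature, recall that \cref{eq:cur_ABCA} and \cref{eq:cur_ABCB} were obtained in the proof of \cref{prop:stiefel_cur} by inserting the $\Gamma$ of \cref{stf_gamma} into \cref{eq:rc1a}, and that \cref{eq:rc1a} is a valid curvature formula for any connection of the form $\rD+\Gamma$ with $\rD$ the ambient flat derivative. Since the previous paragraph identifies the Levi-Civita connection of $\rP_t$ with exactly this $\rD+\Gamma$ at $\alpha=t/2$, the curvature computed via \cref{prop:curv_general} necessarily agrees with \cref{eq:cur_ABCA} and \cref{eq:cur_ABCB} under the identification $\xi=YA_1+\Yperp B_1\leftrightarrow\lbb A_1,B_1,0\rbb$ (which is $d\pi$ of the horizontal lift) and $\alpha=t/2$. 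To obtain a computation that is genuinely independent of the Christoffel function one would instead substitute $\omega_i=\lbb A_i,B_i,0\rbb$ directly into \cref{eq:hom_curv}, or into the decomposition of \cref{prop:curve_Pt}, using the displayed Lie and $[\ ,\ ]_{\rP}$ brackets together with \cref{eq:oneil_P} for the O'Neill terms, projecting onto $\frm$ and reading off the $\oo(p)$- and $\R^{(n-p)\times p}$-components. I expect the bookkeeping along this last route — tracking the $\frk$-projections, the $\addg$ (O'Neill) contributions, and the $(1-t)$ and $(1-t)^2$ pieces of \cref{prop:curve_Pt}, and matching them termwise against \cref{eq:cur_ABCA} and \cref{eq:cur_ABCB} — to be the main obstacle; the connection computation, though lengthy, is routine once the $d\pi$-simplification is in place.
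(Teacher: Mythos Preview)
Your treatment of the horizontal lift, the induced metric, and the Levi-Civita connection matches the paper's proof essentially step for step; the paper also lifts, expands $C=\rD_{\bar\ttV}F+\frac12[\cdot,\cdot]_{\rP}$, takes the first $p$ columns of $UC_{\frm}$, and simplifies with $\Yperp\Yperp^{\ft}=\dI_n-YY^{\ft}$ and the tangency relations.

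For the curvature assertion you take a genuinely different route. Your primary argument is the soft one: the connection just computed equals $\rD+\Gamma$ with the $\Gamma$ of \cref{stf_gamma} at $\alpha=t/2$, and since \cref{eq:cur_ABCA}, \cref{eq:cur_ABCB} arise from that $\Gamma$ via \cref{eq:rc1a} while \cref{prop:curv_general} computes the curvature of the same Levi-Civita connection, the two must agree. This is logically valid and short. The paper instead carries out the explicit Lie-theoretic computation you defer: it observes that both sides are quadratic in $t$ (equivalently in $\alpha$), writes the Lie-group side as $\rR^{[0]}+(1-t)\rR^{[1]}+(1-t)^2\rR^{[2]}$ via \cref{prop:curve_Pt}, and matches these three pieces against $g(1/2)$, $-\tfrac12 g'(1/2)$, $\tfrac18 g''(1/2)$ where $g(\alpha)=\lbb A_R(\alpha),B_R(\alpha),0\rbb$. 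Your route is far more economical; the paper's route delivers the independent confirmation of \cref{eq:cur_ABCA}--\cref{eq:cur_ABCB} advertised in the introduction, which your soft argument (being circular through $\Gamma$) does not. You correctly flag this and sketch the direct computation, so there is no gap, only a difference in what the proof is meant to accomplish.
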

\begin{proof} A matrix multiplication shows $U^{\ft}\bar{\eta}$ is antisymmetric and could be represented as $\lbb Y^{\ft}\eta, \Yperp^{\ft}\eta, 0\rbb\in\oo(n)$, which is horizontal at $\dI_n$, thus $\bar{\eta}$ is horizontal and maps to $\eta$, hence it is the horizontal lift.

Using the relations $\Yperp\Yperp^{\ft} + YY^{\ft} = \dI_n$ the induced metric is
$$\begin{gathered}\langle U^{\ft}\bar{\eta}, U^{\ft}\bar{\eta}\rangle_{\rP} =
  \frac{1}{2}\Tr\begin{bmatrix} Y^{\ft}\eta & -\eta^{\ft}\Yperp\\\Yperp^{\ft}\eta& 0\end{bmatrix}\begin{bmatrix} t\eta^{\ft}Y & \eta^{\ft}\Yperp\\ -\Yperp^{\ft}\eta& 0\end{bmatrix}\\
    = \frac{1}{2}\Tr (tYY^{\ft}\eta\eta^{\ft} + 2 \Yperp\Yperp^{\ft}\eta\eta^{\ft}) = \Tr(\eta\eta^{\ft} +(\frac{t}{2} - 1) YY^{\ft}\eta\eta^{\ft})\end{gathered}$$

Let $\ttV, \ttZ$ be two vector fields on $\St{p}{n}$, which lift to $\SOO(n)$-vector fields $\bar{\ttV} = (\ttV|-Y\ttV^{\ft}\Yperp)$, $\bar{\ttZ} = (\ttZ|-Y\ttZ^{\ft}\Yperp)$. Let $F = U^{\ft}\bar{\ttZ} = \lbb Y^{\ft}\ttZ, \Yperp^{\ft}\ttZ, 0\rbb$, by \cref{eq:nabla_hom}, $\nabla_{\ttV}\ttZ$ lifts to $UC_{\frm}$ with $C = \rD_{\bar{\ttV}}F + \frac{1}{2}[\lbb Y^{\ft}\ttV, \Yperp^{\ft}\ttV, 0\rbb, \lbb Y^{\ft}\ttZ, \Yperp^{\ft}\ttZ, 0\rbb]_{\rP}$. Expand the Lie-derivative and the $\rP$-bracket
$$\begin{gathered}C = \lbb\ttV^{\ft}\ttZ + Y^{\ft}\rD_{\ttV}\ttZ, -\Yperp^{\ft}\ttV Y^{\ft}\ttZ +\Yperp^{\ft}\rD_{\ttV}\ttZ, 0\rbb +\\ \frac{1}{2}\lbb[Y^{\ft}\ttV,
    Y^{\ft}\ttZ] +\ttZ^{\ft}\Yperp\Yperp^{\ft}\ttV - \ttV^{\ft}\Yperp\Yperp^{\ft}\ttZ,
  t\Yperp^{\ft}\ttV Y^{\ft}\ttZ +(t-2)\Yperp^{\ft}\ttZ Y^{\ft}\ttV, C_H\rbb
\end{gathered}$$
for $C_H\in\oo(n-p)$. Thus, the submersion maps $UC_{\frm}$ to its left $p$ columns
$$\begin{gathered} Y(\ttV^{\ft}\ttZ + Y^{\ft}\rD_{\ttV}\ttZ +\frac{1}{2}([Y^{\ft}\ttV,
  Y^{\ft}\ttZ] +\ttZ^{\ft}\Yperp\Yperp^{\ft}\ttV - \ttV^{\ft}\Yperp\Yperp^{\ft}\ttZ)) +\\
  \Yperp(-\Yperp^{\ft}\ttV Y^{\ft}\ttZ +\Yperp^{\ft}\rD_{\ttV}\ttZ +\frac{1}{2}(t\Yperp^{\ft}\ttV Y^{\ft}\ttZ +(t-2)\Yperp^{\ft}\ttZ Y^{\ft}\ttV))\\
  =\rD_{\ttV}\ttZ + Y\ttV^{\ft}\ttZ + \frac{1}{2}(YY^{\ft}\ttV Y^{\ft} \ttZ - YY^{\ft}\ttZ Y^{\ft} \ttV + Y\ttZ^{\ft}\Yperp\Yperp\ttV - Y\ttV^{\ft}\Yperp\Yperp\ttZ) \\
+\frac{1}{2} \Yperp\Yperp^{\ft}(-2\ttV Y^{\ft}\ttZ  +t\ttV Y^{\ft}\ttZ +(t-2)\ttZ Y^{\ft}\ttV)
\end{gathered}$$
The last line simplifies to
$$\frac{t-2}{2}(\dI_n-YY^{\ft})(\ttV Y^{\ft}\ttZ +\ttZ Y^{\ft}\ttV) = \frac{2-t}{2}(\dI_n-YY^{\ft})(\ttV \ttZ^{\ft} +\ttZ\ttV^{\ft})Y$$
while twice the remaining terms, except for $\rD_{\ttV}\ttZ$ is
$$\begin{gathered}2Y\ttV^{\ft}\ttZ + YY^{\ft}\ttV Y^{\ft} \ttZ - YY^{\ft}\ttZ Y^{\ft} \ttV + Y\ttZ^{\ft}(\dI_n-YY^{\ft})\ttV - Y\ttV^{\ft}(\dI_n-YY^{\ft})\ttZ \\
  = Y\ttV^{\ft}\ttZ + Y\ttZ^{\ft}\ttV +Y(Y^{\ft}\ttV +\ttV^{\ft}Y)Y^{\ft}\ttZ -Y(Y^{\ft}\ttZ+\ttZ^{\ft}Y)Y^{\ft}\ttV\\
  =Y\ttV^{\ft}\ttZ + Y\ttZ^{\ft}\ttV
\end{gathered}$$
Thus we have proved \cref{eq:levinew}.

Let us prove the curvature expressions. To show $f(t)=g(t/2)$, with $f(t) = f_0 + (1-t)f_1 + (1-t)^2f_2$ where $f_1, f_2, f_3$ are constant matrices and $g$ is a matrix-valued quadratic function in $t$, we need to show $f_0 = g(1/2)$, $-2f_1 = g'(1/2)$ and $8f_2 = g''(1/2)$. From left invariance we can take $U = \dI_n$. Thus, we need to compute $\rR^{[0]}, \rR^{[1]}, \rR^{[2]}$ and compare with values and derivatives of $g(\alpha) = \lbb A_R(\alpha), B_R(\alpha), 0\rbb$ with $A_R, B_R$ defined from \cref{eq:cur_ABCA} and (\ref{eq:cur_ABCB}) evaluated at $\alpha=1/2$.

Let $\xi = \omega_1, \eta=\omega_2, \phi=\omega_3$ with $\omega_i = \lbb A_i, B_i, 0\rbb$ we have $[\omega_{2\fra}, \omega_{3\frb}]$ is $\lbb 0, -B_3A_2, 0\rbb$, $[\omega_{1\fra}, [\omega_{2\fra}, \omega_{3\frb}]] = \lbb 0, B_3A_2A_1, 0\rbb$ and permuting the indices
$$4\rR^{[2]}_{\omega_1, \omega_2}\omega_3 = \lbb 0, -B_3A_2A_1 - B_2A_3A_1+ B_3A_1A_2 + B_1A_3A_2, 0\rbb$$
On the other hand, \cref{eq:cur_ABCA} and \ref{eq:cur_ABCB} gives $A_{R, \alpha=1/2}''=0$ and $B_{R, \alpha=1/2}''$ is
   $$   B_{R,\alpha=1/2}'' =
\frac{4}{2} ( B_{1} A_{3} A_{2} -  B_{2} A_{3} A_{1}) +\\ (2) (B_{3} A_{1} A_{2} -  B_{3} A_{2} A_{1})
   $$
which confirms $8\rR^{[2]}_{\omega_1, \omega_2}\omega_3 = g''(1/2)$. Next,
        $$[[\omega_1, \omega_2]_{\fra},\omega_{3\frb}] = \lbb 0, - B_3(([A_1, A_2]+B_2^{\ft}B_1 -B_1^{\ft}B_2), 0\rbb$$
        $$[\omega_{3\fra}, [\omega_1, \omega_2]_{\frb}]_{\fra} = \lbb 0, -(B_1A_2 - B_2A_1)A_3, 0\rbb$$
        $$[\omega_1, [\omega_{2\fra}, \omega_{3\frb}]]_{\frm} = [\lbb A_1, B_1, 0\rbb, \lbb 0, -B_3A_2, 0\rbb]_{\frm} = \lbb A_2B_3^{\ft}B_1  + B_1^{\ft}B_3A_2,   B_3A_2A_1, 0\rbb$$
By permuting indices, we evaluate the $\fra$ component of $4\rR^{[1]}_{\omega_1, \omega_2}\omega_3$ from four expressions similar to $[\omega_1, [\omega_{2\fra}, \omega_{3\frb}]]_{\fra}$ as
$$\begin{gathered} -A_2B_3^{\ft}B_1  - B_1^{\ft}B_3A_2
  -A_3B_2^{\ft}B_1  - B_1^{\ft}B_2A_3\\
  +A_1B_3^{\ft}B_2  + B_2^{\ft}B_3A_1
  +A_1B_2^{\ft}B_3  + B_3^{\ft}B_2A_1
\end{gathered}
$$
and evaluate the $\frb$ component of $4\rR^{[1]}_{\omega_1, \omega_2}\omega_3$ from the remaining items as
$$\begin{gathered}
        2(- B_3([A_1, A_2]+B_2^{\ft}B_1 -B_1^{\ft}B_2)
        -(B_1A_2 - B_2A_1)A_3)\\        
        -B_3A_2A_1 - B_2A_3A_1
        +(B_2A_3 - B_3A_2)A_1
        +B_1([A_2, A_3]+B_3^{\ft}B_2 -B_2^{\ft}B_3)\\
        +B_3A_1A_2 + B_1A_3A_2
        - (B_1A_3 - B_3A_1)A_2
         -B_2([A_1, A_3]+B_3^{\ft}B_1 -B_1^{\ft}B_3)
\end{gathered}$$
Let us collect terms. Terms starting with $B_3$ and two $A$ factors are
$$- B_3[A_1, A_2] -B_3A_2A_1 -B_3A_2A_1 +B_3A_1A_2 + B_3A_1A_2 = 0$$
Terms starting with $B_2$ and two $A$ factors:
$$2B_2A_1A_3 - B_2A_3A_1+B_2A_3A_1-B_2[A_1, A_3] = B_2A_1A_3 +B_2A_3A_1$$
Terms starting with $B_1$ and two $A$ factors:
$$-2B_1A_2A_3+B_1[A_2, A_3]   + B_1A_3A_2 - B_1A_3A_2 = -B_1A_2A_3 -B_1A_3A_2$$
Terms with $B$'s only factors
$$\begin{gathered}        - 2B_3(B_2^{\ft}B_1 -B_1^{\ft}B_2)
        +B_1(B_3^{\ft}B_2 -B_2^{\ft}B_3) -B_2(B_3^{\ft}B_1 -B_1^{\ft}B_3)
\end{gathered}$$
On the other hand, we have
$$\begin{gathered}A_{R, \alpha=1/2}' =   \frac{-2}{4} (A_{1} B_{3}^{\ft} B_{2} -  A_{2} B_{3}^{\ft} B_{1}  -
   B_{1}^{\ft} B_{3} A_{2}  +  B_{2}^{\ft} B_{3} A_{1}) +\\
   \frac{-1}{2}(A_{3} B_{1}^{\ft} B_{2} - A_{3} B_{2}^{\ft} B_{1} -  B_{1}^{\ft} B_{2}A_{3}+ B_{2}^{\ft} B_{1} A_{3})
\end{gathered}$$
   $$\begin{gathered}B_{R, \alpha=1/2}' = 
     \frac{4(1/2)-1}{2} (B_{1} A_{3} A_{2} -  B_{2} A_{3} A_{1}) +\\ (2(1/2)-1) (B_{3} A_{1} A_{2} -  B_{3} A_{2} A_{1}) - (B_{3} B_{1}^{\ft} B_{2} - B_{3} B_{2}^{\ft} B_{1}) +\\ \frac{1}{2} (B_{1} B_{2}^{\ft} B_{3} -B_{2} B_{1}^{\ft} B_{3})
     +      \frac{1}{2}(B_{1} A_{2} A_{3}  - B_{1} B_{3}^{\ft} B_{2} - B_{2} A_{1} A_{3} +  B_{2} B_{3}^{\ft} B_{1})
\end{gathered}$$
and we can confirm by inspection $-2\rR^{[1]}_{\omega_1, \omega_2}\omega_3 = g'(1/2)$. The constant term $\rR^{[0]}$ is verified similarly, which we will not show here.
\end{proof}
\begin{remark}We have shown the metric in \cref{sec:stiefel} is $\rP_t$ for $t=\frac{\alpha}{2}$. The submersion associated with the Cheeger deformation gives a sectional curvature formula for $\ttG$ with the metric $\rP_t$ in proposition 2.4 of \cite{GZ2000}. Using the O'Neil equation and \cref{eq:oneil_P}, it implies the following sectional curvature formula for $\ttM=\ttG/\ttK$ (the norm $\|\|$ corresponds to the bi-invariant inner product $\langle\rangle$)
  \begin{equation}  \begin{gathered}
\langle\rR^{\ttM}_{\omega_1, \omega_2}\omega_1, \rP_t\omega_2  \rangle =
\frac{1}{4}\|[\omega_{1\frn}, \omega_{2\frn}]_{\frn} + t [\omega_{1\fra}, \omega_{2\frn}] + t[\omega_{1\frn},\omega_{2\fra}]\|^2 +\\
\frac{1}{4}\|[\omega_{1\frn}, \omega_{2\frn}]_{\fra} + t^2[\omega_{1\fra},\omega_{2\fra}]\|^2 
+\frac{1}{4}t(1-t)^3\|[\omega_{1\fra}, \omega_{2\fra}]\|^2 +\\
\frac{3}{4}(1-t)\|[\omega_{1\frn},\omega_{2\frn}]_{\fra} + t[\omega_{1\fra}, \omega_{2\fra}]\|^2 + \frac{3}{4}\|[\omega_1, \omega_2]_{\frk}\|^2
\end{gathered}  
\end{equation}
It is also a weighted sum of squares in a different format from \cref{eq:sec_sum_sq}. It is shown to imply both the non-negativity of curvature when $t\leq 1$ and in the case $\fra$ is abelian, when $t\leq 4/3$. 
\end{remark}
\section{Discussion} In this paper, we have obtained explicit formulas for curvatures of real Stiefel manifolds with deformation metrics and obtained several results related to Einstein metrics and sectional curvature range, including parameter values corresponding to non-negative sectional curvatures. We expect similar results could be obtained for complex and quaternionic Stiefel manifolds. We hope the availability of explicit curvature formulas for a family of metrics on an important class of manifolds will be helpful in both theory and applications. The framework to compute the Levi-Civita connection and curvature for deformations of normal homogeneous spaces could be applied to other families of manifolds, potentially allowing the construction of new Einstein manifolds or manifolds with non-negative curvatures.
\appendix
\section{A few trace formulas} We collect a few results on the trace of common operators that will be useful in the computation of the Ricci curvature for matrix spaces. They are most likely known, but we do not have the exact references.
\begin{lemma} \label{lem:mat_traces}1. Let $X$ be a matrix in $\R^{m\times n}$. The trace of the operator $X\mapsto AXB$ where $A\in\R^{m\times m}$ and $B\in\R^{n\times n}$ is $\Tr(A)\Tr(B)$. In particular, the trace of $X\mapsto AX$ is $n\Tr(A)$, the trace of $X\mapsto XB$ is $m\Tr(B)$. The trace of the operator $X\mapsto AX^{\ft}B$ where $A$ and $B$ are matrices of size $m\times n$ is $\Tr(AB^{\ft})$.\hfill\break
2. The trace of the operator $X\mapsto AXB +B^{\ft}XA^{\ft}$ from the space $\Herm{p}$ to itself is $\Tr(A)\Tr(B)+\Tr(AB^{\ft})$. In particular, the trace of the operator $X\mapsto AX +XA^{\ft}$ is $(p+1)\Tr(A)$. The trace of the operator $X \mapsto \Tr(AX)B$, with $B$ is a symmetric matrix and $A$ is a $p\times p$ matrix is $\Tr(\frac{1}{2}(A+A^{\ft})B)$.\hfill\break
3. The trace of the operator $X\mapsto AXB + B^{\ft}XA^{\ft}$, from the space $\oo(p)$ to itself, where $A$ and $B$ are $p\times p$ matrices, is $\Tr(A)\Tr(B)-\Tr(AB^{\ft})$. In particular, if $A$ and $B$ are antisymmetric matrices then the trace of $X\mapsto[[AX]B]$ is $(2-p)\Tr(AB)$.
\end{lemma}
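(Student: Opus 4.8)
The plan is to reduce everything to part~1, which I would establish by a direct computation in the orthonormal basis of matrix units. Writing $E_{ij}$ for the matrix unit of $\R^{m\times n}$ with a single $1$ in position $(i,j)$, these are orthonormal for the Frobenius inner product, so the trace of $X\mapsto AXB$ equals $\sum_{i,j}\Tr(E_{ij}^{\ft}(AE_{ij}B))$. An index computation gives $E_{ij}^{\ft}AE_{ij}=A_{ii}E_{jj}$, the latter now a matrix unit of $\R^{n\times n}$, so the $(i,j)$ summand is $A_{ii}B_{jj}$ and the double sum factors as $(\sum_i A_{ii})(\sum_j B_{jj})=\Tr(A)\Tr(B)$; taking $B=\dI_n$ and $A=\dI_m$ yields the two ``in particular'' special cases. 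For $X\mapsto AX^{\ft}B$ with $A,B$ of size $m\times n$, the analogous computation gives $E_{ij}^{\ft}AE_{ij}^{\ft}=A_{ij}E_{ji}$, hence the summand is $A_{ij}B_{ij}$ and the sum is $\sum_{i,j}A_{ij}B_{ij}=\Tr(AB^{\ft})$.

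For parts~2 and~3 the idea is an invariance remark combined with one elementary lemma. Since $X$ symmetric forces $(AXB)^{\ft}=B^{\ft}XA^{\ft}$, the operator $T(X):=AXB+B^{\ft}XA^{\ft}$ equals $AXB+(AXB)^{\ft}$ on $\Herm{p}$ and therefore maps $\Herm{p}$ into itself; for $X$ antisymmetric the same algebra shows $T$ maps $\oo(p)$ into itself. I would then invoke the standard fact that if a linear operator $S$ on a Euclidean space $V$ preserves a subspace $W$ with orthogonal projection $P_W$, then $\Tr_W(S|_W)=\Tr_V(P_W\circ S)$ --- immediate from extending an orthonormal basis of $W$ to one of $V$ and noting the $W^{\perp}$ diagonal entries of $P_W\circ S$ vanish. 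Applying this with $V=\R^{p\times p}$, $S=T$, $W=\Herm{p}$, and $P_W(Y)=\frac12(Y+Y^{\ft})$, the operator $P_W\circ T$ is $X\mapsto\frac12(AXB+B^{\ft}XA^{\ft}+B^{\ft}X^{\ft}A^{\ft}+AX^{\ft}B)$, a sum of four operators handled by part~1 with traces $\Tr(A)\Tr(B)$, $\Tr(A)\Tr(B)$, $\Tr(AB^{\ft})$, $\Tr(AB^{\ft})$; averaging gives $\Tr(A)\Tr(B)+\Tr(AB^{\ft})$. For part~3 one instead uses $P_W(Y)=\frac12(Y-Y^{\ft})$, which flips the signs of the two transposed terms and produces $\Tr(A)\Tr(B)-\Tr(AB^{\ft})$.

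The remaining special statements then drop out. The operator $X\mapsto AX+XA^{\ft}$ is $T$ with $B=\dI_p$, giving $p\Tr(A)+\Tr(A)=(p+1)\Tr(A)$. The rank-one operator $X\mapsto\Tr(AX)B$ on $\Herm{p}$ has trace $\langle W,B\rangle_F$, where $W\in\Herm{p}$ is the Riesz representative of the functional $X\mapsto\Tr(AX)$ on $\Herm{p}$, namely $W=\frac12(A+A^{\ft})$, so the trace is $\Tr(\frac12(A+A^{\ft})B)$. For antisymmetric $A,B$ one expands $[[A,X],B]=AXB+BXA-XAB-BAX$; the first two terms are the part~3 operator (trace $\Tr(AB)$, using $\Tr A=\Tr B=0$ and $B^{\ft}=-B$), while the last two combine into $X\mapsto XD+D^{\ft}X$ with $D=AB$, an operator that preserves $\oo(p)$ and has trace $(p-1)\Tr(AB)$ by the same projection argument, for a net $(2-p)\Tr(AB)$. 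I expect the only delicate point to be the index bookkeeping in part~1, where the sizes of $A$, $B$, and of the relevant matrix units differ between the $AXB$ and $AX^{\ft}B$ cases; the rest is a routine assembly from part~1 and the projection-trace identity.
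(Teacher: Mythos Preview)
Your argument is correct. Part~1 is handled exactly as in the paper (matrix units, Frobenius inner product). For parts~2 and~3 you take a genuinely different route: the paper works directly in the bases $\{E_{ii}\}\cup\{E_{ij}+E_{ji}\}_{i<j}$ of $\Herm{p}$ and $\{E_{ij}-E_{ji}\}_{i<j}$ of $\oo(p)$, reading off the diagonal coefficients by hand and summing; you instead invoke the projection identity $\Tr_W(S|_W)=\Tr_V(P_W\circ S)$ for an invariant subspace $W$, which reduces everything to four applications of part~1 on the full space $\R^{p\times p}$. Your approach is cleaner and avoids the index bookkeeping; the paper's is more elementary but more laborious. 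The rank-one operator is likewise handled via Riesz representation rather than basis coefficients, and the double bracket is treated essentially the same way in both proofs (expand, then apply part~3 twice). One small slip in exposition: your ``last two'' terms are $-XAB-BAX=-(XD+D^{\ft}X)$, so a minus sign is implicit; your stated net of $(2-p)\Tr(AB)$ confirms you are in fact subtracting $(p-1)\Tr(AB)$.
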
  
\begin{proof} Let $E_{ij}$ be the matrix with the $ij$-entry equal to $1$, and other entries equal to 0 and of the same size as $X$. All the statements are proved similarly, by summing the coefficients of the operators on an appropriate base based on $E_{ij}$. Let entries of $A$ be $a_{ij}$ and entries of $B$ be $b_{ij}$.
  
  For item 1, $(AE_{ij}B)_{ij} = a_{ii}b_{jj}$, so the trace of $X\mapsto AXB$ is $\sum_{ij}a_{ii}b_{jj} = \Tr(A)\Tr(B)$. Since $(AE_{ij}^{\ft}B)_{ij} = a_{ij}b_{ij}$, $\Tr(X\mapsto AX^{\ft}B)$ is $\sum_{ij}a_{ij}b_{ij} = \Tr(AB^{\ft})$.

For item 2, a basis of $\Herm{p}$ consists of matrices $E_{ii}$ $(i=1,\cdots, p)$ and $E_{ij} + E_{ji}$ for $i<j$. We now compute the trace of $X\mapsto AXB +B^{\ft}XA^{\ft}$ with respect to this basis. For $E_{ii}$, $(AE_{ii}B +B^{\ft}E_{ii}A^{\ft})_{ii} = 2a_{ii}b_{ii}$, for $E_{ij} + E_{ji}$, the coefficient is
  $$(A(E_{ij}+ E_{ji})B +B^{\ft}(E_{ij}+ E_{ji})A^{\ft})_{ij}= a_{ii}b_{jj} + a_{ij}b_{ji} +b_{ii}a_{jj} + b_{ij}a_{ij}$$
  Hence the trace is
  $$\sum_i2a_{ii}b_{ii} + \sum_{i<j}(a_{ii}b_{jj} + a_{ij}b_{ij} +b_{ii}a_{jj} + b_{ij}a_{ij})= \sum_ia_{ii}\sum_jb_{jj}+\sum_{ij}a_{ij}b_{ij}$$
which is $\Tr(A)\Tr(B) + \Tr(AB^{\ft})$, as $\sum_ia_{ii}b_{ii} + \sum_{i<j}(a_{ii}b_{jj} + b_{ii}a_{jj})$ rearranges to the first sum, and the sum of remaining terms is $\Tr(AB^{\ft})$.\hfill\break
With $B=\dI_p$ we have the trace of $X\mapsto AX +XA^{\ft}$ is $(p+1)\Tr(A)$.\hfill\break
For the operator $X\mapsto \Tr(AX)B$, the coefficient corresponding to $E_{ii}$ is $a_{ii}b_{ii}$, corresponding to $E_{ij} + E_{ji}$ is $(a_{ij} + a_{ji})b_{ij}$. The trace is
$$\sum_{i} a_{ii}b_{ii} + \sum_{i< j} (a_{ij} + a_{ji})b_{ij} = \frac{1}{2}\Tr((A+A^{\ft})B)$$
  
For item 3, a basis of $\oo(p)$ consist of matrices $E_{ij} - E_{ji}$ for $i<j$. The coefficient corresponds to $E_{ij} - E_{ji}$ is
$$(A(E_{ij} - E_{ji})B +B^{\ft}(E_{ij} - E_{ji})A^{\ft})_{ij}= a_{ii}b_{jj} - a_{ij}b_{ij} +b_{ii}a_{jj} - b_{ji}a_{ji}$$
The trace is $\sum_{i<j}a_{ii}b_{jj} - a_{ij}b_{ij} +b_{ii}a_{jj} - b_{ji}a_{ji} =\sum_{ij}a_{ii}b_{jj} - \sum_{ij}a_{ij}b_{ij}$, which is $\Tr(A)\Tr(B) - \Tr(AB)$.

For the trace of $X\mapsto [[AX]B] = (AX-XA)B - B(AX-XA) = AXB +BXA -BAX-XAB$, we have:
  $$\Tr(X\mapsto AXB + BXA) = -\Tr(AB^{\ft}) = \Tr(AB)$$
  $$\Tr(X\mapsto BAX+XAB) = \Tr(\dI_p)\Tr(BA) - \Tr(BA) =(p-1)\Tr(BA)$$
  from here we get $\Tr(X\mapsto [[AX]B]) = (2-p)\Tr(AB)$.
\end{proof}
\bibliographystyle{amsplain}
\bibliography{RiemannianCurvature}
\end{document}